\newtheorem{theorem}{Theorem}[section]
\newtheorem{lemma}{Lemma}
\newtheorem{definition}{Definition}
\newtheorem{remark}{Remark}
\def\la{\left\langle}
\def\ra{\right\rangle}
\def\R{\mathbb R}
\def\E{\mathbb E}
\newcounter{bean}
\newcommand{\benuma}{\setlength{\labelwidth}{.25in}
	
	\begin{list}
		{(\alph{bean})}{\usecounter{bean}}}
\newcommand{\eenuma}{\end{list}}
\begin{document}
\title[ ]{Maximum principle for recursive optimal control problem of stochastic delay evolution equations}
\author[G. Liu]{ Guomin Liu}
\address{School of Mathematical Sciences, Nankai University, Tianjin 300071,
China}
\email{gmliu@nankai.edu.cn}
\author[J. Song]{Jian Song}
\address{Research Center for Mathematics and Interdisciplinary Sciences,
Shandong University, Qingdao 266237, China}
\email{txjsong@sdu.edu.cn}

\author[M. Wang]{Meng Wang}
\address{School of Mathematics, Shandong University, Jinan 250100, Shandong,
China}
\email{wangmeng22@mail.sdu.edu.cn}
\date{}
\maketitle

\begin{abstract}
For a class of stochastic delay evolution equations driven by cylindrical $Q$-Wiener process, we study the Pontryagin's maximum principle for the stochastic recursive
optimal control problem. The delays 
are given as moving averages with
respect to  general finite measures and  appear  in all the coefficients of the control system. In particular, the final cost can contain the state delay. To derive the main result, we introduce
a new form of anticipated backward stochastic evolution equations with terminals acting on an interval as the adjoint equations of the delayed state equations  and
deploy a proper dual analysis between  them. Under certain convex assumption on the coefficient function and the Hamiltonian, we also
show sufficiency of the maximum principle.

\medskip\noindent\textbf{Keywords. } Stochastic delay evolution equation,
anticipated backward stochastic evolution equation, recursive optimal
control, stochastic maximum principle. \smallskip

\noindent\textbf{AMS 2020 Subject Classifications.} 93E20, 60H15, 60H30.
\end{abstract}

\tableofcontents

\vspace{-10pt}

\section{Introduction}

In this paper, we investigate a stochastic recursive optimal control problem
of stochastic delayed evolution equation (SDEE for short)  evolving in a Hilbert space $H$:
\begin{equation}
\left\{ \begin{aligned} dx(t)= &\,\, \left[A(t)x(t)+b
\left(t,x(t),\int_{-\delta
}^{0}x(t+s)m(ds),u(t),\int_{-\delta }^{0}u(t+s)m(ds)\right)\right]dt \\ &
+\left[B(t)x(t)+\sigma \left(t,x(t),\int_{-\delta }^{0}x(t+s)m(ds),u(t),\int_{-\delta
}^{0}u(t+s)m(ds)\right)\right]dw(t),\, t\in(0,T], \\ x(t)= &\,\, x_{0}(t),\,\,u(t)=u_0(t),\quad t\in
\lbrack -\delta ,0], \end{aligned}\right.  \label{EQ-1}
\end{equation}%
with the cost functional $J(u(\cdot )):=y(0)$, where $(y,z)$ solves the following anticipated backward
stochastic differential equation (ABSDE for short) 
\begin{equation}
\left\{ \begin{aligned} -dy(t)= &\,\, \mathbb{E}^{\mathcal{F}%
		_{t}}\Bigg[f\Bigg(t,x(t),\int_{-\delta
}^{0}x(t+s)m(ds),y(t),\int_{-\delta }^{0}y(t-s)m(ds),z(t),\int_{-\delta
}^{0}z(t-s)m(ds),\\ &\ \ \ \qquad \qquad  u(t),\int_{-\delta }^{0}u(t+s)m(ds)\Bigg)\Bigg]dt
-z(t)dw(t), \, t\in[0,T), \\ y(T)= & \,\,h\left(x(T),\int_{-\delta}^0x(T+s)\nu(ds)\right), \ y(t)=0,\ z(t)=0,\ t\in (T,T+\delta ]. \end{aligned}\right.  \label{Myeq2-17}
\end{equation}%
Here, $w(\cdot )$ is a  \emph{cylindrical $Q$-Wiener process} on  some
Hilbert space $K$, $A(t)$ and $B(t)$ are random (unbounded) linear operators, the mappings $b,\sigma ,h,$ and $f$ are coefficient functions taking values in $H$ or $\mathcal L(K;H)$ depending on the context, $u$ is a control process with values in $U$ which is  a convex subset of a Hilbert space $H_1$, and $m,\nu$ are
finite measures on $[-\delta ,0].$  Note that the cylindrical $Q$-Wiener process $w$ can be a classical finite dimensional Wiener process, a standard (i.e.,  trace-class) Hilbert space valued  Wiener process, or a generalized Hilbert space valued  Wiener process (e.g. a cylindrical Wiener process), by choosing the Hilbert space $K$ and the covariance operator $Q$ properly. The significance of ABSDEs allows the models to not only reflect the current available information but also the forecasts of future  changes and trends, offering more accurate and dynamic  optimal decision-making.

In classical optimal control theory, the performance of the
control strategy usually is  characterized by a cost functional (utility function) which
consists of a terminal cost and a running cost. Duffie and Epstein \cite{duffie1992stochastic}
introduced the notion of stochastic differential recursive utility, which was  later extended 
to the form of backward stochastic differential equation (BSDE for short) by  Peng \cite{peng1993backward}, El
Karoui, Peng and Quenez \cite{el1997backward}. An 
optimal control problem with cost functional  described by a BSDE is then called a stochastic recursive optimal control problem.

The Pontryagin's maximum principle is widely recognized as an effective approach in solving optimal control problems (see \cite%
{peng1990general,doi:10.1137/S0363012996313100,WU20131473,Yong2010optimality}
and the references therein). In particular, for the
stochastic recursive optimal control problem concerning finite dimensional systems,  Peng \cite{peng1993backward} 
established a local form of the stochastic maximum principle.
As for infinite dimensional control systems, Benssousssan \cite
{bensoussan2006lectures}, Hu and Peng \cite{hu1990maximum} studied the conventional (non-recursive) utility case; for other developments
on this direction, we refer to \cite%
{du2013maximum,lu2014general,fuhrman2013stochastic,SW21,liu2021maximum, SongWang21}.

 In order to describe past-dependent phenomena which exist widely in reality, delay terms were added  into  control systems. The adjoint equations of delay systems are now known as  anticipated BSDEs (ABSDEs for short), the theory of  which was established by Peng and Yang  \cite{10py}. Since then, a fruitful
literature has been developed on the study of maximum principle for
finite dimensional delay systems. In particular, {\O }ksendal, Sulem and Zhang \cite%
{11osz} studied the system with pointwise delay and moving average delay with
respect to Lebesgue measure in the  state.  Chen and Wu \cite{10cw}
considered the system where the state equation has pointwise delay in both state and
 control. Huang and Shi \cite{HS12} investigated the 
system of a fully coupled forward-backward stochastic differential with
pointwise delay in both state and control. Guatteri and Masiero~\cite{guatteri2021stochastic} studied the case in which  the delays in both state
and control are in the form of a integral with respect to a general finite regular measure. More related literature can be found in \cite%
{FMT10,GM23,20lww,MSWZ23,YU20122420,ZX17}.

In the context of infinite dimensional delay systems with conventional
utilities, {\O }ksendal, Sulem and Zhang \cite{oksendal2012optimal} obtained
the maximum principle for stochastic partial differential equations (SPDEs) with jumps, both  pointwise delay
and moving average delay  in the state being given by an integral with respect to the Lebesgue measure. Meng and Shen~\cite{16ms} established the maximum principle for stochastic
evolution equations (SEEs) with pointwise delay in both  state and
control; Dai, Zhou and Li \cite{DZL21} studied the infinite
horizon case under this setting. Guatteri, Masiero and Orrieri \cite{guatteri2017stochastic}
obtained the maximum principle for a class of SPDEs where the delay is an
integral with respect to a finite regular measure and the final cost also contains a state delay term. To the best of our knowledge, this is the first and only work involving the dependence on the past trajectory in the final cost in the infinite dimensional case. To handle  this new delay term, they introduced a type of  anticipated backward
stochastic evolution equations (ABSEEs) with a datum acting on an interval.   We remark  that in the state equation of \cite{guatteri2017stochastic}, the drift term does not involve the control delay and the diffusion  is independent of 
state and control (consequently, the  ABSEE is linear and does not contain $q$ in the generator). The ABSEE was studied in \cite{guatteri2017stochastic} by the duality method and 
weak approximation of measures, assuming that the measures  are regular and the datum is continuous.  Note also that  all the above-mentioned  control systems  do not contain unbounded operators in 
diffusion terms.   We also refer to \cite{hu1996maximum,guatteri2021stochastic,MSWZ23}
for related results on finite dimensional delay systems. 
 

The objective of this paper is to study the stochastic maximum principle for
the recursive optimal control problem of the infinite dimensional stochastic system  with delays (see \eqref{EQ-1} and 
\eqref{Myeq2-17}). In our controlled system, the delays (of the
control and the state) and the unbounded operators appear in  both  drift
and diffusion terms, and the final cost term can also depend on the past history of the state. The delays take the form of an integral  with respect to a general
finite measure. Note that we do not require the measures to be regular or the datum to be continuous, both of  which was assumed in \cite{guatteri2017stochastic}.  It is also worth noting that the noise in our setting is quite general, covering both finite dimensional Wiener processes as well as infinite dimensional Wiener processes such as trace-class and cylindrical Wiener processes. 

We conclude our introduction by making some remarks on our result and strategy which is different from the known literature.  In the derivation of the maximum principle, the adjoint equation turns out to be an ABSEE with a terminal datum acting on an interval as shown in~\cite{guatteri2017stochastic}. We propose a new  approach to study the wellposedness of ABSEEs  of a general form (see~\eqref{absee-0}), where  $m$ and $dF$ are finite  (signed) measures without assuming the regularity, the datum $\zeta$  is  measurable, and the generator contains unbounded operators and is nonlinear  in both  $p$ and $q$. Our strategy is as follows. First note that the solution $p$ of \eqref{absee-0} may be discontinuous, since it is possible that $dF$  is a singular  measure. Thus,  we shall employ the version of It\^{o}'s formula for general (discontinuous) semimartingales  given  by \cite{82gk}, in order to carry out calculations when deriving a priori estimates for the difference of the solutions of ABSEEs. Then we apply the  continuation method to obtain the wellposedness result for the case when the datum $\zeta$ is identically zero, and the case when $\zeta$ takes values in $V$ can be solved by the method of solution translation (note that we are not allowed to apply this method to ABSEE \eqref{absee-0} with $H$-valued $\zeta$ due to   the appearance of unbounded operators). Finally, the general wellposedness result for ABSEE with $H$-valued $\zeta$ is obtained by an approximation argument based on the a priori estimates. In the derivation of the stochastic maximum principle, the delay terms make the the dual analysis highly nontrivial. Some estimations and a duality equality regarding the delay
measures play  an important role in the analysis (see Lemmas \ref{Le3-2}, \ref{Le3-3} and \ref{Le3-1}). 

The rest of this paper is organized as follows. We recall some preliminary
results on infinite dimensional stochastic analysis in Section \ref{sec:pre}. In Section
\ref{sec:DSEE}, we present the well-posedness results of delayed SEEs and anticipated
BSEEs. We formulate our recursive optimal control problem and derive the
maximum principle in Section \ref{sec:SMP}. Finally, Section \ref{sec:APP} provides some
applications on linear quadratic (LQ) problem and controlled SPDEs and the Appendix includes a complimentary proof.

\section{Preliminaries}\label{sec:pre}

In this section, we provide some preliminaries on stochastic calculus in  infinite dimensional spaces. We refer to \cite{DZ92,PR07} for more details.

Let $X$ and $Y$ be generic Banach spaces.  We denote by $\mathcal{L}(X,Y)$ the space
of bounded linear operators mapping from $X$ to $Y$, and we write $\mathcal{L}(X)$ for $\mathcal{L}(X,X)$  and denote by $I_{X}$ the
identity operator. If $X$ is a separable Hilbert space with orthonormal basis $\{e_j\}_{j=1}^N$, where   $N\in \mathbb N\cup\{\infty\}$  is a  finite number or infinity depending on the dimension of  $X$. In the rest of this paper, we consider the case $N=\infty$ and the result also holds for the  finite dimensional case. 
We denote by $\mathcal{L}_{2}(X,Y)$ the space of \emph{Hilbert-Schmidt operators} mapping from $X$ to $Y,$ i.e., $\mathcal{L}_{2}(X,Y)$ consists of  $T\in \mathcal{L}(X,Y)$ satisfying 
\begin{equation*}\label{e:norm-L2}
    \Vert T\Vert _{\mathcal{L}_{2}(X,Y)}^{2}:=\sum_{j=1}^{\infty}\Vert Te_{j}\Vert _{Y}^{2}<\infty.
\end{equation*}
If we assume further that $Y$ is a separable Hilbert space, the space $\mathcal{L}_{2}(X,Y)$ of Hilbert-Schmidt operators becomes a separable Hilbert space with the inner product
\begin{equation*}
\left\langle T,G\right\rangle _{\mathcal{L}_{2}(X,Y)}:=\sum_{j=1}^{\infty}\left
\langle Te_{j},Ge_{j}\right\rangle _{Y}.
\end{equation*}

Now we recall some facts on  (generalized) Hilbert space valued Wiener processes in some complete probability space $(\Omega ,\mathcal{F},P)$. Let $K$ be a
separable Hilbert space with an orthonormal basis $\{e_{j}\}_{j=1}^{\infty}$ 
 and $Q\in\mathcal L(K)$ be a nonnegative definite and symmetric (i.e. self-adjoint) operator which will serve as the covariance operator of the Wiener process $w=\{w(t)\}_{t\in[0,T]}$.  
 
 If $Q$  has finite trace, i.e., 
\[\text{tr}\,Q:= \sum_{j=1}^\infty \langle Qe_j, e_j\rangle_K=\sum_{j=1}^\infty \langle Q^{\frac12}e_j, Q^{\frac12}e_j\rangle_K<\infty,\]
where $Q^{\frac12}$ denotes the nonnegative square root of $Q$,  there exists an orthonormal basis  still denoted by $\{e_j\}_{j=1}^\infty$ which diagonalizes $Q$, i.e., \[Qe_j=\lambda_j e_j, ~j\in\mathbb N,\] for $\lambda_j\ge0$ being the eigenvalues.  In this case, the \emph{$Q$-Wiener process} $w$ is a \emph{standard} $K$-valued Wiener process and can be expressed by  
\begin{equation*}
w(t)=\sum_{j=1}^{\infty}\beta ^{j}(t)Q^{\frac12}e_{j}=\sum_{j=1}^{\infty}\beta ^{j}(t)%
\lambda _{j}^{\frac12}e_{j},
\end{equation*}
where $\{\beta ^{j}(t)\}_{j\in\mathbb N}$ is a family of independent one-dimensional standard Brownian
motions.  

If $\text{tr}\,Q=\infty$, the Wiener process  \[ w(t) = \sum_{j=1}^\infty \beta^{j}(t)Q^{\frac12} e_j \]
 is a so-called \emph{cylindrical $Q$-Wiener process}. Note that the series does not converge in $K$ and hence $w$ is a \emph{generalized} $K$-valued process 
which
can be charactrerized by the centered Gaussian family 
\[\Big\{ w_t(\varphi):=\langle w(t), \varphi\rangle_K = \sum_{j=1}^\infty \langle Q^{\frac12} e_j, \varphi \rangle_K \beta^j(t), t\in[0,T], \varphi\in K\Big\} \]
with covariance function
\[\E[w_t(\varphi)w_s(\psi)]=(t\wedge s)\langle Q\varphi, \psi\rangle_K=(t\wedge s)\langle Q^{\frac12}\varphi, Q^{\frac12}\psi\rangle_K.\]
In particular, $w$ is called \emph{cylindrical Wiener process} when $Q=I_K$ is the identity operator.

 Let $\mathbb{F}=\{\mathcal{F}_{t}\}_{t\geq 0}$
be the filtration generated by the Wiener process $\{w(t)\}_{t\in \lbrack 0,T]}$ and augmented
by the class of all $P$-null sets of $\mathcal{F}$. For $t\in \lbrack
-\delta ,0)$, we define $\mathcal{F}_{t}:=\mathcal{F}_{0}$. Let $E$  denote a generic separable
Hilbert space with norm $\Vert \cdot \Vert _{E}$. We introduce the
following spaces that will be used in the paper.

\begin{itemize}
\item For any $\sigma $-algebra $\mathcal{G}$, $L^{2}(\mathcal{G};E)$ is the set of all $\mathcal{G}$-measurable random
variables $\xi $ taking values in $E$ such that 
\begin{equation*}
\mathbb{E}\left[ \Vert \xi \Vert _{E}^{2}\right] <\infty;
\end{equation*}
\item $L^{2}(0,T;E)$ denotes the set of all $E$-valued deterministic
processes $\phi =\{\phi (t),\ t\in \lbrack 0,T]\}$ such that 
\begin{equation*}
\int_{0}^{T}\Vert \phi (t)\Vert _{E}^{2}dt<\infty ;
\end{equation*}
\item $L_{\mathbb{F}}^{2}(0,T;E)$ denotes the set of all $E$-valued $%
\mathbb{F}$-adapted processes $\phi =\{\phi (t,\omega ),\ (t,\omega )\in
\lbrack 0,T]\times \Omega \}$ such that 
\begin{equation*}
\mathbb{E}\Big[ \int_{0}^{T}\Vert \phi (t)\Vert _{E}^{2}dt\Big] <\infty ;
\end{equation*}
\item $D_{\mathbb{F}}^{2}(0,T;E)$ is the set of all $E$-valued $%
\mathbb{F}$-adapted c\`{a}dl\`{a}g processes $\phi =\{\phi (t,\omega ),\ (t,\omega )\in
\lbrack 0,T]\times \Omega \}$ such that 
\begin{equation*}
\mathbb{E}\Big[ \sup_{0\leq t\leq T}\Vert \phi (t)\Vert _{E}^{2}\Big] <\infty.
\end{equation*}
\item Given a finite-variation process $F$ on $[0,T]$, $L_{\mathbb{F},F}^{2}(0,T;E)$ denotes the set of all $E$-valued progressively measurable processes $\phi$ satisfying
\begin{equation*}
\mathbb{E}\Big[ \int_{0}^{T}\Vert \phi (t)\Vert _{E}^{2}d|F|_v(t)\Big] <\infty,
\end{equation*}
 where $|F|_v$ is the total variation process of $F$. In particular, when $F(t)=t$, $L_{\mathbb{F},F}^{2}(0,T;E)$ coincides with $L_{\mathbb{F}}^{2}(0,T;E).$
\end{itemize}

Let $V$ and $H$ be two real separable Hilbert spaces such that $V$ is
densely embedded in $H$. Identify $H$ with its dual space $H^{\ast }$ and
denote by $V^{\ast }$ the dual space of $V$. Then we have $V\subset
H=H^{\ast }\subset V^{\ast }$. Denote by $\left\langle \cdot ,\cdot
\right\rangle _{H}$ (resp. $\left\langle \cdot ,\cdot \right\rangle _{\ast }$%
) the scalar product in $H$ ( the duality product between $V^{\ast }$ and $V$). We call $%
(V,H,V^{\ast })$ a \emph{Gelfand triple}.

Recall that $K$ is the Hilbert space where the Wiener process $w$ takes values. Then its subspace $K_{0}:=Q^{\frac12}(K)$ is a Hilbert space endowed with the inner product
\begin{equation*}
\left\langle u,v\right\rangle _{0}=\langle Q^{-\frac{1}{2}}u,Q^{-\frac{1}{2}%
}v\rangle _{K},\text{ }u,v\in K_{0}.
\end{equation*}
Denote by $\mathcal{L}_{2}^{0}(K,H):=\mathcal{L}_{2}(K_0,H)=\mathcal{L}_{2}(Q^{\frac{1}{2}
}(K),H)$ of which the norm is given by \[\Vert F\Vert _{\mathcal{L}
_{2}^{0}(K,H)}:=\Vert F\Vert _{\mathcal{L}_{2}(K_0,H)}=\Vert
FQ^{\frac{1}{2}}\Vert _{\mathcal{L}_{2}(K,H)}.\] 
We also write $
\mathcal{L}_{2}^{0}$ for $\mathcal L_2^0(K,H)$  for notation simplicity. For $f\in L_{\mathbb{F}}^{2}(0,T;\mathcal{L}%
_{2}^{0}),$ we define the stochastic integral with respect to $w$ as follows:%
\begin{equation*}
\int_{0}^{T}f(t)dw(t):=\sum_{k=1}^{\infty}
\int_{0}^{T}f(t) Q^{\frac12}e_{k}d\beta ^{k}(t),
\end{equation*}
where the right-hand side is understood as a limit in $L^{2}(\mathcal{F}_{T};H)$. The process $\int_{0}^{t}f(s)dw(s)$ is an $H$-valued continuous martingale with
\begin{equation*}
\mathbb{E}\left[\left\Vert \int_{0}^{t}f(s)dw(s)\right\Vert _{H}^{2}\right]=\mathbb{E}\Big[%
\int_{0}^{t}\Vert f(s)\Vert _{\mathcal{L}_{2}^{0}(K,H)}^{2}ds\Big]=\mathbb{E}%
\Big[\int_{0}^{t}\Vert f(s)Q^{\frac12}\Vert _{\mathcal{L}_{2}(K,H)}^{2}ds\Big].
\end{equation*}%
The following Burkholder-Davis-Gundy inequality holds:
\begin{equation*}
\mathbb{E}\left[\sup_{0\leq t\leq T}\left\Vert \int_{0}^{t}f(s)dw(s)\right\Vert _{H}^{2}
\right]\leq C\mathbb{E}\Big[\int_{0}^{T}\Vert f(t)\Vert _{\mathcal{L}%
_{2}^{0}(K,H)}^{2}dt\Big]=C\mathbb{E}\Big[\int_{0}^{T}\Vert f(t)Q^{\frac12}\Vert _{%
\mathcal{L}_{2}(K,H)}^{2}dt\Big].
\end{equation*}

 Moreover, given $f \in L^2_{\mathbb F}(0,T;\mathcal L_2^0)$ and $g \in L^2_{\mathbb F}(0,T;H)$, the real-valued martingale $\int_{0}^{t} \langle f(s) dw(s), g(s)\rangle_H$ is defined by 
\[ M_t: = \int_{0}^{t} \langle f(s) dw(s), g(s)\rangle_H=\sum_{k=1}^{\infty}
\int_{0}^{t}\langle f(s) Q^{\frac12}e_{k}, g(s)\rangle_H d\beta ^{k}(s),
\]
of which the quadratic variation is 
\begin{equation}\label{e:QV}
\begin{aligned}
 &       \langle M \rangle_t = \sum_{k=1}^\infty  \int_{0}^{t}\langle f(s) Q^{\frac12}e_{k}, g(s)\rangle_H^2 ds \\& \le \int_{0}^{t} \sum_{k=1}^\infty  
 \|f(s) Q^{\frac12}e_{k}\|_H^2\| g(s)\|_H^2 ds =\int_0^t \|f(s)\|^2_{\mathcal L_2^0} \|g(s)\|_H^2ds.
 \end{aligned}
\end{equation}

Consider three processes $\{v(t,\omega ),\ (t,\omega )\in \lbrack 0,T]\times
\Omega \}$, $\{M(t,\omega ),\ (t,\omega )\in \lbrack 0,T]\times \Omega \}$, and $%
\{v^{\ast }(t,\omega ),\ (t,\omega )\in \lbrack 0,T]\times \Omega \}$ with
values in $V$, $H$ and $V^{\ast }$, respectively. Let $v(t,\omega )$ be
measurable with respect to $(t,\omega )$ and be $\mathcal{F}_{t}$-measurable
with respect to $\omega $ for  $t\in \lbrack 0,T]$, the quantity $\left\langle v^{\ast }(t,\omega ),\eta \right\rangle _{\ast
} $ be measurable with respect to $(t,\omega )$ and $\mathcal{F}_{t}$%
-measurable with respect to $\omega $ for $t\in \lbrack 0,T]$ for any $\eta \in V$, $M(t,\omega )$ be  a continuous
local martingale. Let $\left\langle M\right\rangle $ denote
the increasing process part for $\Vert M\Vert _{H}^{2}$ in the Doob-Meyer
decomposition. Suppose $F(t)$ is a real-valued adapted  c\`{a}dl\`{a}g finite-variation process on $[0,T]$ and $\zeta(t)$ is an $H$-valued adapted process on $[0,T]$, and $\zeta\in L_{\mathbb{F},F}^{2}(0,T;H)$, $v^{\ast }\in L_{\mathbb{F}}^{2}(0,T;V^{\ast })$, $v\in L_{\mathbb{F}}^{2}(0,T;V)$.

The following It\^o's formula is borrowed from \cite{82gk}.

\begin{lemma}
\label{Ito-lemma} Suppose that for
each $\varphi \in V$, it holds  \begin{equation*}
\left\langle v(t),\varphi \right\rangle _{H}=\int_{0}^{t}\left\langle v^{\ast }(s),\varphi
\right\rangle _{\ast }ds+\int_{(0,t]}\left\langle \zeta(s),\varphi
\right\rangle dF(s)+\left\langle M(t),\varphi \right\rangle _{H},
\end{equation*}
for $dt\times dP$-almost all $(t,\omega )\in \lbrack 0,T]\times
\Omega $. Then there exists an adapted c\`{a}dl\`{a}g $H$-valued process $h(\cdot )$ such that

\begin{itemize}
\item[(i)] 
for $dt\times dP$-almost all $(t,\omega )\in \lbrack 0,T]\times \Omega $, $h(t, \omega)=v(t,\omega)$;

\item[(ii)] for $t\in \lbrack 0,T]$, it holds almost surely,
\begin{equation}\label{e:ito-lemma}
    \begin{aligned}
\Vert h(t)\Vert _{H}^{2}=&\Vert h(0)\Vert _{H}^{2}+2\int_{0}^{t}\left\langle
v^{\ast }(s),v(s)\right\rangle _{\ast }ds+2\int_{(0,t]}\left\langle
h(s),\zeta(s)\right\rangle dF(s)\\
&+2\int_{0}^{t}\left\langle
h(s),dM(s)\right\rangle _{H}+\left\langle M\right\rangle (t)-\int_{(0,t]}\Vert \zeta(s)\Vert _{H}^{2}\Delta F(s)dF(s),
\end{aligned}
\end{equation}
 where $\Delta F(s) = F(s) -F(s^-)$. 
\end{itemize}
\end{lemma}
\begin{proof}
When $v(\cdot)$ is a $V$-valued process such  that  for each $\varphi\in V$, it holds for  $dt\times dP$-almost all $(t,\omega)\in[0,T]\times \Omega$, 
\[\left\langle v(t),\varphi \right\rangle _{H}=\int_{0}^{t}\left\langle v^{\ast }(s),\varphi
\right\rangle _{\ast }ds+\left\langle N(t),\varphi \right\rangle _{H},
\]
where $v^*(\cdot)$ is a  $V^*$-valued process and $N(\cdot)$ is an $H$-valued locally square integrable c\`adl\`ag martingale,  the It\^o's formula  was proved in  \cite[Theorem 1]{82gk}.  The desired result can be obtained by same argument used in the proof of \cite[Theorem 1]{82gk}, with the $H$-valued  c\`adl\`ag martingale $N(t)$ being replaced by  the $H$-valued  c\`adl\`ag semi-martingale $\int_{(0,t]} \zeta(s) dF(s) + M(t)$. 
\end{proof}

\section{SDEEs and anticipated BSEEs}\label{sec:DSEE}

\subsection{Stochastic delay evolution equations}
Recall that $(V, H, V^*)$ is a Gelfand triple, and $K$ is a separable Hilbert space where the $Q$-Wiener process $w$ takes values. Also recall the notation $\mathcal L_2^0=\mathcal L_2(Q^{\frac12}(K), H)$.  Given the linear operators 
\begin{equation*}
A:[0,T]\times \Omega \rightarrow \mathcal{L}(V,V^{\ast
}),\quad B:[0,T]\times \Omega \rightarrow \mathcal{L}(V,\mathcal{L}%
_{2}^{0}),
\end{equation*}%
and the nonlinear functions 
\begin{equation*}
b:[0,T]\times \Omega \times H\times H\rightarrow H,\quad \sigma
:[0,T]\times \Omega \times H\times H\rightarrow \mathcal{L}_{2}^{0},
\end{equation*}%
we consider the following stochastic delay evolution equation (SDEE) in $%
(V,H,V^{\ast })$: 
\begin{equation}
\left\{
\begin{aligned}
dx(t)=& \left[A(t)x(t)+b(t,x(t),\int_{-\delta }^{0}x(t+s)m(ds))\right]dt \\
& +\left[B(t)x(t)+\sigma (t,x(t),\int_{-\delta }^{0}x(t+s)m(ds))\right]dw(t),\
\ t\in \lbrack 0,T], \\
x(t)=& x_{0}(t),\,\,t\in \lbrack -\delta ,0],
\end{aligned}%
\right.
\label{sdee-0}
\end{equation}%
where $\delta \in \lbrack 0,T]$ is a time delay parameter, $m(ds)$ is a
finite measure on $[-\delta ,0]$, $x_{0}:[-\delta ,0]\rightarrow
H $ is the initial path, and $w$ is a (cylindrical) $K$-valued  $Q$-Wiener process. For simplicity of presentation, for $t\in \lbrack
0,T] $ we set 
\begin{equation}
\eta _{\delta }(t):=\int_{-\delta }^{0}\eta (t+s)m(ds):=\int_{[-\delta
,0]}\eta (t+s)m(ds),  \label{Myeq3-3}
\end{equation}%
whenever the integration of the function $\eta (t+\cdot ):[-\delta
,0]\rightarrow E$ with respect to the measure $m$ exists (recall that $E$ represents a generic separable Hilbert space). Then 
\begin{equation*}
x_{\delta }(t)=\int_{-\delta }^{0}x(t+s)m(ds).
\end{equation*}%
and (\ref{sdee-0}) can be written as 
\begin{equation}\label{sdee}
\left\{
\begin{aligned}
dx(t)=& \big[A(t)x(t)+b(t,x(t),x_{\delta }(t))\big]dt \\
& +\big[B(t)x(t)+\sigma (t,x(t),x_{\delta }(t))\big]dw(t),\ \ t\in \lbrack
0,T], \\
x(t)=& x_{0}(t),\,\,t\in \lbrack -\delta ,0].
\end{aligned}%
\right.
\end{equation}

\begin{remark}
In general, since $m$ is a generic finite measure, for any two real numbers $a\leq
b,$ the integral with respect to $m$ may be different on intervals such as
$[a,b],(a,b],[a,b)$ and $(a,b)$. In this paper, for notational simplicity,
we will use $\int_{a}^{b}$ as above to denote the integrals over the
closed interval $[a,b]$.
\end{remark}

Throughout the rest of the paper, we denote by $C$ a generic positive constant
which may differ line by line.

\begin{definition}\label{def:SDEE}
A process $x(\cdot )\in L_{\mathbb{F}}^{2}(-\delta ,T;V)$ is called a
solution to \eqref{sdee}, if for $dt\times dP$-almost all $(t,\omega )\in \lbrack -\delta
,T]\times \Omega ,$ it holds in $V^{\ast }$ that:%
\begin{equation*}
\left\{ 
\begin{aligned}
x(t)=&\,\, x_{0}(0)+\int_{0}^{t}A(s)x(s)ds+\int_{0}^{t}b(s,x(s),x_\delta(s))ds
\\
& +\int_{0}^{t}[B(s)x(s)+\sigma (s,x(s),x_{\delta }(s))]dw(s),\,\,t\in
\lbrack 0,T], \\
x(t)=& \,\,x_{0}(t),\,\,t\in \lbrack -\delta ,0].
\end{aligned}%
\right.
\end{equation*}%
or alternatively, for $dt\times dP$-almost all $(t,\omega )\in \lbrack -\delta ,T]\times \Omega $
and all $\varphi \in V$, the following equations hold
\begin{equation}
\left\{ 
\begin{aligned}
\left\langle x(t),\varphi \right\rangle _{H}=&\,\, \left\langle x_{0}(0),\varphi
\right\rangle _{H}+\int_{0}^{t}\left\langle A(s)x(s),\varphi \right\rangle
_{\ast }ds+\int_{0}^{t}\left\langle b(s,x(s),x_{\delta }(s)),\varphi
\right\rangle _{H}ds \\
& +\int_{0}^{t}\langle \lbrack B(s)x(s)+\sigma (s,x(s),
x_{\delta }(s))]dw(s),\varphi \rangle _{H},\,\,t\in \lbrack 0,T], \\
x(t)=&\,\, x_{0}(t),\,\,t\in \lbrack -\delta ,0].
\end{aligned}%
\right.
\end{equation}
\end{definition}

To get existence and uniqueness of the solution to \eqref{sdee},  we impose the following conditions.

\begin{itemize}
\item[(\textbf{A1})] For each $(x,y)\in H\times H,$ $b(\cdot ,\cdot
,x,y)$, $\sigma (\cdot ,\cdot ,x,y)$ are progressively
measurable. $b(\cdot ,\cdot ,0,0)\in L_{\mathbb{F}}^{2}(0,T;H)$, $\sigma
(\cdot ,\cdot ,0,0)\in L_{\mathbb{F}}^{2}(0,T;\mathcal{L}_{2}^{0})$ and $%
x_{0}(\cdot )\in L^{2}([-\delta ,0];V)\cap C([-\delta ,0];H)$, where $%
C([-\delta ,0];H)$ is the space of continuous functions mapping from $[-\delta
,0]$ to $H$.

\item[(\textbf{A2})] For each $x\in V,$ $A(\cdot ,\cdot )x$ and $B(\cdot
,\cdot )x$ are progressively measurable. There exist $\alpha >0$ and $
\lambda \in \mathbb{R}$ such that for almost all $(t,\omega )\in \lbrack 0,T]\times
\Omega $, 
\begin{equation*}
2\left\langle A(t)x,x\right\rangle _{\ast }+\Vert B(t)x\Vert _{\mathcal{L}%
_{2}^{0}}^{2}\leq -\alpha \Vert x\Vert _{V}^{2}+\lambda \Vert x\Vert
_{H}^{2},\,\,\,\text{ for all } x\in V.
\end{equation*}

\item[(\textbf{A3})] There exists a constant $K_{1}>0$ such that for almost all $(t,\omega )\in \lbrack 0,T]\times \Omega $, 
\begin{equation*}
\Vert A(t)x\Vert _{\ast }\leq K_{1}\Vert x\Vert _{V},\,\,\,\text{ for all } x\in V.
\end{equation*}

\item[(\textbf{A4})] There exists a constant $K>0$ such that the following
Lipschitz condition holds: for almost all $(t,\omega )\in \lbrack 0,T]\times \Omega 
$ and all $x,x^{\prime}, y, y'\in H$,
\begin{align*}
& \Vert b(t,x,y)-b(t,x^{\prime },y')\Vert
_{H}^{2}+\Vert \sigma (t,x,y)-\sigma (t,x^{\prime },y')\Vert _{\mathcal{L}_{2}^{0}}^{2} \\
& \leq K(\Vert x-x^{\prime }\Vert _{H}^{2}+\Vert y-y'\Vert _{H}^{2}).
\end{align*}
\end{itemize}

Note that   (A2) and (A3) imply that there exists a constant $C_{1}$ depending on $
\lambda $ and $K$ such that
\begin{equation}
\Vert B(t)x\Vert _{\mathcal{L}_{2}^{0}}\leq C_{1}\Vert x\Vert
_{V},\text{ for all }  x\in V.  \label{Myeq2-41}
\end{equation}

The following lemma will be used frequently later.

\begin{lemma}
\label{Le3-2} Assume $\eta \in L^{2}([-\delta ,t];E)$ and the function $%
\lambda:[-\delta ,t]\rightarrow \mathbb{\R_+}$ is non-increasing. Then we have 
\begin{equation}
\int_{0}^{t}\lambda (s)\Vert \eta _{\delta }(s)\Vert _{E}^{2}ds\leq
m^{2}([-\delta ,0])\int_{-\delta }^{t}\lambda (s)\Vert \eta (s)\Vert
_{E}^{2}ds,  \label{Myeq2-7}
\end{equation}
where $\eta_\delta$ is given in \eqref{Myeq3-3}.
\end{lemma}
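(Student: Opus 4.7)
The plan is to reduce the left-hand side to an integral of $\|\eta(u)\|_E^2$ against $\lambda(u)\,du$ by three successive manipulations: Jensen's inequality on the inner integration against $m$, Fubini's theorem to exchange the order of integration, and a change of variables combined with the monotonicity of $\lambda$.

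First, I would apply Jensen's inequality (equivalently, Cauchy--Schwarz) with respect to the finite measure $m$ to the $H$-valued integrand to obtain
\[
\|\eta_{\delta}(s)\|_E^2 = \Big\|\int_{-\delta}^{0}\eta(s+r)\,m(dr)\Big\|_E^2 \leq m([-\delta,0])\int_{-\delta}^{0}\|\eta(s+r)\|_E^2\,m(dr),
\]
which already extracts one of the two factors of $m([-\delta,0])$ appearing on the right-hand side of \eqref{Myeq2-7}.

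Next, I would multiply by $\lambda(s)$, integrate in $s$ over $[0,t]$, and use Fubini's theorem to bring the $s$-integral inside the $m(dr)$-integral. In the inner integral the substitution $u = s + r$ yields
\[
\int_{0}^{t}\lambda(s)\|\eta(s+r)\|_E^2\,ds = \int_{r}^{t+r}\lambda(u-r)\|\eta(u)\|_E^2\,du.
\]
Since $r\in[-\delta,0]$ we have $u-r \geq u$, so the non-increasing hypothesis gives $\lambda(u-r)\leq \lambda(u)$; multiplying by $\|\eta(u)\|_E^2\geq 0$ preserves this inequality. The shifted interval $[r,t+r]$ is contained in $[-\delta,t]$, so extending the range produces $\int_{-\delta}^{t}\lambda(u)\|\eta(u)\|_E^2\,du$, and the outer $m(dr)$-integration then supplies the second factor of $m([-\delta,0])$, delivering the bound $m^{2}([-\delta,0])$.

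The only subtlety is that both the range-extension and the first Jensen step implicitly require the weight to keep the inequality direction, i.e.\ $\lambda\geq 0$ (or the statement should be read that way). In the applications envisioned here, notably exponential weights of the form $\lambda(s)=e^{-\beta s}$ used in the a priori energy estimates for the ABSEEs of Section~\ref{sec:DSEE}, this is automatic, so I do not anticipate any real obstacle beyond this bookkeeping remark; everything else is a routine Fubini/substitution calculation.
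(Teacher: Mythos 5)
Your argument is correct and is essentially identical to the paper's own proof: Cauchy--Schwarz with respect to the finite measure $m$, Fubini, the substitution $u=s+r$, monotonicity of $\lambda$, and enlargement of the integration range to $[-\delta,t]$. Your remark that the statement implicitly requires $\lambda\geq 0$ (for both the weighted Cauchy--Schwarz step and the range extension) is a fair observation that applies equally to the paper's proof, and is harmless since every application uses $\lambda\equiv 1$ or $\lambda(s)=e^{-\beta s}$.
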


\begin{proof}
By the H\"{o}lder's inequality, we have
\begin{eqnarray*}
\int_{0}^{t}\lambda (s)\Vert \eta _{\delta }(s)\Vert _{E}^{2}ds
&=&\int_{0}^{t}\lambda (s)\left\Vert \int_{-\delta }^{0}\eta (s+r)m(dr)\right\Vert
_{E}^{2}ds \\
&\leq &m([-\delta ,0])\int_{0}^{t}\lambda (s)\int_{-\delta }^{0}\Vert \eta
(s+r)\Vert _{E}^{2}m(dr)ds \\
&=&m([-\delta ,0])\int_{-\delta }^{0}\int_{0}^{t}\lambda (s)\Vert \eta
(s+r)\Vert _{E}^{2}dsm(dr) \\
&=&m([-\delta ,0])\int_{-\delta }^{0}\int_{r}^{t+r}\lambda (s-r)\Vert \eta
(s)\Vert _{E}^{2}dsm(dr).
\end{eqnarray*}%
Then from the non-increasing assumption on $\lambda ,$ we have%
\begin{eqnarray*}
\int_{0}^{t}\lambda (s)\Vert \eta _{\delta }(s)\Vert _{E}^{2}ds &\leq
&m([-\delta ,0])\int_{-\delta }^{0}\int_{r}^{t+r}\lambda (s)\Vert \eta
(s)\Vert _{E}^{2}dsm(dr) \\
&\leq &m([-\delta ,0])\int_{-\delta }^{0}\int_{-\delta }^{t}\lambda (s)\Vert
\eta (s)\Vert _{E}^{2}dsm(dr) \\
&=&m^{2}([-\delta ,0])\int_{-\delta }^{t}\lambda (s)\Vert \eta (s)\Vert
_{E}^{2}ds.
\end{eqnarray*}
\end{proof}

We then have the following a priori estimate on the solution of SDEE.
\begin{theorem}
\label{estimate} Let the assumptions (A1)-(A4) hold. Suppose that $x(\cdot )$
is a solution to SDEE~\eqref{sdee} associated with $(x_{0},b,\sigma )$ in the sense of Definition \ref{def:SDEE}.
Then 
\begin{align}
& \mathbb{E}\Big[\sup\limits_{0\leq t\leq T}\Vert x(t)\Vert _{H}^{2}\Big]+%
\mathbb{E}\int_{0}^{T}\Vert x(t)\Vert _{V}^{2}dt  \notag  \label{es-x} \\
& \leq C\bigg\{\Vert x_{0}(0)\Vert _{H}^{2}+\int_{-\delta }^{0}\Vert
x(t)\Vert _{H}^{2}dt+\mathbb{E}\int_{0}^{T}\left(\Vert b(t,0,0)\Vert
_{H}^{2}+\Vert \sigma (t,0,0)\Vert _{\mathcal{L}_{2}^{0}}^{2}\right)dt\bigg\},
\end{align}%
for some constant $C>0$ depending on $\lambda ,\alpha ,K_{1}$ and $K$.
Moreover, if $x^{\prime }(\cdot )$ is a   solution to 
\eqref{sdee} with $(x_{0}^{\prime },b^{\prime },\sigma ^{\prime })$, we have 
    \begin{align}
& \mathbb{E}\Big[\sup\limits_{0\leq t\leq T}\Vert x(t)-x^{\prime }(t)\Vert
_{H}^{2}\Big]+\mathbb{E}\int_{0}^{T}\Vert x(t)-x^{\prime }(t)\Vert
_{V}^{2}dt\notag\\
&\leq  C\Bigg\{\Vert x_{0}(0)-x_{0}^{\prime }(0)\Vert _{H}^{2}  
 +\int_{-\delta }^{0}\Vert x_{0}(t)-x_{0}^{\prime }(t)\Vert
_{H}^{2}ds+\mathbb{E}\int_{0}^{T}\Vert b(t,x^{\prime }(t),x_{\delta
}^{\prime }(t))-b^{\prime }(t,x^{\prime }(t),x_{\delta }^{\prime }(t))\Vert
_{H}^{2}dt\notag \\
& \qquad\qquad +\mathbb{E}\int_{0}^{T}\Vert \sigma (t,x^{\prime
}(t),x_{\delta }^{\prime }(t))-\sigma ^{\prime }(t,x^{\prime }(t),x_{\delta
}^{\prime }(t))\Vert _{\mathcal{L}_{2}^{0}}^{2}dt\Bigg\}. \label{es-difference}
\end{align}
\end{theorem}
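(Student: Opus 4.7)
The plan is to obtain both inequalities by applying It\^{o}'s formula (Lemma \ref{Ito-lemma}, with $F\equiv 0$) to $\Vert x(t)\Vert_H^2$, combining the coercivity (A2) with the Lipschitz bound (A4), converting the delay term via Lemma \ref{Le3-2}, and closing the argument through Gronwall's inequality and the Burkholder--Davis--Gundy inequality.

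First I would apply It\^{o}'s formula to $\Vert x(t)\Vert_H^2$, which yields
\begin{align*}
\Vert x(t)\Vert_H^2 =\ & \Vert x_0(0)\Vert_H^2+\int_0^t\!\bigl[2\langle A(s)x(s),x(s)\rangle_\ast +\Vert B(s)x(s)+\sigma(s,x(s),x_\delta(s))\Vert_{\mathcal{L}_2^0}^2\bigr]ds\\
& +2\int_0^t\!\langle x(s),b(s,x(s),x_\delta(s))\rangle_H\,ds+2\int_0^t\!\langle x(s),[B(s)x(s)+\sigma(s,\cdot,\cdot)]dw(s)\rangle_H.
\end{align*}
Expanding $\Vert Bx+\sigma\Vert_{\mathcal{L}_2^0}^2=\Vert Bx\Vert_{\mathcal{L}_2^0}^2+2\langle Bx,\sigma\rangle_{\mathcal{L}_2^0}+\Vert\sigma\Vert_{\mathcal{L}_2^0}^2$ and applying Young's inequality to the cross term with a small $\varepsilon>0$, I would use (A2) together with \eqref{Myeq2-41} to absorb the resulting $\varepsilon\Vert Bx\Vert_{\mathcal{L}_2^0}^2\le\varepsilon C_1^2\Vert x\Vert_V^2$ into the $-\alpha\Vert x\Vert_V^2$ term. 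The Lipschitz assumption (A4) then gives $\Vert\sigma(s,x,x_\delta)\Vert_{\mathcal{L}_2^0}^2\le C(\Vert x\Vert_H^2+\Vert x_\delta\Vert_H^2+\Vert\sigma(s,0,0)\Vert_{\mathcal{L}_2^0}^2)$ and similarly for $b$. After taking expectations (the martingale term vanishes in $L^1$), Lemma \ref{Le3-2} with $\lambda\equiv 1$ converts $\int_0^t\Vert x_\delta(s)\Vert_H^2 ds$ into $m^2([-\delta,0])\int_{-\delta}^t\Vert x(s)\Vert_H^2 ds$, so the piece over $[-\delta,0]$ becomes part of the initial data while the piece over $[0,t]$ feeds Gronwall. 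To upgrade from $\mathbb{E}\Vert x(t)\Vert_H^2$ to $\mathbb{E}\sup_{0\le t\le T}\Vert x(t)\Vert_H^2$ I would apply the BDG inequality to the martingale term and split $\Vert x(s)\Vert_H\Vert Bx+\sigma\Vert_{\mathcal{L}_2^0}\le \tfrac{1}{2}\sup_{r\le s}\Vert x(r)\Vert_H^2+C\Vert Bx+\sigma\Vert_{\mathcal{L}_2^0}^2$ in the customary way, absorbing the supremum on the left. Gronwall's inequality then yields \eqref{es-x}.

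For \eqref{es-difference}, the same procedure is applied to $\tilde{x}=x-x'$, which satisfies the SDEE
\[d\tilde{x}(t)=[A(t)\tilde{x}(t)+\Delta b(t)]\,dt+[B(t)\tilde{x}(t)+\Delta\sigma(t)]\,dw(t),\]
with initial path $\tilde{x}_0=x_0-x_0'$, where I decompose
\[\Delta b(t)=[b(t,x,x_\delta)-b(t,x',x_\delta')]+[b(t,x',x_\delta')-b'(t,x',x_\delta')]\]
and similarly for $\Delta\sigma$. The Lipschitz piece is handled exactly as in the a priori bound, while the residual pieces produce precisely the source terms on the right-hand side of \eqref{es-difference}. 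The main technical point is the algebraic bookkeeping in the first step: since (A2) controls $\Vert Bx\Vert_{\mathcal{L}_2^0}^2$ with the sharp coefficient one, the cross term from expanding $\Vert Bx+\sigma\Vert_{\mathcal{L}_2^0}^2$ together with the Lipschitz growth of $\sigma$ must be absorbed delicately (via $\varepsilon$ and \eqref{Myeq2-41}) so that the $-\alpha\Vert x\Vert_V^2$ coercivity survives; the constant $\lambda$ in (A2) is allowed to be positive, so Gronwall must be applied to the $H$-norm rather than to a quantity that is already coercive.
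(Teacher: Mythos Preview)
Your proposal is correct and follows essentially the same approach as the paper: It\^{o}'s formula, expansion of $\Vert B\hat x+\sigma\Vert_{\mathcal{L}_2^0}^2$, absorption of the cross term via \eqref{Myeq2-41} and a Young-type splitting so that the $-\alpha\Vert\cdot\Vert_V^2$ coercivity survives, conversion of the delay term through Lemma~\ref{Le3-2}, Gronwall for the pointwise $H$-bound and $V$-integral, and BDG to upgrade to the supremum. The only organizational difference is that the paper proves \eqref{es-difference} first and obtains \eqref{es-x} as the special case $x_0'=0$, $b'=\sigma'=0$, whereas you run the two arguments in parallel; also, once Gronwall has been applied to obtain the analogue of \eqref{Myeq2-5}, the BDG step closes directly without a second application of Gronwall.
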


\begin{proof}  
We only prove the estimate \eqref{es-difference}, and the estimation \eqref{es-x} follows from \eqref{es-difference} with $x_{0}^{\prime
}(0)=0,$ $b^{\prime }=\sigma ^{\prime }=0$.  To simplify the
notations, we denote 
\begin{equation*}
\hat{x}(t)=x(t)-x^{\prime }(t),\text{ for }t\in \lbrack -\delta ,T].
\end{equation*}%
We first note that, by Lemma \ref{Le3-2},\ for $\tilde{x}=x,x^{\prime },%
\hat{x}$, $\mathbb{E}\int_{0}^{t}\Vert \tilde{x}_{\delta }(s)\Vert _{H}^{2}ds\leq C%
\mathbb{E}\int_{-\delta }^{t}\Vert \tilde{x}(s)\Vert _{H}^{2}ds.$ Thus, noting that $\tilde x\in L_{\mathbb F}^2(0,T;H)$ and by the assumptions on $b$ and $\sigma$, we have $b(\cdot ,\tilde{x}(\cdot ),\tilde{x}_{\delta }(\cdot ))\in L_{\mathbb{F}
}^{2}(0,T;H)$ and $\sigma (\cdot ,\tilde{x}(\cdot ),\tilde{x}_{\delta
}(\cdot ))\in L_{\mathbb{F}}^{2}(0,T;\mathcal{L}_{2}^{0}).$ Applying It\^{o}'s formula \eqref{e:ito-lemma} to $\|\hat x(t)\|^2_H$ on $[0,T]$, we have for $t\in \lbrack 0,T]$ that 
\begin{align*}
& \Vert \hat{x}(t)\Vert _{H}^{2}-\Vert \hat{x}(0)\Vert
_{H}^{2}=2\int_{0}^{t}\left\langle A(s)\hat{x}(s),\hat{x}(s)\right\rangle
_{\ast }ds \\
& \hspace{2em}+2\int_{0}^{t}\left\langle b(s,x(s),x_{\delta }(s))-b^{\prime
}(s,x^{\prime }(s),x_{\delta }^{\prime }(s)),\hat{x}(s)\right\rangle _{H}ds
\\
& \hspace{2em}+2\int_{0}^{t}\left\langle [B(s)\hat{x}(s)+\sigma
(s,x(s),x_{\delta }(s))-\sigma ^{\prime }(s,x^{\prime }(s),x_{\delta
}^{\prime }(s))]dw(s),\hat{x}(s)\right\rangle _{H} \\
& \hspace{2em}+\int_{0}^{t}\Vert B(s)\hat{x}(s)+\sigma (s,x(s),x_{\delta
}(s))-\sigma ^{\prime }(s,x^{\prime }(s),x_{\delta }^{\prime }(s))\Vert _{%
\mathcal{L}_{2}^{0}}^{2}ds.
\end{align*}%
By the assumption (A2), we have 
\begin{align*}
\Vert \hat{x}(t)\Vert _{H}^{2}& \leq \Vert \hat{x}(0)\Vert _{H}^{2}-\alpha
\int_{0}^{t}\Vert \hat{x}(s)\Vert _{V}^{2}ds+(1+\lambda )\int_{0}^{t}\Vert 
\hat{x}(s)\Vert _{H}^{2}ds \\
& \hspace{1em}+\int_{0}^{t}\Vert b(s,x(s),x_{\delta }(s))-b^{\prime
}(s,x^{\prime }(s),x_{\delta }^{\prime }(s))\Vert _{H}^{2}ds \\
& \hspace{1em}+\int_{0}^{t}\Vert \sigma (s,x(s),x_{\delta }(s))-\sigma
^{\prime }(s,x^{\prime }(s),x_{\delta }^{\prime }(s))\Vert _{\mathcal L_2^0}^{2}ds \\
& \hspace{1em}+2\int_{0}^{t}\big<B(s)\hat{x}(s),\sigma (s,x(s),x_{\delta
}(s))-\sigma ^{\prime }(s,x^{\prime }(s),x_{\delta }^{\prime }(s))\big>_{%
\mathcal{L}_{2}^{0}}ds \\
& \hspace{1em}+2\int_{0}^{t}\left\langle [B(s)\hat{x}(s)+\sigma
(s,x(s),x_{\delta }(s))-\sigma ^{\prime }(s,x^{\prime }(s),x_{\delta
}^{\prime }(s))]dw(s),\hat{x}(s)\right\rangle _{H}.
\end{align*}%
Then by applying triangular inequality and the inequality $2ab\le pa^2+b^2/p$ for all $p>0$ together with condition (A4), \eqref{Myeq2-41} and \eqref{Myeq2-7}, we can get 
\begin{equation}
\begin{aligned}
\Vert \hat{x}(t)\Vert _{H}^{2}& \leq \Vert \hat{x}(0)\Vert _{H}^{2}-\alpha
\int_{0}^{t}\Vert \hat{x}(s)\Vert _{V}^{2}ds+2Km^{2}([-\delta ,0])(1+\tfrac{%
2(C_{1})^{2}}{\alpha })\int_{-\delta }^{0}\Vert \hat{x}(s)\Vert _{H}^{2}ds \\
& \hspace{1em}+\Big(1+\lambda +2K(1+m^{2}([-\delta ,0]))(1+\tfrac{%
2(C_{1})^{2}}{\alpha })\Big)\int_{0}^{t}\Vert \hat{x}(s)\Vert _{H}^{2}ds \\
& \hspace{1em}+2\int_{0}^{t}\Vert b(s,x^{\prime }(s),x_{\delta }^{\prime
}(s))-b^{\prime }(s,x^{\prime }(s),x_{\delta }^{\prime }(s))\Vert _{H}^{2}ds
\\
& \hspace{1em}+\Big(2+\frac{4(C_{1})^{2}}{\alpha }\Big)\int_{0}^{t}\Vert \sigma
(s,x^{\prime }(s),x_{\delta }^{\prime }(s))-\sigma ^{\prime }(s,x^{\prime
}(s),x_{\delta }^{\prime }(s))\Vert _{\mathcal{L}_{2}^{0}}^{2}ds+\frac{\alpha }{2}\int_{0}^{t}\Vert \hat{x}(s)\Vert _{V}^{2}ds \\
& \hspace{1em}+2\int_{0}^{t}\left\langle [B(s)\hat{x}(s)+\sigma
(s,x(s),x_{\delta }(s))-\sigma ^{\prime }(s,x^{\prime }(s),x_{\delta
}^{\prime }(s))]dw(s),\hat{x}(s)\right\rangle _{H}.
\end{aligned}
\label{Myeq2-3}
\end{equation}%
Taking expectation on both sides, we have
\begin{align*}
\mathbb{E}[\Vert \hat{x}(t)\Vert _{H}^{2}]+\frac{\alpha }{2}\mathbb{E}
\int_{0}^{t}\Vert \hat{x}(s)\Vert _{V}^{2}ds\leq & \Vert \hat{x}(0)\Vert
_{H}^{2}+C\Bigg\{\int_{-\delta }^{0}\Vert \hat{x}(s)\Vert _{H}^{2}ds+\mathbb{E%
}\int_{0}^{t}\Vert \hat{x}(s)\Vert _{H}^{2}ds \\
& +\mathbb{E}\int_{0}^{t}\Vert b(s,x^{\prime }(s),x_{\delta }^{\prime
}(s))-b^{\prime }(s,x^{\prime }(s),x_{\delta }^{\prime }(s))\Vert _{H}^{2}ds
\\
& +\mathbb{E}\int_{0}^{t}\Vert \sigma (s,x^{\prime }(s),x_{\delta }^{\prime
}(s))-\sigma ^{\prime }(s,x^{\prime }(s),x_{\delta }^{\prime }(s))\Vert _{%
\mathcal{L}_{2}^{0}}^{2}ds\Bigg\}.
\end{align*}
Applying the Gr\"onwall's inequality on $[0,T]$, we obtain 
\begin{equation}
\begin{split}
& \mathbb{E}[\Vert \hat{x}(t)\Vert _{H}^{2}]+\mathbb{E}\int_{0}^{t}\Vert 
\hat{x}(s)\Vert _{V}^{2}ds \\
& \leq C\Bigg\{\Vert \hat{x}(0)\Vert _{H}^{2}+\int_{-\delta }^{0}\Vert \hat{x}%
(s)\Vert _{H}^{2}ds+\mathbb{E}\int_{0}^{t}\Vert b(s,x^{\prime }(s),x_{\delta
}^{\prime }(s))-b^{\prime }(s,x^{\prime }(s),x_{\delta }^{\prime }(s))\Vert
_{H}^{2}ds \\
& \hspace{4em}+\mathbb{E}\int_{0}^{t}\Vert \sigma (s,x^{\prime
}(s),x_{\delta }^{\prime }(s))-\sigma ^{\prime }(s,x^{\prime }(s),x_{\delta
}^{\prime }(s))\Vert _{\mathcal{L}_{2}^{0}}^{2}ds\Bigg\}.
\end{split}
\label{Myeq2-5}
\end{equation}

On the other hand, by \eqref{e:QV} and  Burkholder-Davis-Gundy inequality, we have 
\begin{equation}
\begin{split}
& \mathbb{E}\Big[\sup\limits_{0\leq t\leq T}\int_{0}^{t}\left\langle [B(s)%
\hat{x}(s)+\sigma (s,x(s),x_{\delta }(s))-\sigma ^{\prime }(s,x^{\prime
}(s),x_{\delta }^{\prime }(s))]dw(s),\hat{x}(s)\right\rangle _{H}\Big]
\\
& \leq C\mathbb{E}\left( \int_{0}^{T}\Vert B(t)\hat{x}(t)+\sigma
(t,x(t),x_{\delta }(t))-\sigma ^{\prime }(t,x^{\prime }(t),x_{\delta
}^{\prime }(t))\Vert _{\mathcal{L}_{2}^{0}}^{2}\Vert \hat{x}(t)\Vert
_{H}^{2}dt\right) ^{\frac{1}{2}} \\
& \leq C\mathbb{E}\left( \sup\limits_{0\leq t\leq T}\Vert \hat{x}(t)\Vert
_{H}\Big(\int_{0}^{T}\Vert B(t)\hat{x}(t)+\sigma (t,x(t),x_{\delta
}(t))-\sigma ^{\prime }(t,x^{\prime }(t),x_{\delta }^{\prime }(t))\Vert _{%
\mathcal{L}_{2}^{0}}^{2}dt\Big)^{\frac{1}{2}}\right) \\
& \leq \frac{1}{4}\mathbb{E}\Big[\sup\limits_{0\leq t\leq T}\Vert \hat{x}%
(t)\Vert _{H}^{2}\Big]+C\mathbb{E}\int_{0}^{T}\Vert B(t)\hat{x}(t)+\sigma
(t,x(t),x_{\delta }(t))-\sigma ^{\prime }(t,x^{\prime }(t),x_{\delta
}^{\prime }(t))\Vert _{\mathcal{L}_{2}^{0}}^{2}dt \\
& \leq \frac{1}{4}\mathbb{E}\Big[\sup\limits_{0\leq t\leq T}\Vert \hat{x}%
(t)\Vert _{H}^{2}\Big]+{C}\mathbb{E}\int_{0}^{T}[\Vert B(t)\hat{x}(t)\Vert _{%
\mathcal{L}_{2}^{0}}^{2}+\Vert \sigma (t,x(t),x_{\delta }(t))-\sigma
^{\prime }(t,x^{\prime }(t),x_{\delta }^{\prime }(t))\Vert _{\mathcal{L}%
_{2}^{0}}^{2}]dt.
\end{split}
\label{e:3}
\end{equation}%
By Theorem 3.2 in \cite{79k}, it can be inferred  that $\hat{x}(t)\in C([0,T],H)$ for
almost all $\omega $. Taking supremum over $t\in \lbrack 0,T]$ on both sides
of (\ref{Myeq2-3}), then it follows from (\ref{e:3}), (\ref{Myeq2-41}) and (\ref
{Myeq2-5}) that
\begin{equation}
\begin{split}
& \mathbb{E}\Big[\sup\limits_{0\leq t\leq T}\Vert \hat{x}(t)\Vert _{H}^{2}%
\Big] \\
& \leq C\Big\{\Vert \hat{x}(0)\Vert _{H}^{2}+\int_{-\delta }^{0}\Vert \hat{x}%
(s)\Vert _{H}^{2}ds+\mathbb{E}\int_{0}^{T}\Vert b(s,x^{\prime }(s),x_{\delta
}^{\prime }(s))-b^{\prime }(s,x^{\prime }(s),x_{\delta }^{\prime }(s))\Vert
_{H}^{2}ds \\
& \hspace{4em}+\mathbb{E}\int_{0}^{T}\Vert \sigma (s,x^{\prime
}(s),x_{\delta }^{\prime }(s))-\sigma ^{\prime }(s,x^{\prime }(s),x_{\delta
}^{\prime }(s))\Vert _{\mathcal{L}_{2}^{0}}^{2}ds\Big\}.
\end{split}
\label{Myeq3-6}
\end{equation}
The desired result then follows from (\ref{Myeq2-5}) and (\ref{Myeq3-6}).
\end{proof}

The well-posedness of SDEE \eqref{sdee} is established in the following theorem.
\begin{theorem}
\label{exu} Under conditions (A1)-(A4), equation \eqref{sdee} has a unique
solution $x(\cdot )\in L_{\mathbb{F}}^{2}(-\delta ,T;V)$.
\end{theorem}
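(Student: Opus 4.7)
The plan is to deduce uniqueness directly from the a priori estimate already established in Theorem \ref{estimate}, and to obtain existence through a Banach fixed-point argument built on top of the classical well-posedness theory for non-delayed stochastic evolution equations.

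\textbf{Uniqueness.} If $x, x' \in L_\mathcal{F}^2(-\delta, T; V)$ are two solutions sharing the same data $(x_0, b, \sigma)$, then the difference estimate \eqref{es-difference} applied with $b' = b$, $\sigma' = \sigma$, $x_0' = x_0$ collapses to $\mathbb{E}\int_0^T \|x(t) - x'(t)\|_V^2\,dt = 0$, so $x = x'$.

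\textbf{Existence.} I would work on the complete metric space $X := \{y \in L_\mathcal{F}^2(-\delta, T; H) : y|_{[-\delta, 0]} = x_0\}$, equipped with the weighted norm $\|y\|_\beta^2 := \mathbb{E}\int_0^T e^{-\beta s}\|y(s)\|_H^2\,ds$ for a large parameter $\beta > 0$ to be chosen. Given $y \in X$, Lemma \ref{Le3-2} (with the constant weight $\lambda \equiv 1$) ensures $y_\delta \in L_\mathcal{F}^2(0, T; H)$, so under (A1)--(A4) the frozen-delay SEE
\begin{equation*}
dx(t) = [A(t)x(t) + b(t, x(t), y_\delta(t))]\,dt + [B(t)x(t) + \sigma(t, x(t), y_\delta(t))]\,dw(t), \quad x(0) = x_0(0),
\end{equation*}
falls within the scope of the classical variational well-posedness theorem in the Gelfand triple $(V, H, V^*)$ and admits a unique solution $\Phi(y) \in L_\mathcal{F}^2(0, T; V) \cap D_\mathcal{F}^2(0, T; H)$, which I extend to $[-\delta, 0]$ by $x_0$. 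For $y, y' \in X$, set $\xi := \Phi(y) - \Phi(y')$. Applying It\^o's formula (Lemma \ref{Ito-lemma}) to $e^{-\beta t}\|\xi(t)\|_H^2$, using the coercivity condition (A2), the Lipschitz property (A4), and the bound $\|B\xi\|_{\mathcal{L}_2^0} \le C_1\|\xi\|_V$ from \eqref{Myeq2-41} (with the cross term absorbed via Young's inequality into $(\alpha/2)\|\xi\|_V^2$), I expect to arrive at
\begin{equation*}
(\beta - C)\,\mathbb{E}\int_0^T e^{-\beta s}\|\xi(s)\|_H^2\,ds + \tfrac{\alpha}{2}\,\mathbb{E}\int_0^T e^{-\beta s}\|\xi(s)\|_V^2\,ds \le C\,\mathbb{E}\int_0^T e^{-\beta s}\|y_\delta(s) - y'_\delta(s)\|_H^2\,ds.
\end{equation*}
Then Lemma \ref{Le3-2} applied with the non-increasing weight $\lambda(s) = e^{-\beta s}$, combined with the fact that $y - y' \equiv 0$ on $[-\delta, 0]$, upgrades this to $\|\xi\|_\beta^2 \le \tfrac{C\,m^2([-\delta, 0])}{\beta - C}\,\|y - y'\|_\beta^2$, with a factor strictly less than $1$ for $\beta$ sufficiently large. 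The Banach fixed-point theorem then produces a unique $x = \Phi(x) \in X$, and since $\Phi(X) \subset L_\mathcal{F}^2(-\delta, T; V)$, this fixed point automatically lies in $L_\mathcal{F}^2(-\delta, T; V)$ as required.

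The main obstacle is closing the contraction despite the nonlocal-in-time coupling induced by the delay $y_\delta$: a naive small-$T$ Picard iteration on the interval $[0, T]$ does not close because the Lipschitz constant from (A4) multiplies a term that integrates the past. Lemma \ref{Le3-2} is precisely the tool that removes this difficulty: with the non-increasing weight $e^{-\beta s}$, it translates a weighted $L^2$-norm of $y_\delta$ on $[0, T]$ into a weighted $L^2$-norm of $y$ on $[-\delta, T]$ without introducing any factor $e^{\beta\delta}$, so the $1/\beta$ gain from the exponential weighting survives and yields a genuine contraction. A completely equivalent alternative would be a step-method iteration on short subintervals whose length is dictated by $C\,m^2([-\delta, 0])$ via the uniform constants in Theorem \ref{estimate}.
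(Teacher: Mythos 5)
Your proposal is correct and follows essentially the same route as the paper's proof in the Appendix: freeze the delay term, solve the resulting classical SEE in the Gelfand triple via \cite{79k,LR15}, and close a Banach fixed-point argument with the exponential weight $e^{-\beta t}$, using Lemma \ref{Le3-2} with a non-increasing weight to absorb the delay without incurring a factor $e^{\beta\delta}$. The only cosmetic difference is that the paper contracts in a norm that also carries a $\gamma$-weighted $V$-part on $[0,T]$, whereas you contract in the weighted $H$-norm alone and recover the $V$-regularity afterwards from the mapping properties of $\Phi$; both are valid.
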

\begin{remark}
 Due to the presence of unbounded operator $B$ in the diffusion term in \eqref{sdee},  we cannot apply directly the well-posedness result in \cite{TGR02} as done in \cite{14sms}. Thus, we provide a detailed proof in Appendix \ref{se:Thm3.2} for the existence and uniqueness of solution to  equation \eqref{sdee} by a standard method of employing  the fixed-point theorem. 
\end{remark}

\subsection{Anticipated backward stochastic evolution equations}

In this subsection, we present a well-posedness theory for a type of \emph{anticipated
backward stochastic evolution equations} (ABSEEs). It will be used to
characterize the adjoint processes in the derivation of the maximum
principle.

Let
$\mathcal{M}:[0,T]\times \Omega \rightarrow \mathcal{L}(V,V^{\ast }),\,\,\,%
\mathcal{N}:[0,T]\times \Omega \rightarrow \mathcal{L}(\mathcal{L}%
_{2}^{0},V^{\ast })$
and $g:[0,T]\times \Omega \times V\times \mathcal{L}
_{2}^{0}\times V\times \mathcal{L}_{2}^{0}\to H$ be progressively measurable mappings. Let $F$ be a process on $[0,T]$ with bounded variation. Then we consider the
following ABSEE with a datum acting on an interval (a running terminal)
\begin{equation}\label{absee-0}
\left\{ 
\begin{aligned}
p(t)=&\,\, \xi+\int_{(t,T]}\zeta(s)dF(s)+\int_t^T\Big\{\mathcal{M}(s)p(s)+\mathcal{N}(s)q(s)\\
&+\mathbb{E}^{\mathcal{F}%
_{s}}\big[g(s,p(s),q(s),\int_{-\delta }^{0}p(s-r)m(dr),\int_{-\delta
}^{0}q(s-r)m(dr))\big]\Big\}ds\\
& -\int_t^Tq(s)dw(s),\quad t\in \lbrack 0,T], \\
p(t)=&\,\, 0,\,q(t)=0,\quad t\in (T,T+\delta ],
\end{aligned}%
\right. 
\end{equation}%
where $\xi$ and $\zeta$ are the terminal conditions acting on $T$ and $(0,T]$ respectively, and $m(\cdot )$ is a
finite measure on $[-\delta ,0]$. We remark that in general $dF(s)$ induces a signed measure on $[0,T]$, and in the special case when $dF(s)$ is absolutely continuous with respect to the Lebesgue measure,  we can put the term $\int_{t}^{T}\zeta(s)dF(s)$ into the generator and the ABSEE \eqref{absee-0} reduces to a standard form without datum acting on an interval. 

Similar to the notation $\eta_\delta(t)$ introduced in \eqref{Myeq3-3},  we set for $t\in \lbrack 0,T]$,  
\begin{equation}\label{e:eta-delta+}
\eta _{\delta _{+}}(t):=\int_{-\delta }^{0}\eta (t-s)m(ds),
\end{equation}
whenever the integration of the function $\eta (t-\cdot ):[-\delta
,0]\rightarrow E$ with respect to $m$ exists. Then the integrals with respect to $m(ds)$ appearing in \eqref{absee-0} can be abbreviated as
\begin{equation*}
p_{\delta _{+}}(t)=\int_{-\delta }^{0}p(t-s)m(ds)\text{ and }q_{\delta
_{+}}(t)=\int_{-\delta }^{0}q(t-s)m(ds).
\end{equation*}

 Recall that  $L_{\mathbb{F},F}^{2}(0,T;H)$  denotes the set of all $H$-valued progressively measurable processes $\phi$ satisfying
\begin{equation*}
\mathbb{E}\Big[ \int_{0}^{T}\Vert \phi (t)\Vert _{H}^{2}d|F|_v(t)\Big] <\infty.
\end{equation*}

To obtain the existence and uniqueness of solutions to \eqref{absee-0}, we
impose the following assumptions.

\begin{itemize}
\item[(\textbf{B1})] For each $(p,q,p_{\delta _{+}},q_{\delta _{+}}),$ $%
g(\cdot ,\cdot ,p,q,p_{\delta _{+}},q_{\delta _{+}})$ is a measurable function. $%
g(\cdot ,\cdot ,0,0,0,0)\in L_{\mathbb{F}}^{1,2}(0,T;H)$, $\xi \in L^{2}(%
\mathcal{F}_{T};H)$ and $\zeta\in L_{\mathbb{F},F}^{2}(0,T;H)$, with $L_{\mathbb{F}}^{1,2}(0,T;H)$ being the
space of $H$-valued progressively measurable processes $\phi(\cdot)$ with  norm
$$\Vert \phi\Vert_{L_{\mathbb{F}}^{1,2}(0,T;H)}=\left(\mathbb{\mathbb{E}}
\left[\left(\int_{0}^{T}\Vert \phi(t)\Vert_{H}{d}t\right)^{2}\right]\right)^{\frac{1}{2}}.$$ 

\item[(\textbf{B2})] For each $x\in V,$ $\mathcal{M}(\cdot ,\cdot )x$ and $%
\mathcal{N}(\cdot ,\cdot )x$ are progressively measurable. There exist
constants $\alpha >0$ and $\lambda \in \mathbb{R}$ such that for each $%
(t,\omega )\in \lbrack 0,T]\times \Omega $, 
\begin{equation*}
2\left\langle \mathcal{M}(t)x,x\right\rangle _{\ast }+\Vert \mathcal{N}
^{\ast }(t)x\Vert _{\mathcal{L}_{2}^{0}}^{2}\leq -\alpha \Vert x\Vert
_{V}^{2}+\lambda \Vert x\Vert _{H}^{2},\quad \text{ for all } x\in V,
\end{equation*}
where $\mathcal N^*\in \mathcal L(V, \mathcal L_2^0)$ is the adjoint operator of $\mathcal N\in \mathcal L(\mathcal L_2^0, V^*)$. 

\item[(\textbf{B3})] There exists a constant $K_{1}>0$ such that for almost all $(t,\omega )\in \lbrack 0,T]\times \Omega $, 
\begin{equation*}
\Vert \mathcal{M}(t)x\Vert _{\ast }\leq K_{1}\Vert x\Vert
_{V},\,\,\,\forall x\in V.
\end{equation*}

\item[(\textbf{B4})] There exists a positive constant $K$ such that for
almost all $(t,\omega )\in \lbrack 0,T]\times \Omega $, 
\begin{align*}
& \Vert g(t,p,q,p_{\delta _{+}},q_{\delta _{+}})-g(p^{\prime },q^{\prime
},p_{\delta _{+}}^{\prime },q_{\delta _{+}}^{\prime })\Vert _{H}^{2} \\
& \leq K\Big(\Vert p-p^{\prime }\Vert _{V}^{2}+\Vert q-q^{\prime }\Vert _{%
\mathcal{L}_{2}^{0}}^{2}+\Vert p_{\delta _{+}}-p_{\delta _{+}}^{\prime
}\Vert _{V}^{2}+\Vert q_{\delta _{+}}-q_{\delta _{+}}^{\prime }\Vert _{%
\mathcal{L}_{2}^{0}}^{2}\Big)
\end{align*}%
holds for $(p,q,p_{\delta _{+}},q_{\delta _{+}}),(p^{\prime },q^{\prime
},p_{\delta _{+}}^{\prime },q_{\delta _{+}}^{\prime })\in V\times \mathcal{L}%
_{2}^{0}\times V\times \mathcal{L}_{2}^{0}$;

\item[(\textbf{B5})] The total variation $|F|_v$ of $F$ is uniformly bounded by $K_F$. 
\end{itemize}

Similar to \eqref{Myeq2-41},  (B2) and (B3) yield
\begin{equation}
\Vert \mathcal{N}(t)x\Vert_{V^\ast}\leq C_{1}\Vert x\Vert
_{\mathcal{L}_{2}^{0}},\text{ for }x\in \mathcal{L}_{2}^{0},  \label{Myeq2-41(2)}
\end{equation}
 where $C_{1}$ is a constant depending on $\lambda $ and $K$.
\begin{remark}
If $\mathcal{M}$ and $\mathcal{N}$ are the adjoint operators of $A$
and $B$ respectively which satisfy the conditions (A2)-(A3), $\mathcal{M}$ and $\mathcal{N}$ satisfy (B2)-(B3) naturally.
\end{remark}

\begin{definition}\label{def:ABSEE}
We say the process $(p(\cdot ),q(\cdot ))\in L_{\mathbb{F}}^{2}(0,T+\delta
;V)\times L_{\mathbb{F}}^{2}(0,T+\delta ;\mathcal{L}_{2}^{0})$ is a
solution to \eqref{absee-0}, if for  $dt\times dP$-almost all $(t,\omega )\in \lbrack 0,T+\delta
]\times \Omega ,$ it holds in $V^{\ast }$ that:%
\begin{equation*}
\left\{ 
\begin{aligned}
p(t)=&\,\, \xi +\int_{(t,T]}\zeta(s)dF(s)+\int_{t}^{T}\bigg\{\mathcal{M}(s)p(s)+\mathcal{N}(s)q(s)\\
&+\mathbb{%
E}^{\mathcal{F}_{s}}[g(s,p(s),q(s),p_{\delta _{+}}(s),q_{\delta
_{+}}(s))]\bigg\}ds  -\int_{t}^{T}q(s)dw(s),\,\,t\in \lbrack 0,T], \\
p(t)=&\,\, 0,\ q(t)=0,\,\,t\in (T,T+\delta ],
\end{aligned}%
\right.
\end{equation*}%
or alternatively, for   $dt\times dP$-almost all $(t,\omega )\in \lbrack 0,T]\times \Omega $ and
every $\varphi \in V$, 
\begin{equation}
\left\{ 
\begin{aligned}
\left\langle p(t),\varphi \right\rangle _{H}=&\,\, \left\langle \xi ,\varphi
\right\rangle _{H}+\int_{(t,T]}\langle\zeta(s),\varphi\rangle_H dF(s)+\int_{t}^{T}\left\langle \mathcal{M}(s)p(s),\varphi
\right\rangle _{\ast }ds\\
& +\int_{t}^{T}\left\langle \mathcal{N}(s)q(s),\varphi
\right\rangle _{\ast }ds +\int_{t}^{T}\big<\mathbb{E}^{\mathcal{F}_{s}}[g(s,p(s),q(s),p_{\delta
_{+}}(s),q_{\delta _{+}}(s))],\varphi \big>_{H}ds \\
& -\int_{t}^{T}\left\langle q(s)dw(s),\varphi \right\rangle
_{H},\,\,t\in \lbrack 0,T], \\
p(t)=&\,\, 0,\ q(t)=0,\,\,t\in (T,T+\delta ].
\end{aligned}%
\right.
\end{equation}
\end{definition}

In parallel with Lemma \ref{Le3-2}, we have the following result. 

\begin{lemma}
\label{Le3-3} For a process $\zeta \in L^{2}([t,T+\delta ];E)$ and a
non-decreasing function $\lambda:[t,T+\delta ]\rightarrow \R_+$, we
have 
\begin{equation}
\int_{t}^{T}\lambda (s)\Vert \zeta _{\delta _{+}}(s)\Vert _{E}^{2}ds\leq
m^{2}([-\delta ,0])\int_{t}^{T+\delta }\lambda (s)\Vert \zeta (s)\Vert
_{E}^{2}ds.  \label{Myeq3-7}
\end{equation}
\end{lemma}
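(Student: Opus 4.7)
The plan is to mirror the argument given for Lemma \ref{Le3-2}, with the direction of the time shift and the required monotonicity of $\lambda$ both reversed. First I would apply the Cauchy--Schwarz inequality to the finite measure $m$ to get the pointwise bound
$\|\zeta_{\delta_+}(s)\|_E^2 \leq m([-\delta,0])\int_{-\delta}^{0}\|\zeta(s-r)\|_E^2 m(dr)$
for each $s\in[t,T]$. Multiplying by $\lambda(s)$, integrating over $[t,T]$, and exchanging the order of integration via Fubini's theorem rewrites the left-hand side of \eqref{Myeq3-7} as
$m([-\delta,0])\int_{-\delta}^{0}\bigl(\int_{t}^{T}\lambda(s)\|\zeta(s-r)\|_E^2\,ds\bigr)m(dr)$.

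Next I would perform the translation $u=s-r$ in the inner integral, so that the $s$-range $[t,T]$ becomes $u\in[t-r,T-r]\subseteq[t,T+\delta]$ (since $r\in[-\delta,0]$), and the integrand becomes $\lambda(u+r)\|\zeta(u)\|_E^2$. Because $r\leq 0$ and $\lambda$ is \emph{non-decreasing}, one has $\lambda(u+r)\leq\lambda(u)$; this is exactly the point where the monotonicity hypothesis is used, and it is the dual of the step in Lemma \ref{Le3-2} where the non-increasing hypothesis was invoked, reflecting the fact that the lag in the definition of $\zeta_{\delta_+}$ has the opposite sign from that in $\eta_{\delta}$. Finally, enlarging the inner domain of integration from $[t-r,T-r]$ to $[t,T+\delta]$ (valid since $-r\in[0,\delta]$) and integrating out the trivial outer $m(dr)$ produces the factor $m^2([-\delta,0])$ and yields \eqref{Myeq3-7}.

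There is no genuine obstacle: every step is a routine adaptation of the proof of Lemma \ref{Le3-2}, and the only substantive modification is the sign flip in the change of variables and the correspondingly opposite monotonicity condition on $\lambda$. Measurability and integrability of $\zeta(s-r)$ on the relevant range are immediate from $\zeta\in L^2([t,T+\delta];E)$ together with $s-r\in[t,T+\delta]$ for $(s,r)\in[t,T]\times[-\delta,0]$, which justifies the use of Fubini's theorem.
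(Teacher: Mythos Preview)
Your proposal is correct and follows essentially the same route as the paper's proof: Cauchy--Schwarz (the paper calls it H\"older) for the pointwise bound, Fubini to exchange integrals, the substitution $u=s-r$, the monotonicity bound $\lambda(u+r)\leq\lambda(u)$, and enlargement of the inner domain to $[t,T+\delta]$. The only differences are purely notational.
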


\begin{proof}
From the H\"{o}lder's inequality and the non-decreasing property of $\lambda
,$ 
\begin{align*}
\int_{t}^{T}\lambda (s)\Vert \zeta _{\delta _{+}}(s)\Vert _{E}^{2}ds &\leq
m([-\delta ,0])\int_{t}^{T}\lambda (s)\int_{-\delta }^{0}\Vert \zeta
(s-r)\Vert _{E}^{2}m(dr)ds \\
&=m([-\delta ,0])\int_{-\delta }^{0}\int_{t}^{T}\lambda (s)\Vert \zeta
(s-r)\Vert _{E}^{2}dsm(dr) \\
&=m([-\delta ,0])\int_{-\delta }^{0}\int_{t-r}^{T-r}\lambda (s+r)\Vert \zeta
(s)\Vert _{E}^{2}dsm(dr) \\
&\leq m([-\delta ,0])\int_{-\delta }^{0}\int_{t-r}^{T-r}\lambda (s)\Vert
\zeta (s)\Vert _{E}^{2}dsm(dr) \\
&\leq m([-\delta ,0])\int_{-\delta }^{0}\int_{t}^{T+\delta }\lambda (s)\Vert
\zeta (s)\Vert _{E}^{2}dsm(dr) \\
&=m^{2}([-\delta ,0])\int_{t}^{T+\delta }\lambda (s)\Vert \zeta (s)\Vert
_{E}^{2}ds.
\end{align*}

This completes the proof.
\end{proof}

In parallel to the a apriori estimate (Theorem \ref{estimate}) for SDEE \eqref{sdee},  we have the following result for ABSEE~(\ref{absee-0}). 

\begin{theorem}
\label{es-p} Assume the assumptions (B1)-(B4) hold. If $(p(\cdot),q(\cdot))$ is a solution to ABSEE~(\ref{absee-0})
associated with $(\xi,g,\zeta)$ in the sense of Definition \ref{def:ABSEE}, there exists a positive constants $C$
depending on $\lambda,\alpha,K$ and $K_{1}$ such that
\begin{align*}
&  \mathbb{E}\big[\sup\limits_{0\leq t\leq T}\Vert p(t)\Vert_{H}%
^{2}\big]+\mathbb{E}\int_{0}^{T}\Vert q(t)\Vert_{\mathcal{L}_{2}^{0}}%
^{2}dt+\mathbb{E}\int_{0}^{T}\Vert p(t)\Vert_{V}^{2}dt\\
&  \leq C\bigg\{\mathbb{E}[\Vert\xi\Vert_{H}^{2}]+\mathbb{E}%
\int_{(0,T]}\Vert\zeta(t)\Vert_{H}^{2}\Delta F(t)dF(t)+\mathbb{E}%
\Big(\int_{(0,T]}\Vert\zeta(t)\Vert_{H}d|F|_v(t)\bigg)^{2}\\
&  \hspace{2em}+\Big(\mathbb{E}\int_{0}^{T}\Vert g(t,0,0,0,0)\Vert_{H}dt\Big)^{2}\Big\}.
\end{align*}
Moreover, let $(p^{\prime}(\cdot),q^{\prime}(\cdot))$ be a solution to \eqref{absee-0} with $(\xi^{\prime},g^{\prime
},\zeta^{\prime})$. Then the following estimate holds:
\begin{align}
&  \mathbb{E}\big[\sup\limits_{0\leq t\leq T}\Vert p(t)-p^{\prime}(t)\Vert
_{H}^{2}\big]+\mathbb{E}\int_{0}^{T}\Vert q(t)-q^{\prime}(t)\Vert
_{\mathcal{L}_{2}^{0}}^{2}dt+\mathbb{E}\int_{0}^{T}\Vert p(t)-p^{\prime
}(t)\Vert_{V}^{2}dt\leq C\bigg\{\mathbb{E}[\Vert\xi-\xi^{\prime}\Vert_{H}%
^{2}]\nonumber\label{bes-difference}\\
&  \hspace{2em}+\mathbb{E}\int_{(0,T]}\Vert\zeta(t)-\zeta^{\prime}(t)\Vert_{H}^{2}\Delta
F(t)dF(t)+\mathbb{E}\Big(\int_{(0,T]}\Vert\zeta(t)-\zeta^{\prime}(t)\Vert_{H}%
d|F|_v(t)\Big)^{2}\\
&  \hspace{2em}+\mathbb{E}\Big(\int_{0}^{T}\Vert g(t,p^{\prime}(t),q^{\prime}%
(t),p_{\delta_{+}}^{\prime}(t),q_{\delta_{+}}^{\prime}(t))-g^{\prime
}(t,p^{\prime}(t),q^{\prime}(t),p_{\delta_{+}}^{\prime}(t),q_{\delta_{+}%
}^{\prime}(t))\Vert_{H}dt\Big)^{2}\bigg\},\nonumber
\end{align}
where $C$ is a positive constant depending on $\lambda,\alpha,K$ and $K_{1}.$
\end{theorem}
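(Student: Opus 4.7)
The plan is to establish the stability bound \eqref{bes-difference} first; the a priori estimate then follows by taking $\xi'=0$, $g'=0$, $\zeta'=0$, which forces $p'\equiv 0$, $q'\equiv 0$ by uniqueness. Set $\hat p:=p-p'$, $\hat q:=q-q'$, $\hat\xi:=\xi-\xi'$, $\hat\zeta:=\zeta-\zeta'$, and split the driver difference as $\hat g(s):=\hat g_1(s)+\hat g_2(s)$, where $\hat g_1(s):=g(s,p,q,p_{\delta_+},q_{\delta_+})-g(s,p',q',p'_{\delta_+},q'_{\delta_+})$ and $\hat g_2(s):=(g-g')(s,p',q',p'_{\delta_+},q'_{\delta_+})$. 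Then $\hat p$ satisfies an ABSEE that, in forward differential form, reads $d\hat p(s)=-[\mathcal{M}\hat p+\mathcal{N}\hat q+\mathbb{E}^{\mathcal{F}_s}[\hat g]]ds-\hat\zeta(s)dF(s)+\hat q(s)dw(s)$, fitting exactly the framework of Lemma \ref{Ito-lemma}. Applying that lemma to $\|\hat p(s)\|_H^2$ between $t$ and $T$ (using $\hat p(T)=\hat\xi$) and rearranging yields the energy identity
\begin{align*}
\|\hat p(t)\|_H^2+\int_t^T\|\hat q\|_{\mathcal{L}_2^0}^2 ds
&=\|\hat\xi\|_H^2+2\int_t^T\langle\mathcal{M}\hat p+\mathcal{N}\hat q,\hat p\rangle_{\ast}ds+2\int_t^T\langle\mathbb{E}^{\mathcal{F}_s}[\hat g],\hat p\rangle_H ds\\
&\quad+2\int_{(t,T]}\langle\hat p,\hat\zeta\rangle_H dF+\int_{(t,T]}\|\hat\zeta\|_H^2\Delta F(s)dF(s)-2\int_t^T\langle\hat p,\hat q\,dw\rangle_H.
\end{align*}

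The key step is to combine the coercivity (B2) with a weighted Cauchy--Schwarz estimate on the coupling term: for any $\theta>0$,
\[2\langle\mathcal{N}\hat q,\hat p\rangle_{\ast}=2\langle\hat q,\mathcal{N}^{\ast}\hat p\rangle_{\mathcal{L}_2^0}\le\theta\|\hat q\|^2+\theta^{-1}\|\mathcal{N}^{\ast}\hat p\|^2,\]
together with the adjoint bound $\|\mathcal{N}^{\ast}\hat p\|_{\mathcal{L}_2^0}^2\le C_1^2\|\hat p\|_V^2$ obtained from \eqref{Myeq2-41(2)}, giving
\[2\langle\mathcal{M}\hat p+\mathcal{N}\hat q,\hat p\rangle_{\ast}\le-\bigl(\alpha-(\theta^{-1}-1)C_1^2\bigr)\|\hat p\|_V^2+\lambda\|\hat p\|_H^2+\theta\|\hat q\|_{\mathcal{L}_2^0}^2.\]
Choosing $\theta\in(C_1^2/(\alpha+C_1^2),1)$ yields both a strictly positive coefficient $\alpha'>0$ on $\|\hat p\|_V^2$ and a slack $1-\theta>0$ on $\|\hat q\|^2$. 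The Lipschitz piece $\hat g_1$ is then controlled by (B4) and Lemma \ref{Le3-3} (applied with $\lambda\equiv 1$, noting $\hat p\equiv\hat q\equiv 0$ on $(T,T+\delta]$), which replaces $\int_0^T\|\hat p_{\delta_+}\|_V^2 ds$ and $\int_0^T\|\hat q_{\delta_+}\|^2 ds$ by multiples of $\int_0^T\|\hat p\|_V^2 ds$ and $\int_0^T\|\hat q\|^2 ds$; choosing a Young weight $\mu$ large enough absorbs all these into the $\alpha'$ and $1-\theta$ slacks, leaving only a $\mu\|\hat p\|_H^2$ Gronwall term.

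The data contributions are controlled by sup-norm factorization. Using the tower property $\mathbb{E}\langle\mathbb{E}^{\mathcal{F}_s}[\hat g_2],\hat p\rangle=\mathbb{E}\langle\hat g_2,\hat p\rangle$, Cauchy--Schwarz, and Young's inequality,
\[2\mathbb{E}\int_t^T\langle\mathbb{E}^{\mathcal{F}_s}[\hat g_2],\hat p\rangle_H ds+2\mathbb{E}\int_{(t,T]}\langle\hat p,\hat\zeta\rangle_H dF\le\tfrac14\mathbb{E}\sup_{s\in[0,T]}\|\hat p\|_H^2+C\mathbb{E}\Big(\int_0^T\|\hat g_2\|_H ds\Big)^2+C\mathbb{E}\Big(\int_{(0,T]}\|\hat\zeta\|_H d|F|_v\Big)^2,\]
while the jump correction $\int_{(t,T]}\|\hat\zeta\|_H^2\Delta F\,dF$ enters directly as a data term. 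The B-D-G inequality yields $\mathbb{E}\sup_t\bigl|\int_t^T\langle\hat p,\hat q\,dw\rangle\bigr|\le\tfrac14\mathbb{E}\sup\|\hat p\|_H^2+C\mathbb{E}\int_0^T\|\hat q\|^2 ds$. Taking expectations in the energy identity, absorbing the quadratic $\|\hat p\|_V^2$ and $\|\hat q\|^2$ contributions on the left, applying Gronwall's lemma to $t\mapsto\mathbb{E}\|\hat p(t)\|_H^2$, and finally taking $\sup_{t\in[0,T]}$ together with the B-D-G absorption of $\tfrac12\mathbb{E}\sup\|\hat p\|_H^2$ back to the left produce the claimed bound \eqref{bes-difference}.

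The main obstacle is the tight weight balancing in the $(\mathcal{M},\mathcal{N})$-coupling estimate: (B2) offers only a single exchangeable slack parameter, and $\theta\in(C_1^2/(\alpha+C_1^2),1)$ must be chosen so that (i) the $\|\hat p\|_V^2$ coefficient remains positive after invoking \eqref{Myeq2-41(2)} to control $\|\mathcal{N}^{\ast}\hat p\|^2$, and (ii) the slack on $\|\hat q\|^2$ is still large enough to absorb the Lipschitz contribution from $\hat g_1$, whose $V$-norm delay bound scales with $m^2([-\delta,0])K$. A secondary new feature, peculiar to the setting of general finite-variation $F$, is the jump correction $\int_{(t,T]}\|\hat\zeta\|_H^2\Delta F\,dF$ coming from the discontinuous It\^o formula in Lemma \ref{Ito-lemma}; it is irreducible and must appear directly on the data side of \eqref{bes-difference}.
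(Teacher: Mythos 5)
Your proposal is correct and follows essentially the same route as the paper's proof: the discontinuous It\^o formula of Lemma \ref{Ito-lemma} applied to $\Vert \hat p\Vert_H^2$, the coercivity (B2) combined with a Young-weighted split of the $\mathcal{N}\hat q$ cross term, Lemma \ref{Le3-3} to fold the delayed terms back into $\int_0^T\Vert\hat p\Vert_V^2\,ds$ and $\int_0^T\Vert\hat q\Vert_{\mathcal{L}_2^0}^2\,ds$, sup-norm factorization of the $\hat\zeta$ and $\hat g_2$ contributions, Gronwall, and a B-D-G bootstrap for the supremum. The only cosmetic difference is that you budget the coercivity slack through a single parameter $\theta$ together with the derived adjoint bound $\Vert\mathcal{N}^{\ast}\hat p\Vert_{\mathcal{L}_2^0}\le C_1\Vert\hat p\Vert_V$, whereas the paper splits $2\langle\mathcal{M}\hat p,\hat p\rangle_{\ast}$ with an $\varepsilon$ and invokes (B3) directly; both yield the same absorption.
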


\begin{proof}
It suffices to prove \eqref{bes-difference}, which implies the first one. Set
\[
\hat{p}(t)=p(t)-p^{\prime}(t),\ \hat{q}(t)=q(t)-q^{\prime}(t),\ \hat{\zeta
}(t)=\zeta(t)-\zeta^{\prime}(t), \text{ and }\hat{\xi}=\xi-\xi^{\prime}.
\]
Applying It\^{o}'s formula \eqref{e:ito-lemma} to $\Vert\hat{p}(t)\Vert_{H}^{2}$ on $[t,T],$ we have
\begin{align*}
&  \Vert\hat{p}(t)\Vert_{H}^{2}+\int_{t}^{T}\Vert\hat{q}(s)\Vert
_{\mathcal{L}_{2}^{0}}^{2}ds=\Vert\hat{\xi}\Vert_{H}^{2}+2\int_{t}%
^{T}\Big\{\big<\mathcal{M}(s)\hat{p}(s),\hat{p}(s)\big>_{\ast}+\left\langle
\mathcal{N}(s)\hat{q}(s),\hat{p}(s)\right\rangle _{\ast}\\
&  +\big<\mathbb{E}^{\mathcal{F}_{s}}[g(s,p(s),q(s),p_{\delta_{+}%
}(s),q_{\delta_{+}}(s))-g^{\prime}(s,p^{\prime}(s),q^{\prime}(s),p_{\delta
_{+}}^{\prime}(s),q_{\delta_{+}}^{\prime}(s))],\hat{p}(s)\big>_{H}\Big\}ds\\
&  +2\int_{(t,T]}\big<\hat{p}(s),\hat{\zeta}(s)\big>_{H}dF(s)+\int%
_{(t,T]}\Vert\hat{\zeta}(s)\Vert_{H}^{2}\Delta F(s)dF(s)\\
&  -2\int_{t}^{T}\left\langle \hat{p}(s),\hat{q}(s)dw(s)\right\rangle _{H}.
\end{align*}

By assumptions (B2)-(B3), we obtain that, for some positive constant
$\varepsilon$ to be determined,
\begin{align}
&  \Vert\hat{p}(t)\Vert_{H}^{2}+\int_{t}^{T}\Vert\hat{q}(s)\Vert
_{\mathcal{L}_{2}^{0}}^{2}ds\nonumber\label{EQ-5}\\
&  \leq\Vert\hat{\xi}\Vert_{H}^{2}+2\int_{(t,T]}\big<\hat{p}(s),\hat{\zeta
}(s)\big>_{H}dF(s)+\int_{(t,T]}\Vert\hat{\zeta}(s)\Vert_{H}^{2}\Delta
F(s)dF(s)\nonumber\\
&  \hspace{1em}+\int_{t}^{T}\Big\{-2\varepsilon\big<\mathcal{M}(s)\hat
{p}(s),\hat{p}(s)\big>_{\ast}+2(1+\varepsilon)\big<\mathcal{M}(s)\hat
{p}(s),\hat{p}(s)\big>_{\ast}+(1+\varepsilon)\Vert\mathcal{N}^{\ast}(s)\hat
{p}(s)\Vert^{2}_{\mathcal L_2^0}+\tfrac{1}{1+\varepsilon}\Vert\hat{q}(s)\Vert_{\mathcal{L}%
_{2}^{0}}^{2}\nonumber\\
&  \hspace{2.5em}+2\big|\big<\mathbb{E}^{\mathcal{F}_{s}}%
[g(s,p(s),q(s),p_{\delta_{+}}(s),q_{\delta_{+}}(s))-g(s,p^{\prime
}(s),q^{\prime}(s),p_{\delta_{+}}^{\prime}(s),q_{\delta_{+}}^{\prime
}(s))],\hat{p}(s)\big>_{H}\big|\nonumber\\
&  \hspace{2.5em}+2\big|\big<\mathbb{E}^{\mathcal{F}_{s}}[g(s,p^{\prime
}(s),q^{\prime}(s),p_{\delta_{+}}^{\prime}(s),q_{\delta_{+}}^{\prime
}(s))-g^{\prime}(s,p^{\prime}(s),q^{\prime}(s),p_{\delta_{+}}^{\prime
}(s),q_{\delta_{+}}^{\prime}(s))],\hat{p}(s)\big>_{H}\big|\Big\}ds\nonumber\\
&  \hspace{1em}-2\int_{t}^{T}\left\langle \hat{q}(s)dw(s),\hat{p}%
(s)\right\rangle _{H}\\
&  \leq\Vert\hat{\xi}\Vert_{H}^{2}+2\sup_{t\leq s\leq T}\Vert\hat{p}%
(s)\Vert_{H}\int_{(t,T]}\Vert\hat{\zeta}(s)\Vert_{H}d|F|_v(s)+\int_{(t,T]}%
\Vert\hat{\zeta}(s)\Vert_{H}^{2}\Delta F(s)dF(s)\nonumber\\
&  \hspace{1em}+\int_{t}^{T}\Big\{2\varepsilon K_{1}\Vert\hat{p}(s)\Vert
_{V}^{2}+(1+\varepsilon)(-\alpha\Vert\hat{p}(s)\Vert_{V}^{2}+\lambda\Vert
\hat{p}(s)\Vert_{H}^{2})+\tfrac{1}{1+\varepsilon}\Vert\hat{q}(s)\Vert
_{\mathcal{L}_{2}^{0}}^{2}\nonumber\\
&  \hspace{2.5em}+\tfrac{4K(1+m^{2}([-\delta,0]))}{\varepsilon}\Vert\hat
{p}(s)\Vert_{H}^{2}+\tfrac{\varepsilon}{4K(1+m^{2}([-\delta,0]))}
\Big\Vert\mathbb{E}^{\mathcal{F}_{s}}[g(s,p(s),q(s),p_{\delta_{+}}(s),q_{\delta
_{+}}(s))\nonumber\\
&  \hspace{12em}-g(s,p^{\prime}(s),q^{\prime}(s),p_{\delta_{+}}^{\prime
}(s),q_{\delta_{+}}^{\prime}(s))]\Big\Vert_{H}^{2}\Big\}ds\nonumber\\
&  \hspace{1em}+2\sup_{t\leq s\leq T}\Vert\hat{p}(s)\Vert_{H}\int_{t}^{T}\Big
\Vert\mathbb{E}^{\mathcal{F}_{s}}[g(s,p^{\prime}(s),q^{\prime}(s),p_{\delta
_{+}}^{\prime}(s),q_{\delta_{+}}^{\prime}(s))\nonumber\\
&  \hspace{4em}-g^{\prime}(s,p^{\prime}(s),q^{\prime}(s),p_{\delta_{+}%
}^{\prime}(s),q_{\delta_{+}}^{\prime}(s))]\Big\Vert_{H}ds-2\int_{t}^{T}%
\left\langle \hat{q}(s)dw(s),\hat{p}(s)\right\rangle _{H},\nonumber
\end{align}
Taking expectation on both sides and combining the Lipschitz condition (B4)
with Lemma \ref{Le3-3} (noting $\hat p(t)=0$ and $\hat q(t)=0$ for $t\in(T, T+\delta]$), we deduce that
\begin{align*}
&  \mathbb{E}[\Vert\hat{p}(t)\Vert_{H}^{2}]+\mathbb{E}\int_{t}^{T}\Vert\hat
{q}(s)\Vert_{\mathcal{L}_{2}^{0}}^{2}ds\\
&  \leq\mathbb{E}[\Vert\hat{\xi}\Vert_{H}^{2}]+2\mathbb{E}\big[\sup_{t\leq
s\leq T}\Vert\hat{p}(s)\Vert_{H}\int_{(t,T]}\Vert\hat{\zeta}(s)\Vert
_{H}d|F|_v(s)\big]+\mathbb{E}\int_{(t,T]}\Vert\hat{\zeta}(s)\Vert_{H}^{2}\Delta
F(s)dF(s)\\
&  \hspace{1em}+\mathbb{E}\int_{t}^{T}\Big\{2\varepsilon K_{1}\Vert\hat
{p}(s)\Vert_{V}^{2}+(1+\varepsilon)(-\alpha\Vert\hat{p}(s)\Vert_{V}%
^{2}+\lambda\Vert\hat{p}(s)\Vert_{H}^{2})+\tfrac{1}{1+\varepsilon}\Vert\hat
{q}(s)\Vert_{\mathcal{L}_{2}^{0}}^{2}\\
&  \hspace{4em}+\tfrac{4K(1+m^{2}([-\delta,0]))}{\varepsilon}\Vert\hat
{p}(s)\Vert_{H}^{2}+\tfrac{\varepsilon}{4(1+m^{2}([-\delta,0]))}\big\{\Vert
\hat{p}(s)\Vert_{V}^{2}+\Vert\hat{q}(s)\Vert_{\mathcal{L}_{2}^{0}}^{2}\\
&  \hspace{15em}+ \Vert\hat{p}_{\delta_{+}}(s)\Vert_{V}^{2}+\Vert\hat{q}_{\delta_{+}}(s)\Vert_{\mathcal{L}_{2}^{0}}^{2}\big\}\Big\}ds\\
&  \hspace{1em}+2\mathbb{E}\Big[\sup_{t\leq s\leq T}\Vert\hat{p}(s)\Vert
_{H}\int_{t}^{T}\Big\Vert\mathbb{E}^{\mathcal{F}_{s}}[g(s,p^{\prime}(s),q^{\prime
}(s),p_{\delta_{+}}^{\prime}(s),q_{\delta_{+}}^{\prime}(s))\\
&  \hspace{14em}-g^{\prime}(s,p^{\prime}(s),q^{\prime}(s),p_{\delta_{+}%
}^{\prime}(s),q_{\delta_{+}}^{\prime}(s))]\Big\Vert_{H}ds\Big]\\
&  \leq\mathbb{E}[\Vert\hat{\xi}\Vert_{H}^{2}]+2\mathbb{E}\big[\sup_{t\leq
s\leq T}\Vert\hat{p}(s)\Vert_{H}\int_{(t,T]}\Vert\hat{\zeta}(s)\Vert
_{H}d|F|_v(s)\big]+\mathbb{E}\int_{(t,T]}\Vert\hat{\zeta}(s)\Vert_{H}^{2}\Delta
F(s)dF(s)\\
&  \hspace{1em}+\mathbb{E}\int_{t}^{T}\Big\{\big(2\varepsilon K_{1}%
+\tfrac{\varepsilon}{4}-\alpha(1+\varepsilon)\big)\Vert\hat{p}(s)\Vert_{V}%
^{2}+\big(\tfrac{1}{1+\varepsilon}+\tfrac{\varepsilon}{4}\big)\Vert\hat
{q}(s)\Vert_{\mathcal{L}_{2}^{0}}^{2}\\
&  \hspace{6em}+\big(\tfrac{4K(1+m^{2}([-\delta,0]))}{\varepsilon}%
+\lambda(1+\varepsilon)\big)\Vert\hat{p}(s)\Vert_{H}^{2}\Big\}ds\\
&  \hspace{1em}+2\mathbb{E}\Big[\sup_{t\leq s\leq T}\Vert\hat{p}(s)\Vert
_{H}\int_{t}^{T}\Vert\mathbb{E}^{\mathcal{F}_{s}}[g(s,p^{\prime}(s),q^{\prime
}(s),p_{\delta_{+}}^{\prime}(s),q_{\delta_{+}}^{\prime}(s))\\
&  \hspace{14em}-g^{\prime}(s,p^{\prime}(s),q^{\prime}(s),p_{\delta_{+}%
}^{\prime}(s),q_{\delta_{+}}^{\prime}(s))]\Vert_{H}ds\Big].
\end{align*}
Choosing $\varepsilon$ small enough such that
\[
2\varepsilon K_{1}+\frac{\varepsilon}{4}-\alpha(1+\varepsilon)<0\ \ \text{and
}\ \ \frac{1}{1+\varepsilon}+\frac{\varepsilon}{4}<1\Longleftrightarrow \varepsilon\in(0,3),
\]
we can get
\begin{align*}
&  \mathbb{E}[\Vert\hat{p}(t)\Vert_{H}^{2}]+\mathbb{E}\int_{t}^{T}\Vert\hat
{q}(s)\Vert_{\mathcal{L}_{2}^{0}}^{2}ds+\mathbb{E}\int_{t}^{T}\Vert\hat
{p}(s)\Vert_{V}^{2}ds\\
&  \leq C\mathbb{E}\bigg\{\int_{t}^{T}\Vert\hat{p}(s)\Vert_{H}^{2}ds+\Vert
\hat{\xi}\Vert_{H}^{2}+\sup_{t\leq s\leq T}\Vert\hat{p}(s)\Vert_{H}%
\int_{(t,T]}\Vert\hat{\zeta}(s)\Vert_{H}d|F|_v(s)+\int_{(t,T]}\Vert\hat{\zeta
}(s)\Vert_{H}^{2}\Delta F(s)dF(s)\\
&  \hspace{3em}+\sup_{0\leq s\leq T}\Vert\hat{p}(s)\Vert_{H}\int_{0}^{T}%
\Big\Vert\mathbb{E}^{\mathcal{F}_{s}}[g(s,p^{\prime}(s),q^{\prime}(s),p_{\delta
_{+}}^{\prime}(s),q_{\delta_{+}}^{\prime}(s))\\
&  \hspace{14em}-g^{\prime}(s,p^{\prime}(s),q^{\prime}(s),p_{\delta_{+}%
}^{\prime}(s),q_{\delta_{+}}^{\prime}(s))]\Big\Vert_{H}ds\bigg\},
\end{align*}
where $C$ is a positive constant depending on $\lambda,\alpha,K,K_{1}$.
Applying  Gr\"onwall's inequality to $\mathbb{E}[\Vert\hat{p}(t)\Vert_{H}^{2}]$ yields that, for some undetermined $a>0,$
\begin{align}
&  \mathbb{E}[\Vert\hat{p}(t)\Vert_{H}^{2}]+\mathbb{E}\int_{t}^{T}\Vert\hat
{q}(s)\Vert_{\mathcal{L}_{2}^{0}}^{2}ds+\mathbb{E}\int_{t}^{T}\Vert\hat
{p}(s)\Vert_{V}^{2}ds\nonumber\label{supout}\\
&  \leq C\mathbb{E}\Big\{\Vert\hat{\xi}\Vert_{H}^{2}+\sup_{t\leq s\leq T}%
\Vert\hat{p}(s)\Vert_{H}\int_{(t,T]}\Vert\hat{\zeta}(s)\Vert_{H}
d|F|_v(s)+\int_{(t,T]}\Vert\hat{\zeta}(s)\Vert_{H}^{2}\Delta
F(s)dF(s)\nonumber\\
&  \hspace{3em}+\sup_{0\leq s\leq T}\Vert\hat{p}(s)\Vert_{H}\int_{0}^{T}\Big
\Vert\mathbb{E}^{\mathcal{F}_{s}}[g(s,p^{\prime}(s),q^{\prime}(s),p_{\delta
_{+}}^{\prime}(s),q_{\delta_{+}}^{\prime}(s))\nonumber\\
&  \hspace{14em}-g^{\prime}(s,p^{\prime}(s),q^{\prime}(s),p_{\delta_{+}%
}^{\prime}(s),q_{\delta_{+}}^{\prime}(s))]\Big\Vert_{H}ds\Big\}\\
&  \leq C_{1}\mathbb{E}\Big\{\Vert\hat{\xi}\Vert_{H}^{2}+\int_{(0,T]}\Vert
\hat{\zeta}(t)\Vert_{H}^{2}\Delta F(t)dF(t)+\frac{1}{a}\Big(\int_{(0,T]}\Vert\hat{\zeta}(t)\Vert_{H}d|F|_v(t)\Big)^{2}+a\sup_{0\leq t\leq
T}\Vert\hat{p}(t)\Vert_{H}^{2}\nonumber\\
&  \hspace{1em}+\frac{1}{a}\Big(\int_{0}^{T}\Vert g(t,p^{\prime
}(t),q^{\prime}(t),p_{\delta_{+}}^{\prime}(t),q_{\delta_{+}}^{\prime
}(t))-g^{\prime}(t,p^{\prime}(t),q^{\prime}(t),p_{\delta_{+}}^{\prime
}(t),q_{\delta_{+}}^{\prime}(t))\Vert_{H}dt\Big)^{2}\Big\},\nonumber
\end{align}
with $C_{1}$ being a constant independent of $a$ and may vary from line
to line.

On the other hand, by \eqref{e:QV} and Burkholder-Davis-Gundy inequality, we have for some positive constant $D$, 
\begin{equation}%
\begin{split}
\mathbb{E}\Big[\sup\limits_{0\leq t\leq T}\Big\vert\int_{t}^{T}\left\langle
\hat{q}(s)dw(s),\hat{p}(s)\right\rangle _{H}\Big\vert\Big] 
&  \leq D \mathbb{E}\Big(\int_{0}^{T}\Vert\hat{q}(t)\Vert_{\mathcal{L}_{2}^{0}%
}^{2}\Vert\hat{p}(t)\Vert_{H}^{2}dt\Big)^{\frac{1}{2}}\\
&  \leq D \mathbb{E}\Big[\sup\limits_{0\leq t\leq T}\Vert\hat{p}(t)\Vert
_{H}\Big(\int_{0}^{T}\Vert\hat{q}(t)\Vert_{\mathcal{L}_{2}^{0}}^{2}%
dt\Big)^{\frac{1}{2}}\Big]\\
&  \leq\frac{1}{8}\mathbb{E}\big[\sup\limits_{0\leq t\leq T}\Vert\hat
{p}(t)\Vert_{H}^{2}\big]+2 D^2 \mathbb{E}\int_{0}^{T}\Vert\hat{q}(t)\Vert
_{\mathcal{L}_{2}^{0}}^{2}dt.
\end{split}
\label{EQ-6}%
\end{equation}
Then taking supremum over $t\in\lbrack0,T]$ on both sides of (\ref{EQ-5}) (for
any fixed $\varepsilon>0$), we get
\begin{align*}
&  \sup\limits_{0\leq t\leq T}\Vert\hat{p}(t)\Vert_{H}^{2}\leq C_{1}%
\Big\{\Vert\hat{\xi}\Vert_{H}^{2}+\Big(\int_{(0,T]}\Vert\hat{\zeta}%
(s)\Vert_{H}d|F|_v(s)\Big)^{2}+\int_{(0,T]}\Vert\hat{\zeta}(s)\Vert_{H}%
^{2}\Delta F(s)dF(s)\\
&  \hspace{1em}+\int_{0}^{T}\Vert\hat{p}(s)\Vert_{H}^{2}ds+\int_{0}^{T}%
\Vert\hat{q}(s)\Vert_{\mathcal{L}_{2}^{0}}^{2}ds+\int_{0}^{T}\Vert\hat
{p}(s)\Vert_{V}^{2}ds+\int_{0}^{T}\mathbb{E}^{\mathcal{F}_{s}}[\Vert\hat
{p}_{\delta_{+}}(s)\Vert_{V}^{2}+\Vert\hat{q}_{\delta_{+}}(s)\Vert
_{\mathcal{L}_{2}^{0}}^{2}]ds\\
&  \hspace{1em}+\Big(\int_{0}^{T}\Vert\mathbb{E}^{\mathcal{F}_{s}%
}[g(s,p^{\prime}(s),q^{\prime}(s),p_{\delta_{+}}^{\prime}(s),q_{\delta_{+}%
}^{\prime}(s))-g^{\prime}(s,p^{\prime}(s),q^{\prime}(s),p_{\delta_{+}}%
^{\prime}(s),q_{\delta_{+}}^{\prime}(s))]\Vert_{H}ds\Big)^{2}\Big\}\\
&  \hspace{1em}+\frac{1}{4}\sup_{0\leq s\leq T}\Vert\hat{p}(s)\Vert_{H}%
^{2}+2\sup\limits_{0\leq t\leq T}\Big|\int_{t}^{T}\left\langle \hat{p}%
(s),\hat{q}(s)dw(s)\right\rangle _{H}\Big|.
\end{align*}
Taking expectation on both sides, we then obtain by (\ref{EQ-6}), Lemma
\ref{Le3-3} and (\ref{supout}) that
\begin{align}
&  \mathbb{E}\Big[\sup\limits_{0\leq t\leq T}\Vert\hat{p}(t)\Vert_{H}
^{2}\Big]\leq(\frac{1}{2}+C_{1}a)\mathbb{E}\big[\sup\limits_{0\leq t\leq
T}\Vert\hat{p}(t)\Vert_{H}^{2}\big]\nonumber\label{supin}\\
&  +C_{1}\Big\{\mathbb{E}\Vert\hat{\xi}\Vert_{H}^{2}+\int_{(0,T]}\Vert
\hat{\zeta}(S)\Vert_{H}^{2}\Delta F(s)dF(s)+(1+\frac{1}{a})\Big(\int%
_{(0,T]}\Vert\hat{\zeta}(s)\Vert_{H}d|F|_v(s)\Big)^{2}\\
&  \hspace{0.5em}+(1+\frac{1}{a})\Big(\int_{0}^{T}\Vert g(s,p^{\prime
}(s),q^{\prime}(s),p_{\delta_{+}}^{\prime}(s),q_{\delta_{+}}^{\prime
}(s))-g^{\prime}(s,p^{\prime}(s),q^{\prime}(s),p_{\delta_{+}}^{\prime
}(s),q_{\delta_{+}}^{\prime}(s))\Vert_{H}ds\Big)^{2}\Big\}.\nonumber
\end{align}
Choosing sufficiently small $a$, we get%
\begin{align}
&  \mathbb{E}\Big[\sup\limits_{0\leq t\leq T}\Vert\hat{p}(t)\Vert_{H}%
^{2}\Big]\nonumber\\
&  \leq C_1\Big\{\mathbb{E}\Vert\hat{\xi}\Vert_{H}^{2}+\int_{(0,T]}\Vert
\hat{\zeta}(s)\Vert_{H}^{2}\Delta F(s)dF(s)+\Big(\int_{(t,T]}\Vert\hat{\zeta
}(s)\Vert_{H}d|F|_v(s)\Big)^{2}\\
&  \hspace{0.5em}+\Big(\int_{0}^{T}\Vert g(s,p^{\prime}(s),q^{\prime
}(s),p_{\delta_{+}}^{\prime}(s),q_{\delta_{+}}^{\prime}(s))-g^{\prime
}(s,p^{\prime}(s),q^{\prime}(s),p_{\delta_{+}}^{\prime}(s),q_{\delta_{+}%
}^{\prime}(s))]\Vert_{H}\Big\}ds\Big)^{2}\Big\}.\nonumber
\end{align}
This together with (\ref{supout}) yields the desired estimate \eqref{bes-difference} and 
the proof is concluded.
\end{proof}

Now we are ready to prove the well-posedness result for  ABSEE (\ref{absee-0}).
\begin{theorem}\label{ABSEE-thm}
Assuming (B1)-(B5), there exists  a unique solution $(p(\cdot ),q(\cdot ))\in L_{\mathbb{F}}^{2}(0,T+\delta
;V)\times L_{\mathbb{F}}^{2}(0,T+\delta ; \mathcal{L}_{2}^{0})$ of ABSEE (\ref{absee-0})
in the sense of Definition \ref{def:ABSEE}.
\end{theorem}
\begin{proof}
The uniqueness follows directly from (\ref{bes-difference}) in Theorem \ref{es-p}. The proof of the existence  is divided into the following three steps.

\textit{Step 1. The case of $\zeta\equiv 0$.} We shall make use of the continuation method as in \cite{pw99}. For any $\mu \in
\lbrack 0,1]$ and $f_{0}(\cdot )\in L_{\mathbb{F}}^{1,2}(0,T;H)$, we consider
the ABSEE 
\begin{equation} \label{absee2}
\left\{ 
\begin{aligned}
-dp(t)=&\,\, \Big\{\mathcal{M}(t)p(t)+\mathcal{N}(t)q(t)+\mu \mathbb{E}^{%
\mathcal{F}_{t}}\big[g(t,p(t),q(t),p_{\delta _{+}}(t),q_{\delta _{+}}(t))%
\big]+f_{0}(t)\Big\}dt \\
& \quad -q(t)dw(t),\quad t\in \lbrack 0,T], \\
p(T)=&\,\, \xi ,\ p(t)=0,\ q(t)=0,\quad t\in (T,T+\delta ].
\end{aligned}%
\right. 
\end{equation}%
In the following, we shall prove the well-posedness of the above equation,
which implies the desired result by setting $\mu =1$ and $f_{0}(\cdot
)=0.$

For $\mu =0$, by using a similar approach as in \cite[Proposition 3.2]{10dm}, we can show that
\eqref{absee2} has a unique solution for any $f_{0}(\cdot )\in L_{\mathbb{F}}^{1,2}(0,T;H)$.  This well-posedness result can be extended to all $\mu
\in \lbrack 0,1]$ as follows.

Suppose that equation \eqref{absee2} admits a
unique solution for all $f_{0}(\cdot )\in L_{\mathbb{F}}^{1,2}(0,T;H)$ and some fixed $\mu _{0}\in[0,1)$. Then, for an arbitrarily fixed $f_{0}(\cdot )\in L_{\mathbb{F}}^{1,2}(0,T;H)$,  any
given $(P(\cdot ),Q(\cdot ))\in L_{\mathbb{F}}^{2}(0,T+\delta ;V)\times L_{\mathbb{F}}^{2}(0,T+\delta ;\mathcal {L}_{2}^{0})$, and some $\mu \in \lbrack 0,1]$ to be determined,   the following ABSEE 
\begin{equation}\label{absee3}
\left\{ 
\begin{aligned}
-dp(t)=&\,\, \Big\{\mathcal{M}(t)p(t)+\mathcal{N}(t)q(t)+\mu _{0}\mathbb{E}^{\mathcal{F}_{t}}\big[g(t,p(t),q(t),p_{\delta _{+}}(t),q_{\delta _{+}}(t))%
\big]+f_{0}(t) \\
& \qquad +(\mu -\mu _{0})\mathbb{E}^{\mathcal{F}_{t}}\big[g(t,P(t),Q(t),P_{\delta
_{+}}(t),Q_{\delta _{+}}(t))\big]\Big\}dt \\
& \quad -q(t)dw(t),\,\,t\in \lbrack 0,T], \\
p(T)=& \,\,\xi,\ p(t)=0,\,\,q(t)=0,\,\,t\in (T,T+\delta ],
\end{aligned}
\right.  
\end{equation}
admits a unique solution $(p(\cdot ),q(\cdot ))\in L_{\mathbb{F}
}^{2}(0,T+\delta ;V)\times L_{\mathbb{F}}^{2}(0,T+\delta ;\mathcal{L}%
_{2}^{0})$. by this, we can define the solution mapping $%
I:L_{\mathbb F}^{2}(0,T+\delta ;V)\times L_{\mathbb F}^{2}(0,T+\delta ;\mathcal{L}_{2}^{0})\to L_{\mathbb F}^{2}(0,T+\delta ;V)\times L_{\mathbb F}^{2}(0,T+\delta ;\mathcal{L}_{2}^{0})$ by
\begin{equation*}
(P,Q)\mapsto I(P,Q):=(p,q).
\end{equation*}%
Given $(P_{1}(\cdot ),Q_{1}(\cdot )),(P_{2}(\cdot ),Q_{2}(\cdot ))\in L_{\mathbb{F}}^{2}(0,T+\delta ;V)\times L_{\mathbb{F}}^{2}(0,T+\delta ;\mathcal{L}_{2}^{0})$, by Theorem \ref{es-p} we have 
\begin{align*}
& \mathbb{E}\Big[\int_{0}^{T}\Vert p_{1}(t)-p_{2}(t)\Vert
_{V}^{2}dt+\int_{0}^{T}\Vert q_{1}(t)-q_{2}(t)\Vert _{\mathcal{L}%
_{2}^{0}}^{2}dt\Big] \\
& \leq C|\mu -\mu _{0}|^{2}\mathbb{E}\Big[\int_{0}^{T}\Vert
P_{1}(t)-P_{2}(t)\Vert _{V}^{2}dt+\int_{0}^{T}\Vert Q_{1}(t)-Q_{2}(t)\Vert _{%
\mathcal{L}_{2}^{0}}^{2}dt\Big],
\end{align*}%
where $C$ is a positive constant independent of $\mu$. Thus,  for $\mu \in
\lbrack \mu _{0}-\frac{1}{\sqrt{2C}},\mu _{0}+\frac{1}{\sqrt{2C}}]$, the solution 
mapping $I$ is a contraction mapping on $L_{\mathbb{F}}^{2}(0,T;V)\times L_{\mathbb{F}}^{2}(0,T;\mathcal{L}_{2}^{0})$, which implies the well-posedness of (\ref
{absee3}). So starting with $\mu_0=0$ and repeating the above
procedure, we can prove that there exists a unique solution to (\ref{absee2}) for all $\mu\in \lbrack 0,1]$.

\textit{Step 2. The case of $\zeta$ taking values in $V$.} Denote
\[\alpha(t)=\int_{(0,t]}\zeta(s)dF(s),\ t\in [0,T]\]
and
\[\bar p(t)=p(t)+\alpha(t),\ t\in [0,T]\ \text{and} \ \bar p(t)=0,\ t\in (T,T+\delta].\] Then we can rewrite \eqref{absee-0} as
\[
\left\{
\begin{aligned} \bar{p}(t)=&\,\, \xi+ \alpha(T)+\int_t^T\Big\{\mathcal{M}(s)\bar{p}(s)-\mathcal{M}(s)\alpha(s)+\mathcal{N}(s)q(s)\\ &+\mathbb{E}^{\mathcal{F}_{s}}\big[g(s,\bar{p}(s)-\alpha(s),q(s),\bar{p}_{\delta _{+}}(s)-\alpha_{\delta _{+}}(s),q_{\delta _{+}}(s))\big]\Big\}ds \\ & -\int_t^Tq(s)dw(s),\,\,t\in \lbrack 0,T], \\ \bar{p}(t)=&0,\,q(t)=0,\,\,t\in (T,T+\delta ], \end{aligned}\right.
\]
By Step 1,  we know that the above equation admits a unique  solution $(\bar{p}(\cdot),q(\cdot))\in L_{\mathbb{F}
}^{2}(0,T+\delta;V)\times L_{\mathbb{F}}^{2}(0,T+\delta;\mathcal{L}_{2}^{0}
)$. Then it is easy to check that $(\bar{p}(\cdot)-\alpha(\cdot),q(\cdot))\in
L_{\mathbb{F}}^{2}(0,T+\delta;V)\times L_{\mathbb{F}}^{2}(0,T+\delta
;\mathcal{L}_{2}^{0})$ is a unique solution to (\ref{absee-0}).

\textit{Step 3. The case of $\zeta$ taking values in $H$.} Consider the following approximation equations, for $n\ge 1$,
\begin{equation}
\left\{
\begin{aligned} p^n(t)=&\,\, \xi+\int_{(t,T]}\zeta^n(s)dF(s)+\int_t^T\Big\{\mathcal{M}(s)p^n(s)+\mathcal{N}(s)q^n(s)\\ &+\mathbb{E}^{\mathcal{F}_{s}}\big[g(s,p^n(s),q^n(s),p^n_{\delta _{+}}(s),q^n_{\delta _{+}}(s))\big]\Big\}ds\\ & -\int_t^Tq^n(s)dw(s),\,\,t\in \lbrack 0,T], \\ p^n(t)=&\,\, 0,\,q^n(t)=0,\,\,t\in (T,T+\delta ], \end{aligned}\right.
\label{absee-01}%
\end{equation}
where $\zeta^{n}$ converges to $\zeta$ in $L_{\mathbb{F},F}^{2}(0,T;H)$ as $n$ goes to infinity. By
Step 2, for each $n$, ABSEE (\ref{absee-01}) has a unique solution $(p^{n},q^{n})\in L_{\mathbb{F}}^{2}(0,T+\delta;V)\times L_{\mathbb{F}}^{2}(0,T+\delta;\mathcal{L}_{2}^{0})$.
According to (\ref{bes-difference}) in Theorem~\ref{es-p}, we have
\begin{align*}
&  \mathbb{E}\big[\sup\limits_{0\leq t\leq T}\Vert p^{n}(t)-p^{m}(t)\Vert
_{H}^{2}\big]+\mathbb{E}\int_{0}^{T}\Vert q^{n}(t)-q^{m}(t)\Vert
_{\mathcal{L}_{2}^{0}}^{2}dt+\mathbb{E}\int_{0}^{T}\Vert p^{n}(t)-p^{m}%
(t)\Vert_{V}^{2}dt\\
&  \leq C\bigg\{\mathbb{E}\int_{(0,T]}\Vert\zeta^{n}(s)-\zeta^{m}(s)\Vert
_{ H}^{2}\Delta F(s)dF(s)+\mathbb{E}\Big(\int_{(0,T]}\Vert\zeta^{n}%
(s)-\zeta^{m}(s)\Vert_{H}d|F|(s)\Big)^{2}\bigg\}\\
&  \leq CK_{F}\mathbb{E}\int_{(0,T]}\Vert\zeta^{n}(t)-\zeta^{m}(t)\Vert
_{H}^{2}d|F|_v(s),
\end{align*}
where the constant $K_F$ is from Assumption (B5). Hence, $p^{n}$ is a Cauchy sequence in both $L_{\mathbb{F}}^{2}(0,T;V)$ and
$D_{\mathbb{F}}^{2}(0,T;H)$ with limit denoted by $p$, and $q^{n}$ is Cauchy sequence in
$L_{\mathbb{F}}^{2}(0,T;\mathcal{L}_{2}^{0})$ with limit denoted by $q.$

Finally,  we deduce that  $(p,q)$ satisfies (\ref{absee-0}) by combining the following estimates:  for each $t\in [0,T]$, as $n\to \infty$, 
\begin{align*}
&\mathbb{E}\Vert p^{n}(t)-p(t)\Vert_{H}^{2}   \rightarrow0,\\
&\mathbb{E}\Big\Vert\int_{(t,T]}\zeta^{n}(s)dF(s)-\int_{(t,T]}\zeta(s)dF(s)\Big\Vert
_{H}^{2}   \leq K_{F}\mathbb{E}\int_{(t,T]}\Vert\zeta^{n}(s)-\zeta
(s)\Vert_{H}^{2}d|F|_v(s)\rightarrow0,\\
&\mathbb{E}\Big\Vert\int_{t}^{T}(\mathcal{M}(s)p^{n}(s)-\mathcal{M}(s)p(s))ds\Big\Vert
_{V^{\ast}}^{2}\notag\\
&\leq T\mathbb{E}\int_{t}^{T}\Vert\mathcal{M}(s)p^{n}(s)-\mathcal{M}(s)p(s)\Vert_{V^{\ast}}^{2}ds \leq TK_{1}\mathbb{E}\int_{t}
^{T}\Vert p^{n}(s)-p(s)\Vert_{V}^{2}ds\rightarrow0,\\
 &\mathbb{E}\Big\Vert\int_{t}^{T}g(s,p^{n}(s),q^{n}(s),p_{\delta_{+}}^{n}(s),q_{\delta_{+}}^{n}(s))ds-\int_{t}^{T}g(s,p(s),q(s),p_{\delta_{+}}(s),q_{\delta_{+}}(s))ds\Big\Vert_{H}^{2} \notag\\
&\leq T\mathbb{E}\int_{t}^{T}\Vert
g(s,p^{n}(s),q^{n}(s),p_{\delta_{+}}^{n}(s),q_{\delta_{+}}^{n}%
(s))-g(s,p(s),q(s),p_{\delta_{+}}(s),q_{\delta_{+}}(s))\Vert_{H}^{2}ds\notag\\
& \leq C\mathbb{E}\int_{t}^{T}\Big[\Vert p^{n}(s)-p(s)\Vert_{V}^{2}+\Vert q^{n}(s)-q(s)\Vert_{\mathcal{L}_{2}^{0}}^{2}\Big]ds\rightarrow0,\\
&\mathbb{E}\Big\Vert\int_{t}^{T}(\mathcal{N}(s)q^{n}(s)-\mathcal{N}(s)q(s))ds\Big\Vert
_{V^{\ast}}^{2} \notag \\
&\leq T\mathbb{E}\int_{t}^{T}\Vert\mathcal{N}(s)q^{n}
(s)-\mathcal{N}(s)q(s)\Vert_{V^{\ast}}^{2}ds \leq C\mathbb{E}\int_{t}^{T}\Vert
q^{n}(s)-q(s)\Vert_{\mathcal{L}_{2}^{0}}^{2}ds\rightarrow0,\\
& \mathbb{E}\Big\Vert\int_{t}^{T}q^{n}(s)dw(s)-\int_{t}^{T}q(s)dw(s)\Big\Vert_{H}%
^{2}=\mathbb{E}\int_{t}^{T}\Vert q^{n}(s)-q(s)\Vert_{\mathcal{L}_{2}^{0}}^{2}ds\rightarrow0.
\end{align*}
The proof is concluded. 
 \end{proof}

\begin{remark}
Considering the ABSEE \eqref{absee-0}, when $H=V=\mathbb{R}^n$, $m$ is a finite \emph{regular} measure, $dF(s)$ induces a finite \emph{regular} measure, and the function $g$ is linear, the equation \eqref{absee-0} reduces to the ABSDE studied in \cite[Theorem 2.4]{guatteri2021stochastic} where the well-posedness was obtained. 
\end{remark}

{

\begin{remark}
We present some possible extensions of the result by adapting   the proof of Theorem~\ref{ABSEE-thm} properly.

a). The process  $\alpha(t):=\int_{(0,t]}\zeta(s)dF(s)$ can be extended to a  semimartingales, i.e., $\alpha(t):=\int_{(0,t]}\zeta(s)dF(s)+M(t)$ where $\zeta$ anb $F$ satisfy the same conditions as in Theorem \ref{ABSEE-thm}, and $M(t)$ is an $H$-valued square-integrable martingale (not necessarily continuous).

b). When adding a new term $\zeta$ to the  ABSEE in Step 2 of the proof, we first consider the case of $V$-valued process $\zeta$, as  the operator $\mathcal{M}$ is defined only on $V$.  On the other hand, instead of extending $\zeta$ from the space $V$ to the space $H$ as done in Step 3,   we may consider  the ABSEE with a general $V$-valued process  $\alpha\in L_{\mathbb{F}}^{1,2}(0,T;V)$ with $\alpha(T)\in L^{2}(\mathcal{F}_T;H)$ in Step 2. Similarly, if we assume the domain of $\mathcal{M}$ is $H$, we may consider  the equation  with a general $H$-valued process $\alpha\in L_{\mathbb{F}}^{1,2}(0,T;H)$  with $\alpha(T)\in L^{2}(\mathcal{F}_T;H)$ in Step 2. 
   

    c). In the results of a priori estimation  (Theorem \ref{es-p}) and well-posedness (Theorem \ref{ABSEE-thm}), the solution $(p,q)$ of ABSEE can be nonzero after $T,$ i.e., $p(t)=\tilde \xi(t),q(t)=\eta(t)$, for $t>T,$ for some $\tilde \xi,\eta\in L_{\mathcal{F}}^{2}(T,T+\delta;H)$ as in \cite{10py}. 
\end{remark}

\section{Stochastic maximum principle}\label{sec:SMP}

In this section, we study  recursive optimal control problems for a class of infinite dimensional delayed systems and derive the Pontryagin's stochastic maximum principle.

\subsection{Formulation of the control problem}

Suppose that the control domain $U$ is a convex subset of a real separable
Hilbert space $H_{1}$. We identify $H_{1}$ with its dual space.  Consider
the following controlled SEE with delay 
\begin{equation}\label{state}
\left\{ 
\begin{aligned}
dx(t)=& \,\,[A(t)x(t)+b(t,x(t),x_{\delta }(t),u(t),u_{\delta }(t))]dt \\
& +[B(t)x(t)+\sigma (t,x(t),x_{\delta }(t),u(t),u_{\delta }(t))]dw(t), \\
x(t)=& \,\,x_{0}(t),\,\,u(t)=u_0(t),\,\,\,t\in \lbrack -\delta ,0].
\end{aligned}%
\right.  
\end{equation}%
Here, $x_{0}(\cdot )\in L^{2}([-\delta ,0];V)\cap C([-\delta ,0];H)$ and $u_0\in L^{2}([-\delta ,0];U)$ are given initial paths, 
\begin{equation*}
(A,B):[0,T]\times \Omega \rightarrow L(V;V^{\ast}\times \mathcal{L}_{2}^{0})
\end{equation*}
are unbounded linear operators, 
\begin{equation*}
(b,\sigma ):[0,T]\times \Omega \times H\times H\times U\times
H_{1}\rightarrow H\times H
\end{equation*}%
are nonlinear functions, and for a given finite measure $m$ on $[-\delta ,0]$,
recall that according to (\ref{Myeq3-3}), 
\begin{equation}\label{e-delay}
x_{\delta }(t)=\int_{-\delta }^{0}x(t+s)m(ds),\,\,u_{\delta
}(t)=\int_{-\delta }^{0}u(t+s)m(ds).
\end{equation}

\begin{remark}
\label{Exa-1} We list some examples of delay in the literature that can be described by \eqref{e-delay}. 

\begin{itemize}
\item[(i)] The \emph{pointwise delay} (see \cite{10cw})  $x_{\delta }(t)=x(t-\delta )\,,$\ $u_{\delta }(t)=u(t-\delta )$ is given by \eqref{e-delay} when  $m(ds)$ is the Dirac measure at $\delta$. 

\item[(ii)]  Let $m(ds)=e^{\lambda s}ds$, where $
\lambda \in \mathbb{R}$ is the averaging parameter. Then 
\begin{equation*}
x_{\delta }(t)=\int_{-\delta }^{0}x(t+s)e^{\lambda s}ds,\text{ }u_{\delta
}(t)=\int_{-\delta }^{0}u(t+s)e^{\lambda s}ds
\end{equation*}
provides the \emph{moving average delay} (see \cite{11osz}). 

\item[(iii)] The \emph{multipoint-wise delay}  is given by \eqref{e-delay}  with the measure $m(ds)$ defined by 
\begin{equation*}
m(A)=\sum_{k=1}^{n}\alpha _{k}m_{k}(A),\text{\quad for } A\in \mathcal{B}([-\delta ,0]),
\end{equation*}
where  $\alpha _{k}$ are positive  constants and  $m_{k}(ds)$ are Dirac measures at $\delta _{k}\in \lbrack -\delta ,0], 1\leq k\leq n$.

\item[(iv)]  The delay measure of the form $m(ds)=f(s)ds$ for some integrable function $f\ge 0$
on $[-\delta ,0]$. 
\end{itemize}
\end{remark}

The cost functional is defined by 
\begin{equation*}
J(u(\cdot )):=y(0),
\end{equation*}
where  $(y,z)$ solves the following  ABSDE
\begin{equation}  \label{y}
\left\{ 
\begin{aligned}
-dy(t)=&\,\,\mathbb{E}^{\mathcal{F}_{t}} \Big[f(t,x(t),x_{\delta }(t),y(t),y_{\delta
_{+}}(t),z(t),z_{\delta _{+}}(t),u(t),u_{\delta }(t))\Big]dt \\
& \quad -z(t)dw(t),\,\,t\in \lbrack 0,T], \\
y(T) =&\,\,h\left(x(T),x_{\delta,\nu}(T)\right),\text{ }y(t)=0,\text{ }z(t)=0,\text{ }t\in (T,T+\delta ].
\end{aligned}%
\right.
\end{equation}
In \eqref{y}, $y_{\delta_+(t)}$ and $z_{\delta_+}(t)$ are defined by \eqref{e:eta-delta+},  the coefficient functions
\begin{equation*}
h:\Omega \times H\times H\rightarrow \mathbb{R}\text{ and }f:[0,T]\times \Omega
\times H\times H\times \mathbb{R}\times \mathbb{R}\times \mathcal L_2^0(K,H)\times \mathcal L_2^0(K,H)\times U\times
H_{1}\rightarrow \mathbb{R},
\end{equation*}
 and  $x_{\delta, \nu}(t)$ is defined in a similar way as in \eqref{Myeq3-3} and \eqref{e:eta-delta+}: 
\begin{equation}\label{e:x=dv}
x_{\delta,\nu}(t):=\int_{-\delta}^{0}x(t+s)\nu(ds),
\end{equation}
where $\nu$ is a finite measure on $[-\delta ,0]$.

The admissible control set $\mathcal U$ is defined by, for some given $u_0\in L^{2}([-\delta ,0];U)$,  
\begin{equation*}
\mathcal{U}:=\Big\{u:[-\delta ,T]\times \Omega \rightarrow U\ \text{satisfying }%
u\in L_{\mathbb{F}}^{2}([0,T];U)\ \text{and}\ u(t)=u_0(t),\,t\in \lbrack
-\delta ,0]\Big\}.
\end{equation*}
We aim to find conditions (i.e., the maximum principle) that are necessary to obtain an optimal control $\bar u$, i.e., an admissible control $\bar{u}(\cdot)$ that minimizes the cost functional $J(u(\cdot ))$ over $\mathcal{U}$.

Assume the following conditions are satisfied.

\begin{itemize}
\item[(\textbf{H1})] $b(\cdot ,\cdot ,0,0,0,0)\in L_{\mathbb{F}}^{2}(0,T;H)$%
, $\sigma (\cdot ,\cdot ,0,0,0,0)\in L_{\mathbb{F}}^{2}(0,T;\mathcal{L}%
_{2}^{0}).$

\item[(\textbf{H2})] The operators $A$ and $B$ satisfy the coercivity and
boundedness conditions (A2)-(A3).

\item[(\textbf{H3})] For each $(x,x_{\delta },u,u_{\delta }),$ the functions 
$b(\cdot ,\cdot ,x,x_{\delta },u,u_{\delta })$ and $\sigma (\cdot ,\cdot
,x,x_{\delta },u,u_{\delta })$ are progressively measurable. For almost all $%
(t,\omega )\in \lbrack 0,T]\times \Omega $, $b$ and $\sigma $ are Fr\'{e}%
chet differentiable with respect to $(x,x_{\delta },u,u_{\delta })$ with
continuous and uniformly bounded derivatives.

\item[(\textbf{H4})] For each $(x,x_{\delta },y,y_{\delta +},z,z_{\delta
+},u,u_{\delta }),$ $f(\cdot ,\cdot ,x,x_{\delta },y,y_{\delta
+},z,z_{\delta +},u,u_{\delta })$ is progressively measurable and $h(\cdot
,x,x_{\delta,\nu})$ is $\mathcal{F}_{T}$-measurable.$\ f$ and $h$ are Fr\'{e}chet
differentiable with continuous derivatives. Moreover, $f_{y},f_{y_{\delta_+}},f_{z},f_{z_{\delta_+}}$ are
bounded and there exists a constant $C$ such that  
\begin{align*}
& \Vert f_{x}(t,x,x_{\delta },y,y_{\delta
+},z,z_{\delta
+},u,u_{\delta })\Vert _{H}+\Vert
f_{x_{\delta }}(t,x,x_{\delta },y,y_{\delta
+},z,y_{\delta
+},u,u_{\delta })\Vert _{H} \\
& +\Vert f_{u}(t,x,x_{\delta },y,y_{\delta
+},z,z_{\delta
+},u,u_{\delta })\Vert _{H_{1}}+\Vert
f_{u_{\delta }}(t,x,x_{\delta },y,y_{\delta
+},z,y_{\delta
+},u,u_{\delta })\Vert _{H_{1}} \\
& \leq C\big(1+\Vert x\Vert _{H}+\Vert x_{\delta }\Vert _{H}+\Vert u\Vert
_{H_{1}}+\Vert u_{\delta }\Vert _{H_{1}}\big)
\end{align*}%
and 
\begin{equation*}
\Vert h_{x}(x,x_{\delta,\nu})\Vert _{H}+h_{x_{\delta,\nu}}(x,x_{\delta,\nu})\Vert _{H}\leq C(1+\Vert x\Vert _{H}+\Vert x_{\delta,\nu}\Vert _{H}).
\end{equation*}
hold for almost all $(t,\omega )\in
\lbrack 0,T]\times \Omega $, $(x,x_{\delta },y,y_{\delta_+},z,z_{\delta_+},u,u_{\delta })\in H\times H\times \R\times\R\times \mathcal L_2^0(K;\R)\times \mathcal L_2^0(K;\R)\times H_1\times H_1$, and $(x,x_{\delta,\nu})\in H\times H$.
\end{itemize}
 The Hamiltonian  $H:\lbrack
0,T]\times \Omega \times H\times H\times \mathbb{R}\times \mathbb{R}\times\mathcal L_2^0(K;\R)\times\mathcal L_2^0(K;\R)\times
U\times H_{1}\times H\times \mathcal{L}_{2}^{0}\times \mathbb{R}\to \R$ is defined as follows 
\begin{equation}\label{Hamiltionian}
\begin{aligned}
 H(t,x,x_{\delta },y,y_{\delta
+},z,z_{\delta
+},u,u_{\delta },p,q,k):=&\left\langle b(t,x,x_{\delta
},u,u_{\delta }),p\right\rangle _{H}+\left\langle \sigma (t,x,x_{\delta
},u,u_{\delta }),q\right\rangle _{\mathcal{L}_{2}^{0}}\\
& -f(t,x,x_{\delta },y,y_{\delta
+},z,z_{\delta
+},u,u_{\delta })k, 
\end{aligned}
\end{equation}
where $k(\cdot )$ is the solution of the adjoint SDDE
associated with $y(\cdot )$:
\begin{equation}\label{k}
\begin{cases} 
dk(t)= [f_{y}(t)k(t)+(f_{y_{_{\delta +}}}k)_{\delta
}(t)]dt+[f_{z}(t)k(t)+(f_{z_{\delta _{+}}}k)_{\delta
}(t)]dw(t),\,\,t\in \lbrack 0,T], \\
k(0)= -1.
\end{cases}
\end{equation}
In \eqref{k}, recalling  \eqref{Myeq3-3} we have
\begin{align*}
(f_{y_{_{\delta +}}}k)_{\delta
}(t)=\int_{-\delta}^0f_{y_{\delta+}}(t+s)k(t+s)m(ds),\, 
(f_{z_{_{\delta +}}}k)_{\delta
}(t)=\int_{-\delta}^0f_{z_{\delta+}}(t+s)k(t+s)m(ds).
\end{align*}
\begin{remark}
    The SDDE (\ref{k}) is not of a standard form, but its well-posedness can be derived according to the same argument as the one in the proof of Theorem \ref{exu}.
\end{remark}

Assume that $\bar{u}(\cdot )\in \mathcal{U}$ is an optimal control, i.e., $J(%
\bar{u}(\cdot ))=\inf\limits_{u(\cdot )\in \mathcal{U}}J(u(\cdot ))$, and $(%
\bar{x}(\cdot ),\bar{y}(\cdot ),\bar{z}(\cdot ))$ are the corresponding
solutions of the system. For the sake of conciseness, we take the following notations:  for $\varphi =b,\sigma $ and $\tau =x,x_{\delta },y,y_{\delta_+},z,z_{\delta_+},u,u_{\delta }$,  
\begin{align*}
&\varphi (t) =\varphi (t,\bar{x}(t),\bar{x}_{\delta }(t),\bar{u}(t),\bar{u}%
_{\delta }(t)), \\
&\varphi _{\tau }(t) =\varphi _{\tau }(t,\bar{x}(t),\bar{x}_{\delta }(t),%
\bar{u}(t),\bar{u}_{\delta }(t)), \\
&f(t) =f(t,\bar{x}(t),\bar{x}_{\delta }(t),\bar{y}(t),\bar{z}(t),\bar{u}(t),%
\bar{u}_{\delta }(t)), \\
&f_{\tau }(t)=f_{\tau }(t,\bar{x}(t),\bar{x}_{\delta }(t),\bar{y}(t),\bar{y}_{\delta+}(t),\bar{z}%
(t),\bar{z}_{\delta+}(t),\bar{u}(t),\bar{u}_{\delta }(t)),\\
&h(T) =h(\bar{x}(T),\bar{x}_{\delta,\nu}(T)), \\
&h_{x }(T) =h_{x }(\bar{x}(T),\bar{x}_{\delta,\nu}(T)),\\
&h_{x_{\delta,\nu} }(T) =h_{x_{\delta,\nu}}(\bar{x}(T),\bar{x}_{\delta,\nu}(T)).
\end{align*}
To proceed with the dual analysis for the control system described by \eqref{state} and \eqref{y}, we introduce the following ABSEE with a datum acting on $[T-\delta,T]$ as the adjoint equation:  
\begin{equation}\label{p}
\left\{ 
\begin{aligned}
p(t)=& -h_{x}(T)k(T)-\int_{I_t}\mathbb{E}^{\mathcal{F}_{s}}%
\big[k(T) h_{x_{\delta,\nu}}(T)\big]\nu(d(s-T))\\
&+\int_t^T\Bigg\{A^{\ast }(s)p(s)+B^{\ast }(s)q(s)+b_{x}^{\ast }(s)p(s)+\sigma
_{x}^{\ast }(s)q(s)-f_{x}(s)k(s) \\
& \hspace{2em}+\mathbb{E}^{\mathcal{F}_{s}}\bigg[\int_{-\delta }^{0}b_{x_{\delta }}^{\ast
}(s-r)p(s-r)m(dr)+\int_{-\delta }^{0}\sigma _{x_{\delta }}^{\ast
}(s-r)q(s-r)m(dr) \\
& \hspace{5.5em}{-}\int_{-\delta }^{0}f_{x_{\delta }}(s-r)k(s-r)m(dr)\bigg]
\Bigg\}ds-\int_t^Tq(s)dw(s),\,\,t\in \lbrack 0,T], \\
p(t)=& 0,\,q(t)=0,\,\,t\in (T,T+\delta ],
\end{aligned}%
\right.  
\end{equation}
where $I_t:=(t,T]\cap [T-\delta,T]$.
According to Theorem \ref{ABSEE-thm}, this equation admits a unique solution $
(p(\cdot ),q(\cdot ))\in L_{\mathbb{F}}^{2}(0,T+\delta ;V)\times L_{\mathbb{F}}^{2}(0,T+\delta ;\mathcal{L}_{2}^{0})$ by taking $\zeta(t)=-\mathbb{E}^{\mathcal{F}_{t}}[k(T) h_{x_{\delta,\nu}}(T)]$ and $dF(t)=\nu(d(t-T)) I_{[T-\delta,T]}(t)$ (or equivalently $\zeta(t)=\mathbb{E}^{\mathcal{F}_{t}}[k(T) h_{x_{\delta,\nu}}(T)]$ and $dF(t)=-\nu(d(t-T)) I_{[T-\delta,T]}(t)$).

 Denoting, for $\tau =x,x_{\delta },y,y_{\delta+},z,z_{\delta+},u,u_{\delta }$, 
 \begin{equation}\label{e:H-H-tau}
\begin{aligned}
H(t)& =H(t,\bar{x}(t),\bar{x}_{\delta }(t),\bar{y}(t),\bar{y}_{\delta+}(t),\bar{z}(t),\bar{z}_{\delta+}(t),\bar{u}(t),
\bar{u}_{\delta }(t),p(t),q(t),k(t)), \\
H_{\tau }(t)& =H_{\tau }(t,\bar{x}(t),\bar{x}_{\delta }(t),\bar{y}(t),\bar{y}_{\delta+}(t),\bar{z}%
(t),\bar{z}_{\delta+}(t),\bar{u}(t),\bar{u}_{\delta }(t),p(t),q(t),k(t)),
\end{aligned}
\end{equation}
 the adjoint equation \eqref{p} can be written as
\begin{equation} \label{ade-1}
\left\{ 
\begin{aligned}
p(t)=& \,\,-h_{x}(\bar{x}(T))k(T)-\int_{I_t}\mathbb{E}^{\mathcal{F}_{s}}
\big[k(T) h_{x_{\delta,\nu}}(T)\big]\nu(d(s-T))\\
&\quad  +\int_t^T\bigg\{A^{\ast }(s)p(s)+B^{\ast }(s)q(s)+H_{x}(s)+\mathbb{E}^{\mathcal{F}_{s}}\Big[\int_{-\delta }^{0}H_{x_{\delta }}(s-r)m(dr)\Big]\bigg\}
ds\\&\qquad  -\int_t^Tq(s)dw(s),\,\,t\in \lbrack 0,T], \\
p(t)=& \,\,0,\,q(t)=0,\,\,t\in (T,T+\delta ].
\end{aligned}
\right. 
\end{equation}%

\subsection{Variational equations}

Let $\bar u(\cdot)\in \mathcal U$ be an optimal control and $(\bar x(\cdot), \bar y(\cdot), \bar z(\cdot))$ be the corresponding solutions.  For $u(\cdot )\in \mathcal{U}$, $\rho \in \lbrack 0,1]$, define the 
perturbation of $\bar u(\cdot)$ by
\begin{equation*}
u^{\rho }(\cdot )=\bar{u}(\cdot )+\rho (u(\cdot )-\bar{u}(\cdot )).
\end{equation*}
The convexity of $U$ yields that $u^{\rho }(\cdot )\in \mathcal{U}$. Let $%
(x^{\rho }(\cdot ),y^{\rho }(\cdot ),z^{\rho }(\cdot ))$ be the
corresponding solutions of \eqref{state} and \eqref{y} associated with $%
u^{\rho }(\cdot )$.

Consider  the following variational equations \eqref{e-hat-x} and \eqref{e-hat-yz} along the optimal pair $(\bar{x}(\cdot ),\bar{u}(\cdot ))$: 
\begin{equation}\label{e-hat-x}
\left\{ 
\begin{aligned}
d\hat{x}(t)=&\,\, \Big\{A(t)\hat{x}(t)+b_{x}(t)\hat{x}(t)+b_{x_{\delta }}(t)\hat{%
x}_{\delta }(t)+b_{u}(t)(u(t)-\bar{u}(t)) \\
& \hspace{1em}+b_{u_{\delta }}(t)(u_{\delta }(t)-\bar{u}_{\delta }(t))\Big\}%
dt \\
& +\Big\{B(t)\hat{x}(t)+\sigma _{x}(t)\hat{x}(t)+\sigma _{x_{\delta }}(t)%
\hat{x}_{\delta }(t)+\sigma _{u}(t)(u(t)-\bar{u}(t)) \\
& \hspace{1em}+\sigma _{u_{\delta }}(t)(u_{\delta }(t)-\bar{u}_{\delta }(t))%
\Big\}dw(t),\,\,t\in \lbrack 0,T], \\
\hat{x}(t)=&\,\, 0,\,\,t\in \lbrack -\delta ,0],
\end{aligned}%
\right.
\end{equation}
\begin{equation}\label{e-hat-yz}
\left\{ 
\begin{aligned}
-d\hat{y}(t)=
&\,\,\mathbb{E}^{\mathcal{F}_{t}}\Big[\left\langle f_{x}(t),\hat{x}(t)\right\rangle
_{H}+\left\langle f_{x_{\delta }}(t),\hat{x}_{\delta }(t)\right\rangle
_{H}+f_{y}(t)\hat{y}(t)+f_{y_{_{\delta+}}}(t)\hat {y}_{\delta _{+}}(t)\\
&\hspace{1em}+\la f_{z}(t),\hat{z}(t)\ra_{\mathcal L_2^0(K;\R)} +\big< f_{z_{_{\delta
			+}}}(t),\hat {z}_{\delta _{+}}(t)\big>_{\mathcal L_2^0(K;\R)}\\
&\hspace{1em}+\left\langle f_{u}(t),u(t)-\bar{u}(t)\right\rangle _{H_{1}}+\left\langle
f_{u_{\delta }}(t),u_{\delta }(t)-\bar{u}_{\delta }(t)\right\rangle _{H_{1}}
\Big]dt \\
& -\hat{z}(t)dw(t),\,\,t\in \lbrack 0,T],\\
\hat{y}(T)= & \,\,\langle h_{x}(T),\hat{x}(T)\rangle _{H}+
\langle h_{x_{\delta,\nu}}(T),\hat
{x}_{\delta,\nu}(T)\rangle_H,\\
\hat{y}(t)=&\,\, 0,\,\hat{z}(t)=0,\,\,t\in (T,T+\delta ],
\end{aligned}
\right. 
\end{equation}
where we recall notations given by \eqref{Myeq3-3}, \eqref{e:eta-delta+} and \eqref{e:x=dv} for $x_{\delta}, y_{\delta_+}$ and $x_{\delta, \nu}$ respectively.

As $\rho$ goes to 0,  $u^\rho$ converges to $\bar u$,  and  formal calculations suggest that  $x^\rho$ (resp. $(y^\rho, z^\rho)$) converges to $\bar x$ (resp. $(\bar y, \bar z)$)  and $(x^\rho-\bar x)/\rho$ (resp. $ ((y^\rho-\bar y)/\rho, (z^\rho-\bar z)/\rho)$) converges to the solution $\hat x$ of  \eqref{e-hat-x} (resp. to the solution $(\hat y, \hat z)$ of \eqref{e-hat-yz}). This is justified by  Lemmas \ref{Myle4-1} and \ref{Le4-3} below. 
\begin{lemma}
\label{Myle4-1} Let (H1)-(H4) be satisfied. Then, we have, as $\rho\to 0$,
\begin{align*}
& \mathbb{E}\Big[ \sup\limits_{0\leq t\leq T}\Vert x^{\rho }(t)-\bar{x}%
(t)\Vert _{H}^{2}\Big] +\mathbb{E}\int_{0}^{T}\Vert x^{\rho }(t)-\bar{x}%
(t)\Vert _{V}^{2}dt=O(\rho ^{2}), \\
& \mathbb{E}\Big[ \sup\limits_{0\leq t\leq T}\Vert x^{\rho }(t)-\bar{x}%
(t)-\rho \hat{x}(t)\Vert _{H}^{2}\Big] +\E\int_0^T\Vert x^{\rho }(t)-\bar{x}%
(t)-\rho \hat{x}(t)\Vert _{V}^{2}dt=o(\rho ^{2}).
\end{align*}
\end{lemma}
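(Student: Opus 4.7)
The plan is to apply the a priori estimate \eqref{es-difference} in Theorem \ref{estimate} twice. First I compare the perturbed state $x^\rho$ with the optimal state $\bar{x}$ directly, obtaining the $O(\rho^2)$ bound. Second, I derive an SDEE for the second-order error $\zeta^\rho := x^\rho - \bar{x} - \rho \hat{x}$ by a first-order Taylor expansion of $b$ and $\sigma$ around $(\bar{x},\bar{x}_\delta,\bar{u},\bar{u}_\delta)$, and apply \eqref{es-x} with zero initial path to extract the $o(\rho^2)$ estimate.

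For the first assertion, observe that $x^\rho$ and $\bar{x}$ solve \eqref{sdee} with the same initial path $x_0$ but driving pairs $(b(\cdot,\cdot,\cdot,u^\rho,u^\rho_\delta), \sigma(\cdot,\cdot,\cdot,u^\rho,u^\rho_\delta))$ and $(b(\cdot,\cdot,\cdot,\bar{u},\bar{u}_\delta), \sigma(\cdot,\cdot,\cdot,\bar{u},\bar{u}_\delta))$, respectively. Estimate \eqref{es-difference} yields
\begin{align*}
&\mathbb{E}\Big[\sup_{0\leq t\leq T}\|x^\rho-\bar{x}\|_H^2\Big] + \mathbb{E}\int_0^T \|x^\rho-\bar{x}\|_V^2 dt \\
&\quad \leq C\,\mathbb{E}\int_0^T \Big(\|b(t,\bar{x},\bar{x}_\delta,u^\rho,u^\rho_\delta)-b(t)\|_H^2 + \|\sigma(t,\bar{x},\bar{x}_\delta,u^\rho,u^\rho_\delta)-\sigma(t)\|_{\mathcal{L}_2^0}^2\Big) dt.
\end{align*}
By (H3), $b$ and $\sigma$ are Lipschitz in $(u,u_\delta)$ with uniform constant; using $u^\rho-\bar{u}=\rho(u-\bar{u})$ together with Lemma \ref{Le3-2} applied to $u-\bar{u}$, the right-hand side is bounded by $C\rho^2\,\mathbb{E}\int_{-\delta}^T\|u(t)-\bar{u}(t)\|_{H_1}^2 dt = O(\rho^2)$.

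For the second assertion, set $\tilde{x}^\rho := x^\rho-\bar{x}$ and introduce the averaged derivatives, for $\tau=x,x_\delta,u,u_\delta$,
\[
b^\rho_\tau(t) := \int_0^1 b_\tau\bigl(t,\bar{x}+\lambda\tilde{x}^\rho,\bar{x}_\delta+\lambda\tilde{x}^\rho_\delta,\bar{u}+\lambda(u^\rho-\bar{u}),\bar{u}_\delta+\lambda(u^\rho_\delta-\bar{u}_\delta)\bigr) d\lambda,
\]
and analogously $\sigma^\rho_\tau(t)$. Subtracting the variational equation scaled by $\rho$ from the equation for $\tilde{x}^\rho$ and applying the fundamental theorem of calculus, one checks that $\zeta^\rho$ solves \eqref{sdee} with zero initial path, drift nonlinearity $b^\rho_x(t)\zeta^\rho + b^\rho_{x_\delta}(t)\zeta^\rho_\delta + \rho R^b(t)$ and diffusion nonlinearity $\sigma^\rho_x(t)\zeta^\rho + \sigma^\rho_{x_\delta}(t)\zeta^\rho_\delta + \rho R^\sigma(t)$, where
\[
R^b(t) := (b^\rho_x-b_x)(t)\hat{x} + (b^\rho_{x_\delta}-b_{x_\delta})(t)\hat{x}_\delta + (b^\rho_u-b_u)(t)(u-\bar{u}) + (b^\rho_{u_\delta}-b_{u_\delta})(t)(u_\delta-\bar{u}_\delta),
\]
and $R^\sigma$ is defined analogously. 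Since the Lipschitz constants $\|b^\rho_x\|,\|b^\rho_{x_\delta}\|,\|\sigma^\rho_x\|,\|\sigma^\rho_{x_\delta}\|$ are uniformly bounded in $\rho$ by (H3), estimate \eqref{es-x} gives
\[
\mathbb{E}\Big[\sup_{0\leq t\leq T}\|\zeta^\rho(t)\|_H^2\Big] \leq C\rho^2\,\mathbb{E}\int_0^T \bigl(\|R^b(t)\|_H^2 + \|R^\sigma(t)\|_{\mathcal{L}_2^0}^2\bigr) dt,
\]
with $C$ independent of $\rho$. To conclude $o(\rho^2)$ it suffices to show the integral on the right tends to $0$ as $\rho\to 0$: the first assertion together with Lemma \ref{Le3-2} yields $\tilde{x}^\rho\to 0$ in $L^2_{\mathcal{F}}(-\delta,T;H)$, hence along a subsequence a.s.\ pointwise in $(t,\omega)$; the continuity of $b_\tau,\sigma_\tau$ from (H3) then forces $(b^\rho_\tau-b_\tau)(t)\to 0$ pointwise; and the integrands of $R^b,R^\sigma$ are dominated by a fixed element of $L^1(dt\,d\mathbb{P})$ thanks to the uniform boundedness of the derivatives together with $\hat{x}\in L^2_{\mathcal{F}}(0,T;H)$ and $u-\bar{u}\in L^2_{\mathcal{F}}(0,T;H_1)$. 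Dominated convergence along the subsequence, combined with the standard subsequence-of-subsequences argument, finishes the proof.

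The principal technical subtlety is controlling the delayed arguments $\zeta^\rho_\delta$ (in the Gronwall step for the equation of $\zeta^\rho$) and $\hat{x}_\delta, (u-\bar{u})_\delta$ (in the forcing term) in $H$-norm, uniformly in $\rho$. Lemma \ref{Le3-2} is exactly the tool that achieves this, converting $L^2$-in-time bounds on delayed processes into $L^2$-bounds of the processes themselves on $[-\delta,T]$ with constant $m([-\delta,0])^2$ independent of $\rho$, which is what allows both the Lipschitz absorption in Part 1 and the dominated-convergence step in Part 2 to go through.
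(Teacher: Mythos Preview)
Your proof is correct and follows essentially the same route as the paper: both parts rest on the a priori estimate of Theorem~\ref{estimate}, the mean-value representation of $b(t,x^\rho,\ldots)-b(t,\bar x,\ldots)$ via averaged derivatives, and dominated convergence for the remainder. The only cosmetic difference is that the paper works with the normalized error $\tilde x^\rho=(x^\rho-\bar x)/\rho-\hat x$ instead of your $\zeta^\rho=\rho\tilde x^\rho$, and you are slightly more explicit about the uniform-in-$\rho$ constant and the subsequence argument for dominated convergence.
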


\begin{proof}
We have%
\begin{equation*}
\left\{ 
\begin{aligned}
d(x^{\rho }(t)-\bar{x}(t))=& \,\,\Big\{A(t)(x^{\rho }(t)-\bar{x}(t))+\tilde{b}%
_{x}(t)(x^{\rho }(t)-\bar{x}(t))+\tilde{b}_{x_{\delta }}(t)(x_{\delta
}^{\rho }(t)-\bar{x}_{\delta }(t)) \\
& \hspace{1em}+\tilde{b}_{u}(t)\rho (u(t)-\bar{u}(t))+\tilde{b}_{u_{\delta }}(t)\rho
(u_{\delta }(t)-\bar{u}_{\delta }(t))\Big\}dt \\
& \hspace{0.5em}+\Big\{B(t)(x^{\rho }(t)-\bar{x}(t))+\tilde{\sigma}_{x}(t)(x^{\rho }(t)-%
\bar{x}(t))+\tilde{\sigma}_{x_{\delta }}(t)(x_{\delta }^{\rho }(t)-\bar{x}%
_{\delta }(t)) \\
&\hspace{1em} +\tilde{\sigma}_{u}(t)\rho (u(t)-\bar{u}(t))+\tilde{\sigma}_{u_{\delta
}}(t)\rho (u_{\delta }(t)-\bar{u}_{\delta }(t))\Big\}dw(t),\quad t\in
\lbrack 0,T], \\
x^\rho (t)-\bar x(t)=& \,\,0,\quad t\in \lbrack -\delta ,0],
\end{aligned}%
\right.
\end{equation*}%
where, for $\varphi =b,\sigma $ and $\tau =x,x_{\delta },u,u_{\delta }$,%
\begin{align*}
\tilde{\varphi}_{\tau }(t)=& \int_{0}^{1}\varphi _{\tau }\Big(t,\bar{x}
(t)+\lambda (x^{\rho }(t)-\bar{x}(t)),\bar{x}_{\delta }(t)+\lambda
(x_{\delta }^{\rho }(t)-\bar{x}_{\delta }(t)), \\
& \hspace{4em}\bar{u}(t)+\lambda \rho (u(t)-\bar{u}(t)),\bar{u}_{\delta
}(t)+\lambda \rho (u_{\delta }(t)-\bar{u}_{\delta }(t))\Big)d\lambda.
\end{align*}%
By Theorem \ref{estimate} and Lemma \ref{Le3-2}, we derive that 
\begin{align*}
& \mathbb{E}\Big[\sup\limits_{0\leq t\leq T}\Vert x^{\rho }(t)-\bar{x}%
(t)\Vert _{H}^{2}\Big]+\mathbb{E}\int_{0}^{T}\Vert x^{\rho }(t)-\bar{x}%
(t)\Vert _{V}^{2}dt \\
& \leq C\Big\{\mathbb{E}\int_{0}^{T}\Vert u^{\rho }(t)-\bar{u}(t)\Vert
_{H_{1}}^{2}dt+\mathbb{E}\int_{0}^{T}\Vert u_{\delta }^{\rho }(t)-\bar{u}%
_{\delta }(t)\Vert _{H_{1}}^{2}dt\Big\} \\
& \leq C\mathbb{E}\int_{0}^{T}\Vert u^{\rho }(t)-\bar{u}(t)\Vert
_{H_{1}}^{2}dt  \leq C\rho ^{2},
\end{align*}
and the first inequality follows.

Setting $\tilde{x}^{\rho }(t)=\frac{x^{\rho }(t)-\bar{x}(t)}{\rho }-\hat{x}%
(t)$, we have
\begin{equation}
\left\{ 
\begin{aligned}
d\tilde{x}^{\rho }(t)=&\,\, \Big\{A(t)\tilde{x}^{\rho }(t)+\tilde{b}_{x}(t)%
\tilde{x}^{\rho }(t)+[\tilde{b}_{x}(t)-b_{x}(t)]\hat{x}(t)+\tilde{b}%
_{x_{\delta }}(t)\tilde{x}_{\delta }^{\rho }(t) \\
& \hspace{1em}+[\tilde{b}_{x_{\delta }}(t)-b_{x_{\delta }}(t)]\hat{x}%
_{\delta }(t)+[\tilde{b}_{u}(t)-b_{u}(t)](u(t)-\bar{u}(t)) \\
& \hspace{1em}+[\tilde{b}_{u_{\delta }}(t)-b_{u_{\delta }}(t)](u_{\delta
}(t)-\bar{u}_{\delta }(t))\Big\}dt \\
& +\Big\{B(t)\tilde{x}^{\rho }(t)+\tilde{\sigma}_{x}(t)\tilde{x}^{\rho }(t)+[%
\tilde{\sigma}_{x}(t)-\sigma _{x}(t)]\hat{x}(t)+\sigma _{x_{\delta
}}^{\lambda }(t)\tilde{x}_{\delta }^{\rho }(t) \\
& \hspace{1em}+[\tilde{\sigma}_{x_{\delta }}(t)-\sigma _{x_{\delta }}(t)]%
\hat{x}_{\delta }(t)+[\tilde{\sigma}_{u}(t)-\sigma _{u}(t)](u(t)-\bar{u}(t))
\\
& \hspace{1em}+[\tilde{\sigma}_{u_{\delta }}(t)-\sigma _{u_{\delta
}}(t)](u_{\delta }(t)-\bar{u}_{\delta }(t))\Big\}dw(t), \\
\tilde{x}^{\rho }(t)=&\,\, 0,\,\,t\in \lbrack -\delta ,0].
\end{aligned}%
\right.
\end{equation}%
Utilizing the a prior estimate \eqref{es-x}, we have 
\begin{align*}
& \mathbb{E}\Big[\sup\limits_{0\leq t\leq T}\big\| \tilde{x}^{\rho }(t)\Vert
_{H}^{2}\Big]+\E\int_0^T\Vert \tilde{x}^{\rho }(t)\Vert _{V}^{2}dt\leq C\mathbb{E}\int_{0}^{T}\bigg\{\Big\| \lbrack \tilde{b}_{x}(t)-b_{x}(t)]\hat{x}(t)+[\tilde{b}_{x_{\delta }}(t)-b_{x_{\delta }}(t)]\hat{x}_{\delta }(t) \\
& \hspace{4.5em}+[\tilde{b}_{u}(t)-b_{u}(t)](u(t)-\bar{u}(t))+[\tilde{b}_{u_{\delta }}(t)-b_{u_{\delta }}(t)](u_{\delta }(t)-\bar{u}_{\delta}(t))\Big\|_{H}^{2} \\
& \hspace{4em}+\Big\| \lbrack \tilde{\sigma}_{x}(t)-\sigma _{x}(t)]\hat{x}%
(t)+[\tilde{\sigma}_{x_{\delta }}(t)-\sigma _{x_{\delta }}(t)]\hat{x}%
_{\delta }(t) \\
& \hspace{4.5em}+[\tilde{\sigma}_{u}(t)-\sigma _{u}(t)](u(t)-\bar{u}(t))+[%
\tilde{\sigma}_{u_{\delta }}(t)-\sigma _{u_{\delta }}(t)](u_{\delta }(t)-%
\bar{u}_{\delta }(t))\Big\|_{H}^{2}\bigg\}dt.
\end{align*}%
Then it follows from the dominated convergence theorem that 
\begin{equation*}
\lim\limits_{\rho \rightarrow 0}\mathbb{E}\Big[\sup\limits_{0\leq t\leq
T}\Vert \tilde{x}^{\rho }(t)\Vert _{H}^{2}\Big]+\lim\limits_{\rho \rightarrow 0}\E\int_0^T\Vert \tilde{x}^{\rho }(t)\Vert _{V}^{2}dt=0,
\end{equation*}
and the second inequality follows. 
\end{proof}

\begin{lemma}
\label{Le4-3} Assume (H1)-(H4) hold. Then
\begin{equation*}
\mathbb{E}\Big[\sup\limits_{0\leq t\leq T}|y^{\rho }(t)-\bar{y}(t)-\rho 
\hat{y}(t)| ^{2}\Big]+\mathbb{E}\int_{0}^{T}\|z^{\rho }(t)-\bar{z}(t)-\rho 
\hat{z}(t)\|^{2}_{\mathcal L_2^0(K,\R)}dt =o(\rho ^{2}).
\end{equation*}
\end{lemma}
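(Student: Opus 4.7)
The plan is to set
\[
\tilde y^\rho(t) := \rho^{-1}(y^\rho(t) - \bar y(t)) - \hat y(t), \qquad \tilde z^\rho(t) := \rho^{-1}(z^\rho(t) - \bar z(t)) - \hat z(t),
\]
and to prove $\E[\sup_{0 \le t \le T} |\tilde y^\rho(t)|^2] + \E\int_0^T \|\tilde z^\rho(t)\|_{\mathcal L_2^0(K,\R)}^2 dt \to 0$ as $\rho \to 0$; multiplying by $\rho^2$ then yields the stated $o(\rho^2)$ bound. The argument parallels the forward step of Lemma \ref{Myle4-1}, now executed on the anticipated BSDE side.

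First I would apply the parameterization identity
\[
f(t, x^\rho, x_\delta^\rho, y^\rho, y_{\delta_+}^\rho, z^\rho, z_{\delta_+}^\rho, u^\rho, u_\delta^\rho) - f(t, \bar x, \bar x_\delta, \bar y, \bar y_{\delta_+}, \bar z, \bar z_{\delta_+}, \bar u, \bar u_\delta) = \sum_\tau \tilde f_\tau(t)\, \Delta_\tau(t),
\]
with $\tilde f_\tau(t) := \int_0^1 f_\tau(t, \bar x + \lambda(x^\rho - \bar x), \ldots)\, d\lambda$ and $\Delta_\tau$ the corresponding perturbation, and subtract the variational equation for $\hat y$. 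This gives an ABSDE for $(\tilde y^\rho, \tilde z^\rho)$ whose generator is
\[
\tilde f_y \tilde y^\rho + \tilde f_{y_{\delta_+}} \tilde y_{\delta_+}^\rho + \la \tilde f_z, \tilde z^\rho\ra + \la \tilde f_{z_{\delta_+}}, \tilde z_{\delta_+}^\rho\ra + R^\rho(t),
\]
with residual
\[
R^\rho(t) = \tilde f_x(t) \tilde x^\rho(t) + \tilde f_{x_\delta}(t) \tilde x_\delta^\rho(t) + \sum_\tau (\tilde f_\tau - f_\tau)(t) \eta_\tau(t),
\]
where $\tilde x^\rho := \rho^{-1}(x^\rho - \bar x) - \hat x$, the sum runs over $\tau \in \{x, x_\delta, y, y_{\delta_+}, z, z_{\delta_+}, u, u_\delta\}$, and $\eta_\tau$ is the corresponding variational object ($\hat x, \hat x_\delta, \hat y, \hat y_{\delta_+}, \hat z, \hat z_{\delta_+}, u - \bar u, u_\delta - \bar u_\delta$). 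The terminal value decomposes analogously as
\[
\tilde y^\rho(T) = \la h_x(T), \tilde x^\rho(T)\ra + \la h_{x_{\delta,\nu}}(T), \tilde x_{\delta,\nu}^\rho(T)\ra + \la (\tilde h_x - h_x)(T), \hat x(T)\ra + \la (\tilde h_{x_{\delta,\nu}} - h_{x_{\delta,\nu}})(T), \hat x_{\delta,\nu}(T)\ra,
\]
with $\tilde y^\rho \equiv 0$ and $\tilde z^\rho \equiv 0$ on $(T, T+\delta]$.

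Next I would invoke the a priori estimate for real-valued anticipated BSDEs (the scalar specialization of Theorem \ref{es-p} with $\mathcal M = \mathcal N = 0$ and $\zeta \equiv 0$, essentially the Peng--Yang estimate \cite{10py}, whose $m$-shifted integrals are absorbed by Lemma \ref{Le3-3}). Using the boundedness of $\tilde f_y, \tilde f_{y_{\delta_+}}, \tilde f_z, \tilde f_{z_{\delta_+}}$ from (H4), this gives
\[
\E\sup_t |\tilde y^\rho(t)|^2 + \E \int_0^T \|\tilde z^\rho(t)\|_{\mathcal L_2^0(K,\R)}^2 dt \le C\, \E|\tilde y^\rho(T)|^2 + C\, \E\Big(\int_0^T |R^\rho(t)|\, dt\Big)^2.
\]
Lemma \ref{Myle4-1} controls the $\tilde f_x \tilde x^\rho$ and $\tilde f_{x_\delta} \tilde x_\delta^\rho$ contributions (via Lemma \ref{Le3-2} for the latter), while the pieces $(\tilde f_\tau - f_\tau)\eta_\tau$ vanish in $L^2$ by continuity of $f_\tau$ from (H3)--(H4), the a.s.\ convergence of the perturbed variables to their baseline counterparts, the growth bound in (H4), and dominated convergence, with $L^2$-domination supplied by $\eta_\tau \in L^2$. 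The terminal-side terms are handled identically, using $\E\|\tilde x^\rho(T)\|_H^2 = o(1)$ from Lemma \ref{Myle4-1} together with the growth bound on $h_x, h_{x_{\delta,\nu}}$.

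The only delicate point is handling the anticipated variables $y_{\delta_+}, z_{\delta_+}$ inside the a priori estimate, since they produce $m$-shifted $L^2$-integrals on the right-hand side. Lemma \ref{Le3-3} provides exactly the comparison needed to close the Gronwall step, and because $\tilde y^\rho, \tilde z^\rho$ vanish on $(T, T+\delta]$ there is no boundary contribution. Combining all of the above, the right-hand side of the displayed estimate is $o(1)$, which yields the desired $o(\rho^2)$ bound after multiplying through by $\rho^2$.
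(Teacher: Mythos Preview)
Your proposal is correct and follows essentially the same route as the paper: normalize by $\rho$, write the ABSDE for $(\tilde y^\rho,\tilde z^\rho)$ via the $\int_0^1$-parameterized derivatives $\tilde f_\tau$, apply an a priori estimate with the $\tilde y^\rho,\tilde y^\rho_{\delta_+},\tilde z^\rho,\tilde z^\rho_{\delta_+}$ terms absorbed into the Lipschitz part, and conclude by Lemma~\ref{Myle4-1} plus dominated convergence. The only cosmetic differences are that the paper quotes the estimate in the $L^2(dt\times dP)$ form rather than your $L^{1,2}$ form, and that in the terminal decomposition the coefficient of $\tilde x^\rho(T)$ should be $\tilde h_x(T)$ rather than $h_x(T)$ (harmless for the limit, since both satisfy the growth bound in (H4) and $\tilde x^\rho(T)\to 0$).
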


\begin{proof}
It suffices to prove that 
\begin{equation*}
\lim\limits_{\rho \rightarrow 0}\mathbb{E}\left[ \sup\limits_{0\leq t\leq
T}\left\vert \frac{y^{\rho }(t)-\bar{y}(t)}{\rho }-\hat{y}(t)\right\vert ^{2}%
\right] +\lim\limits_{\rho \rightarrow 0}\mathbb{E}\int_{0}^{T}\|\frac{z^{\rho }(t)-\bar{z}(t)}{\rho}-
\hat{z}(t)\|^{2}_{\mathcal L_2^0(K,\R)}dt=0.
\end{equation*}%
Set $$\tilde{y}^{\rho }(t)=\frac{y^{\rho }(t)-\bar{y}(t)}{\rho }-\hat{y}(t)\ \text{and}\ \tilde{z}^{\rho }(t)=\frac{z^{\rho }(t)-\bar{z}(t)}{\rho }-\hat{z}(t).$$
Then 
\begin{equation}
\left\{ 
\begin{aligned}
-d\tilde{y}^{\rho }(t)=& \,\,\Big\{\big<\tilde{f}_{x}(t),\tilde{%
x}^{\rho }(t)\big>_{H}+\big<\tilde{f}_{x}(t)-f_{x}(t),\hat{x}(t)\big>_{H}+%
\big<\tilde{f}_{x_{\delta }}(t),\tilde{x}_{\delta }^{\rho }(t)\big>_{H} \\
& \hspace{0.5em}+\big<\tilde{f}_{x_{\delta }}(t)-f_{x_{\delta }}(t),\hat{x}%
_{\delta }(t)\big>_{H}+\tilde{f}_{y}(t)\tilde{y}^{\rho }(t)+[\tilde{f}%
_{y}(t)-f_{y}(t)]\hat{y}(t) \\
&\hspace{0.5em}+\tilde f_{y_{\delta+}}(t)\tilde y_{\delta+}^\rho(t)+[\tilde{f}
_{y_{\delta+}}(t)-f_{y_{\delta+}}(t)]\hat{y}_{\delta+}(t)+\big<\tilde{f}_{z}(t),\tilde{z}^{\rho }(t)\big>_{\mathcal L_2^0(K;\R)}\\
& \hspace{0.5em}+\big<\tilde{f}%
_{z}(t)-f_{z}(t),\hat{z}(t)\big>_{\mathcal L_2^0(K;\R)}+\big<\tilde{f}_{z_{\delta+}}(t),\tilde{z}_{\delta+}^{\rho }(t)\big>_{\mathcal L_2^0(K;\R)}\\&\hspace{0.5em}+\big<\tilde{f}%
_{z_{\delta+}}(t)-f_{z_{\delta+}}(t),\hat{z_{\delta+}}(t)\big>_{\mathcal L_2^0(K;\R)}+\big<\tilde{f}_{u}(t)-f_{u}(t),u(t)-\bar{u}(t)%
\big>_{H_{1}} \\
& \hspace{0.5em}+\big<\tilde{f}_{u_{\delta }}(t)-f_{u_{\delta
}}(t),u_{\delta }(t)-\bar{u}_{\delta }(t)\big>_{H_{1}}\Big\}dt-\tilde{z}%
^{\rho }(t)dw(t),\,\,t\in \lbrack 0,T], \\
\tilde{y}^{\rho }(T)=& \,\,\big<\tilde{h}_{x}(T),\tilde{x}^{\rho }(T)\big>%
_{H}+\big<\tilde{h}_{x}(T)-h_{x}(\bar{x}(T)),\hat{x}(T)\big>_{H}\\&+\big<\tilde{h}_{x_{\delta,\nu}}(T),\tilde{x}_{\delta,\nu}^{\rho }(T)\big>%
_{H}+\big<\tilde{h}_{x_{\delta,\nu}}(T)-h_{x_{\delta,\nu}}(T),\hat{x}_{\delta,\nu}(T)\big>_{H}
\end{aligned}%
\right.
\end{equation}%
where%
\begin{align*}
\tilde{h}_{x}(T)=&\int_{0}^{1}h_{x}\big(\bar{x}(T)+\lambda (x^{\rho }(T)-\bar{%
x}(T)),\bar{x}_{\delta,\nu}(T)+\lambda (x_{\delta,\nu}^{\rho }(T)-\bar{%
x}_{\delta,\nu}(T))\big)d\lambda ,\\
\tilde{h}_{x_{\delta,\nu}}(T)=&\int_{0}^{1}h_{x_{\delta,\nu}}\big(\bar{x}(T)+\lambda (x^{\rho }(T)-\bar{%
x}(T)),\bar{x}_{\delta,\nu}(T)+\lambda (x_{\delta,\nu}^{\rho }(T)-\bar{%
x}_{\delta,\nu}(T))\big)d\lambda ,
\end{align*}%
and for $\tau =x,x_{\delta },y,y_{\delta+},z,z_{\delta+},u,u_{\delta }$, 
\begin{align*}
\tilde{f}_{\tau }(t)=& \int_{0}^{1}f_{\tau }\big(t,\bar{x}(t)+\lambda
(x^{\rho }(t)-\bar{x}(t)),\bar{x}_{\delta }(t)+\lambda (x_{\delta }^{\rho
}(t)-\bar{x}_{\delta }(t)), \\
& \hspace{3em}\bar{y}(t)+\lambda (y^{\rho }(t)-\bar{y}(t)),\bar{y}_{\delta +}(t)+\lambda (y^{\rho }_{\delta +}(t)-\bar{y}_{\delta +}(t)),\\&\hspace{3em}\bar{z}(t)+\lambda (z^{\rho }(t)-\bar{z}(t)),\bar{z}_{\delta+
}(t)+\lambda (z_{\delta+ }^{\rho }(t)-\bar{z}_{\delta +}(t)), \\
& \hspace{3em}\bar{u}(t)+\lambda \rho (u(t)-\bar{u}(t)),\bar{u}_{\delta
}(t)+\lambda \rho (u_{\delta }(t)-\bar{u}_{\delta }(t))\big)d\lambda .
\end{align*}%
From the a priori estimate of classical BSDEs, Lemma \ref{Myle4-1} and Lemma
\ref{Le3-2}, we have
\begin{align*}
& \mathbb{E}\Big[\sup\limits_{0\leq t\leq T}|\tilde{y}^{\rho }(t)|^{2}%
\Big]+\mathbb{E}\int_{0}^{T}\|\tilde{z}^{\rho }(t)\|^{2}_{\mathcal L_2^0(K,\R)}dt \\
& \leq C\bigg(\mathbb{E}\int_{0}^{T}\Big\{\Big|\big<\tilde{f}_{x}(t),\tilde{x%
}^{\rho }(t)\big>_{H}+\big<\tilde{f}_{x}(t)-f_{x}(t),\hat{x}(t)\big>_{H}+%
\big<\tilde{f}_{x_{\delta }}(t),\tilde{x}_{\delta }^{\rho }(t)\big>_{H}+\big<%
\tilde{f}_{x_{\delta }}(t)-f_{x_{\delta }}(t),\hat{x}_{\delta }(t)\big>_{H}
\\
& \hspace{5em}+[\tilde{f}_{y}(t)-f_{y}(t)]\hat{y}(t)+[\tilde{f}_{y_{\delta+}}(t)-f_{y_{\delta+}}(t)]\hat{y}_{\delta+}(t)+ \langle \tilde{f}
_{z}(t)-f_{z}(t),\hat{z}(t)\rangle_{\mathcal L_2^0(K;\R)}\\&\hspace{5em}+ \langle \tilde{f}
_{z_{\delta+}}(t)-f_{z_{\delta+}}(t),\hat{z}_{\delta+}(t)\rangle_{\mathcal L_2^0(K;\R)}+\big<\tilde{f}_{u}(t)-f_{u}(t),u(t)-\bar{u}(t)
\big>_{H_{1}} \\
& \hspace{5em}+\big<\tilde{f}_{u_{\delta }}(t)-f_{u_{\delta }}(t),u_{\delta
}(t)-\bar{u}_{\delta }(t)\big>_{H_{1}}\Big|^{2}\Big\}dt \\
& \hspace{3em}+\mathbb{E}\Big[\Vert \widetilde{x}^{\rho }(T)\Vert _{H}^{2}+
\big|\big<\tilde{h}_{x}(T)-h_{x}(\bar{x}(T)),\hat{x}(T)\big>_{H}\big|^{2}+\big|\big<\tilde{h}_{x_{\delta,\nu}}(T)-h_{x_{\delta,\nu}}(T),\hat{x}_{\delta,\nu}(T)\big>_{H}\big|^2
\Big]\bigg)\rightarrow 0,
\end{align*}%
as $\rho \rightarrow 0$. The proof is complete.
\end{proof}

\subsection{Maximum principle}

In this subsection, we prove the stochastic maximum principle.

\begin{theorem}
\label{SMP} Suppose that (H1)-(H4) hold. Let $\bar{u}(\cdot )$ be an optimal
control for the control problem, $\bar{x}(\cdot )$ and $(\bar{y}(\cdot ),%
\bar{z}(\cdot ))$ be the corresponding solutions to \eqref{state} and %
\eqref{y}, respectively. Assume that $(p(\cdot ),q(\cdot ))$ is the solution
of (\ref{ade-1}) with $k(\cdot )$ being the solution of $\eqref{k}$. Then 
\begin{equation}
\bigg<H_{u}(t)+\mathbb{E}^{\mathcal{F}_{t}}\bigg[\int_{-\delta
}^{0}H_{u_{\delta }}(t-s)m(ds)\bigg],u-\bar{u}(t)\bigg>_{H_{1}}\geq 0,
\label{ne}
\end{equation}%
holds for all $u\in U$ and $dt\times dP$-almost all  $(t,\omega)\in[0,T]\times \Omega$, where we recall the notations \eqref{Hamiltionian} and  \eqref{e:H-H-tau} for the Hamiltonian function $H$.
\end{theorem}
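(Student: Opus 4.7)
The plan is to exploit convex variation combined with a duality analysis between the variational system and the adjoint system $(p,q,k)$. Since $U$ is convex, $u^{\rho}=\bar u+\rho(u-\bar u)\in\mathcal U$ for every admissible $u$ and every $\rho\in[0,1]$, so optimality gives $y^{\rho}(0)\geq\bar y(0)$; dividing by $\rho>0$ and invoking Lemma \ref{Le4-3} yields $\hat y(0)\geq 0$. The remainder of the proof expresses $\hat y(0)$ in a form involving the variation of $H$ with respect to $(u,u_\delta)$.

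I first apply It\^o's formula to $k(t)\hat y(t)$ on $[0,T]$. The direct contributions $k f_y\hat y$ and the cross-variation $\langle kf_z,\hat z\rangle_{\mathcal L_2^0(K,\mathbb R)}$ cancel with their counterparts from $\hat y\,dk$. The anticipated terms $k f_{y_{\delta_+}}\hat y_{\delta_+}$ and $\langle kf_{z_{\delta_+}},\hat z_{\delta_+}\rangle$ cancel with $\hat y\,(f_{y_{\delta_+}}k)_\delta$ and $\langle\hat z,(f_{z_{\delta_+}}k)_\delta\rangle$ via a Fubini-type duality equality for the delay measure $m$, which uses that $\hat y=\hat z=0$ on $(T,T+\delta]$. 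With $k(0)=-1$, this produces
\[
\hat y(0)+\mathbb E[k(T)\hat y(T)]=-\mathbb E\int_0^T k\bigl\{\langle f_x,\hat x\rangle+\langle f_{x_\delta},\hat x_\delta\rangle+\langle f_u,u-\bar u\rangle+\langle f_{u_\delta},u_\delta-\bar u_\delta\rangle\bigr\}dt.
\]

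Next I apply Lemma \ref{Ito-lemma} to $\langle p(t),\hat x(t)\rangle_H$: here $p$ is c\`adl\`ag because the datum $-\int_{t\vee(T-\delta)}^T\mathbb E^{\mathcal F_s}[k(T)h_{x_{\delta,\nu}}(T)]\nu(d(s-T))$ generates a possibly singular BV component driven by $\nu$, while $\hat x$ is continuous. The pairings from $A,A^*,B,B^*$ cancel; the symmetric pairs $\langle b_x^*p,\hat x\rangle\leftrightarrow\langle p,b_x\hat x\rangle$, $\langle\sigma_x^*q,\hat x\rangle\leftrightarrow\langle q,\sigma_x\hat x\rangle$, and their $x_\delta$-analogues (matched through a second use of the delay-measure duality) all cancel. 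The BV integral against $\hat x$, computed via the substitution $t=T+s$ and the tower property, equals $\mathbb E[k(T)\langle h_{x_{\delta,\nu}}(T),\hat x_{\delta,\nu}(T)\rangle]$. Combined with $p(T)=-h_x(T)k(T)$ and the terminal $\hat y(T)=\langle h_x(T),\hat x(T)\rangle+\langle h_{x_{\delta,\nu}}(T),\hat x_{\delta,\nu}(T)\rangle$, the identity reads
\[
-\mathbb E[k(T)\hat y(T)]=\mathbb E\int_0^T\bigl\{k\langle f_x,\hat x\rangle+k\langle f_{x_\delta},\hat x_\delta\rangle+\langle p,b_u(u-\bar u)+b_{u_\delta}(u_\delta-\bar u_\delta)\rangle+\langle q,\sigma_u(u-\bar u)+\sigma_{u_\delta}(u_\delta-\bar u_\delta)\rangle\bigr\}dt.
\]

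Adding the two identities cancels the $k\langle f_x,\hat x\rangle$ and $k\langle f_{x_\delta},\hat x_\delta\rangle$ terms, and reassembling according to \eqref{Hamiltionian} gives $\hat y(0)=\mathbb E\int_0^T\{\langle H_u,u-\bar u\rangle_{H_1}+\langle H_{u_\delta},u_\delta-\bar u_\delta\rangle_{H_1}\}dt$. A final application of the delay-measure duality, using $u-\bar u\equiv 0$ on $[-\delta,0]$, transforms the $u_\delta$-integral into $\mathbb E\int_0^T\langle\mathbb E^{\mathcal F_t}[\int_{-\delta}^0 H_{u_\delta}(t-s)m(ds)],u-\bar u\rangle_{H_1}dt$. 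Since $\hat y(0)\geq 0$ for every $u\in\mathcal U$, the standard measurable selection argument (taking $u=\bar u+\mathbf 1_E(u_0-\bar u)$ for arbitrary $u_0\in U$ and $E\in\mathcal F_t$) yields the pointwise inequality \eqref{ne}. The most delicate step is the application of Lemma \ref{Ito-lemma} to $\langle p,\hat x\rangle$: correctly tracking the BV/jump contribution generated by $\nu$ through the datum on $[T-\delta,T]$ is precisely what produces the boundary term $k(T)\langle h_{x_{\delta,\nu}}(T),\hat x_{\delta,\nu}(T)\rangle$ that matches $\hat y(T)$, and this is the raison d'\^etre for formulating the adjoint as an ABSEE with datum acting on an interval.
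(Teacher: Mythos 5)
Your proposal is correct and follows essentially the same route as the paper: convex perturbation plus Lemma \ref{Le4-3} to get $\hat y(0)\ge 0$, It\^o's formula applied to $k(t)\hat y(t)$ and to $\langle p(t),\hat x(t)\rangle_H$ (with the jump/BV contribution of the $\nu$-datum producing exactly the boundary term $\mathbb E[k(T)\langle h_{x_{\delta,\nu}}(T),\hat x_{\delta,\nu}(T)\rangle_H]$), and repeated use of the delay-measure duality of Lemma \ref{Le3-1} to reassemble everything into $\hat y(0)=\mathbb E\int_0^T\langle H_u(t)+\mathbb E^{\mathcal F_t}[\int_{-\delta}^0 H_{u_\delta}(t-s)m(ds)],u(t)-\bar u(t)\rangle_{H_1}dt$. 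The only addition is your explicit measurable-selection step to pass from the integrated inequality to the pointwise one, which the paper leaves implicit.
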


The following result plays an important role in the derivation of the maximum principle.

\begin{lemma}
\label{Le3-1} Given a process $g\in L_{\mathbb{F}}^{2}(0,T+\delta ;E)$
satisfying $g(t)=0$ for $t\in (T,T+\delta ]$, and a process $\eta \in L_{%
\mathbb{F}}^{2}(-\delta ,T;E)$ satisfying $\eta (t)=0$ for $t\in \lbrack -\delta ,0)$, we have
\begin{equation}
\mathbb{E}\int_{0}^{T}\big<\eta _{\delta }(t),g(t)\big>_{E}dt=\mathbb{E}\int_{0}^{T}\big<\eta (t),\mathbb{E}^{\mathcal{F}_{t}}\big[g_{\delta _{+}}(t)\big]\big>_{E}dt.  \notag
\end{equation}
\end{lemma}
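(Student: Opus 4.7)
The plan is to prove the duality by a change of variables combined with Fubini's theorem. Starting from the left-hand side, I will unfold the definition of $\eta_\delta(t) = \int_{-\delta}^0 \eta(t+s)m(ds)$ and swap the order of integration:
\begin{equation*}
\mathbb{E}\int_{0}^{T}\langle \eta_\delta(t),g(t)\rangle_{E}\,dt
=\mathbb{E}\int_{-\delta}^{0}\int_{0}^{T}\langle \eta(t+s),g(t)\rangle_{E}\,dt\,m(ds).
\end{equation*}
The use of Fubini is legitimate because $\eta \in L^2_{\mathcal{F}}(-\delta,T;E)$, $g \in L^2_{\mathcal{F}}(0,T+\delta;E)$, and $m$ is finite, so by Cauchy--Schwarz the integrand is jointly integrable against $dt\,m(ds)\,dP$.

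Next, for each fixed $s\in[-\delta,0]$ I perform the change of variable $r=t+s$ in the inner integral to obtain $\int_{s}^{T+s}\langle \eta(r),g(r-s)\rangle_{E}\,dr$. Here is where the support hypotheses enter: since $s\le 0$, the interval $[s,0)$ lies inside the zero set of $\eta$, and since $r\in(T,T+s+\delta]$ gives $r-s\in(T-s,T+\delta]$... wait, more cleanly, for $r\in(T,T+s]$ we cannot be here because $s\le 0$ implies $T+s\le T$, so actually the "excess" on the right end is on $r\in (T+s,T]$ where $r-s>T$, and this lies in the zero set of $g$ (we need $r-s\le T$, i.e., $r\le T+s$, for $g$ to possibly be nonzero; conversely $g(r-s)=0$ when $r-s\in(T,T+\delta]$, i.e., $r\in(T+s,T+s+\delta]$, which contains $(T+s,T]$). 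Therefore the integration domain can be replaced by $[0,T]$ without changing the value of the integral. Swapping Fubini back and regrouping then yields
\begin{equation*}
\mathbb{E}\int_{0}^{T}\langle \eta(r),g_{\delta_+}(r)\rangle_{E}\,dr.
\end{equation*}

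Finally, since $\eta(r)$ is $\mathcal{F}_r$-measurable, the tower property of conditional expectation gives
\begin{equation*}
\mathbb{E}\langle \eta(r),g_{\delta_+}(r)\rangle_{E}=\mathbb{E}\langle \eta(r),\mathbb{E}^{\mathcal{F}_r}[g_{\delta_+}(r)]\rangle_{E},
\end{equation*}
and integrating over $r\in[0,T]$ concludes the identity. The only delicate point is the bookkeeping of the support of $\eta$ and $g$ after the change of variable, since the measure $m$ may place mass at the endpoints $\{-\delta\}$ or $\{0\}$; one must verify that the boundary shifts $[s,0)$ and $(T+s,T]$ contribute nothing to the integral regardless of whether $m$ is continuous or atomic. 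Otherwise the argument reduces to Fubini and an affine change of variable, which is routine.
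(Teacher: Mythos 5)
Your proposal is correct and follows essentially the same route as the paper's proof: Fubini, the affine change of variable $r=t+s$, use of the support conditions $\eta=0$ on $[-\delta,0)$ and $g=0$ on $(T,T+\delta]$ to replace the shifted domain $[s,T+s]$ by $[0,T]$, and the tower property to insert $\mathbb{E}^{\mathcal{F}_r}$. The paper organizes the domain adjustment by splitting the double integral into three pieces $I_1,I_2,I_3$ after swapping the order back, whereas you adjust the inner domain for each fixed $s$; this is only a cosmetic difference, and your worry about atoms of $m$ at the endpoints is harmless since the change of variable acts on the Lebesgue integral in $r$.
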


\begin{proof}
By \eqref{Myeq3-3} and the Fubini's theorem, we have
\begin{align*}
& \mathbb{E}\int_{0}^{T}\left\langle \eta _{\delta }(t),g(t)\right\rangle
_{E}dt =\mathbb{E}\int_{0}^{T}\Big<\int_{-\delta }^{0}\eta (t+s)m(ds),g(t)\Big>_{E}dt \\
& =\mathbb{E}\int_{0}^{T}\int_{-\delta }^{0}\big<g(t),\eta (t+s)\big>_{E}m(ds)dt 
 =\mathbb{E}\int_{-\delta }^{0}\int_{0}^{T}\big<g(t),\eta (t+s)\big>%
_{E}dtm(ds) \\
& =\mathbb{E}\int_{-\delta }^{0}\int_{s}^{T+s}\big<g(t-s),\eta (t)\big>_{E}dtm(ds),
\end{align*}
and further by Fubini's theorem again , we have
\begin{align*}
&\mathbb{E}\int_{-\delta }^{0}\int_{s}^{T+s}\big<g(t-s),\eta (t)\big>_{E}dtm(ds)\\
=&\mathbb{E}\int_{[-\delta ,0)}\int_{-\delta }^{t}\big<
g(t-s),\eta (t)\big>_{E}m(ds)dt +\mathbb{E}\int_{0}^{T-\delta }\int_{-\delta }^{0}\big<g(t-s),\eta (t)\big>_{E}m(ds)dt \\
& +\mathbb{E}\int_{(T-\delta ,T]}\int_{t-T}^{0}\big<g(t-s),\eta (t)\big>_{E}m(ds)dt =:I_{1}+I_{2}+I_{3}.
\end{align*}

Noting that $\eta (t)=0$ for $t<0$, we know that $I_{1}=0.$
Moreover, for $t\in (T-\delta ,T]$, if $s<t-T,$ then $T<t-s$, which implies $%
g(t-s)=0$ and thus, 
\begin{equation*}
\int_{\lbrack -\delta ,t-T)}\big<g(t-s),\eta (t)\big>_{E}m(ds)=0.
\end{equation*}
From this we get 
\begin{equation*}
I_{3}=\mathbb{E}\int_{(T-\delta ,T]}\int_{t-T}^{0}\big<g(t-s),\eta (t)\big>%
_{E}m(ds)dt=\mathbb{E}\int_{(T-\delta ,T]}\int_{-\delta }^{0}\big<%
g(t-s),\eta (t)\big>_{E}m(ds)dt=:I_3'.
\end{equation*}%
Therefore, we have
\begin{align*}
&\mathbb{E}\int_{0}^{T}\left\langle \eta _{\delta }(t),g(t)\right\rangle
_{E}dt= I_{2}+I_{3}=I_{2}+I'_{3} \\
=& \mathbb{E}\int_{0}^{T}\int_{-\delta }^{0}\big<g(t-s),\eta (t)\big>_{E}m(ds)dt 
= \mathbb{E}\int_{0}^{T}\bigg<\int_{-\delta }^{0}g(t-s)m(ds),\eta (t)\bigg>_{E}dt \\
=& \mathbb{E}\int_{0}^{T}\bigg<\mathbb{E}^{\mathcal{F}_{t}}\Big[\int_{-\delta}^{0}g(t-s)m(ds)\big],\eta (t)\bigg>_{E}dt=\mathbb{E}\int_{0}^{T}\big<\mathbb{E}^{\mathcal{F}_{t}}\big[g_{\delta _{+}}(t)\big], \eta (t)\big>_{E}dt,
\end{align*}
recalling the definition \eqref{e:eta-delta+} for $g_{\delta_+}$.
\end{proof}

\begin{proof}[Proof of Theorem \protect\ref{SMP}]
Recalling \eqref{ade-1} and \eqref{e-hat-x},  applying It\^{o}'s formula to $\left\langle p(t),\hat{x}(t)\right\rangle
_{H}$, and then taking expectation, we have 
\begin{align*}
& -\mathbb{E}\left[ k(T)\left\langle h_{x}(\bar{x}(T)),\hat{x}
(T)\right\rangle _{H}\right]-\E \int_{T-\delta}^T\big<h_{x_{\delta,\nu}}(T)k(T),\hat x(t)\big>_H\nu(d(t-T))\\
&=\mathbb{E}\int_{0}^{T}\bigg\{\big<b_{x_{\delta }}(t)\hat{x}_{\delta
}(t)+b_{u}(t)(u(t)-\bar{u}(t))+b_{u_{\delta }}(t)(u_{\delta }(t)-\bar{u}%
_{\delta }(t)),p(t)\big>_{H} \\
&\hspace{3em} -\bigg<\mathbb{E}^{\mathcal{F}_{t}}\Big[\int_{-\delta }^{0}b_{x_{\delta
}}^{\ast }(t-s)p(t-s)m(ds)+\int_{-\delta }^{0}\sigma _{x_{\delta }}^{\ast
}(t-s)q(t-s)m(ds) \\
&\hspace{7em} -\int_{-\delta }^{0}f_{x_{\delta }}(t-s)k(t-s)m(ds)\Big]-f_{x}(t)k(t),\hat{%
x}(t)\bigg>_{H} \\
& \hspace{3em}+\big<\sigma _{x_{\delta }}(t)\hat{x}_{\delta }(t)+\sigma _{u}(t)(u(t)-%
\bar{u}(t))+\sigma _{u_{\delta }}(t)(u_{\delta }(t)-\bar{u}_{\delta
}(t)),q(t)\big>_{{\mathcal L_2^0}}\bigg\}dt.
\end{align*}%
Noting  $\hat{x}(t) =0$ for  $t\in[-\delta, 0]$ and $p(t)=0$ for $t\in(T,T+\delta]$, we can apply Lemma \ref{Le3-1} and get
\begin{align*}
& \mathbb{E}\int_{0}^{T}\big<b_{x_{\delta }}(t)\hat{x}_{\delta }(t),p(t)\big>_{H}dt=\mathbb{E}\int_{0}^{T}\Big<\mathbb{E}^{\mathcal{F}_{t}}\Big[\int_{-\delta }^{0}b_{x_{\delta }}^{\ast }(t-s)p(t-s)m(ds)\Big],\hat{x}(t)\Big>_{H}dt, \\
& \mathbb{E}\int_{0}^{T}\big<\sigma _{x_{\delta }}(t)\hat{x}_{\delta
}(t),q(t)\big>_{\mathcal{L}_{2}^{0}}dt=\mathbb{E}\int_{0}^{T}\Big<\mathbb{E}^{\mathcal{F}_{t}}\Big[\int_{-\delta }^{0}\sigma _{x_{\delta }}^{\ast}(t-s)q(t-s)m(ds)\Big],\hat{x}(t)\Big>_{{ H}}dt, \\
& \mathbb{E}\int_{0}^{T}\big<f_{x_{\delta }}(t)k(t),\hat{x}_{\delta }(t)\big>%
_{H}dt=\mathbb{E}\int_{0}^{T}\Big<\mathbb{E}^{\mathcal{F}_{t}}\Big[%
\int_{-\delta }^{0}f_{x_{\delta }}(t-s)k(t-s)m(ds)\Big],\hat{x}(t)\Big>_{H}dt, \\
& \mathbb{E}\int_{0}^{T}\big<b_{u_{\delta }}(t)(u_{\delta }(t)-\bar{u}%
_{\delta }(t)),p(t)\big>_{H}dt \\
& \ \ \ \ =\mathbb{E}\int_{0}^{T}\Big<\mathbb{E}^{\mathcal{F}_{t}}\Big[%
\int_{-\delta }^{0}b_{u_{\delta }}^{\ast }(t-s)p(t-s)m(ds)\Big],u(t)-\bar{u}(t)\Big>_{H_{1}}dt, \\
& \mathbb{E}\int_{0}^{T}\big<\sigma _{u_{\delta }}(t)(u_{\delta }(t)-\bar{u}%
_{\delta }(t)),q(t)\big>_{\mathcal{L}_{2}^{0}}dt \\
& \ \ \ \ =\mathbb{E}\int_{0}^{T}\Big<\mathbb{E}^{\mathcal{F}_{t}}\Big[\int_{-\delta }^{0}\sigma _{u_{\delta }}^{\ast }(t-s)q(t-s)m(ds)\Big],u(t)-\bar{u}(t)\Big>_{H_{1}}dt.
\end{align*}%
Consequently, recalling the notation given by \eqref{e:x=dv} we have 
\begin{equation}
\begin{split}
& -\mathbb{E}\left[ k(T)\left\langle h_{x}(\bar{x}(T),\hat{x}
(T))\right\rangle _{H}\right]-\E [k(T) \langle h_{x_{\delta, \nu}}(T), \hat x_{\delta, \nu} (T)\rangle_H] \\ &=\mathbb{E}\int_{0}^{T}\Bigg\{\big<f_{x}(t)k(t),\hat{x}(t)\big>_{H}+\big<f_{x_{\delta }}(t)k(t),\hat{x}_{\delta
}(t)\big>_{H} +\Big<b_{u}^{\ast }(t)p(t)+\sigma _{u}^{\ast }(t)q(t)\\
&\quad +\mathbb{E}^{\mathcal{F}_{t}}\Big[\int_{-\delta }^{0}b_{u_{\delta }}^{\ast
}(t-s)p(t-s)m(ds) +\int_{-\delta }^{0}\sigma _{u_{\delta }}^{\ast
}(t-s)q(t-s)m(ds)\Big],u(t)-\bar{u}(t)\Big>_{H_{1}}\Bigg\}dt.
\end{split}
\label{eq:32}
\end{equation}
Applying It\^{o}'s formula to $k(t)\hat{y}(t)$ on $[0,T]$, we obtain by
Lemma \ref{Le3-1} that 
\begin{align}\label{eq:33}
 \hat{y}(0)&=-\mathbb{E}\int_{0}^{T}\Big\{\left\langle f_{x}(t),\hat{x}(t)\right\rangle _{H}+\left\langle f_{x_{\delta }}(t),\hat{x}_{\delta
}(t)\right\rangle _{H}+\left\langle f_{u}(t),u(t)-\bar{u}(t)\right\rangle
_{H_{1}} \nonumber\\
& \hspace{5em}+\left\langle f_{u_{\delta }}(t),u_{\delta }(t)-\bar{u}%
_{\delta }(t)\right\rangle _{H_{1}}\Big\}k(t)dt-\mathbb{E}%
[k(T)\hat y(T)] \nonumber \\
& =-\mathbb{E}\int_{0}^{T}\Big\{\left\langle f_{x}(t),\hat{x}%
(t)\right\rangle _{H}+\left\langle f_{x_{\delta }}(t),\hat{x}_{\delta
}(t)\right\rangle _{H}+\left\langle f_{u}(t),u(t)-\bar{u}(t)\right\rangle
_{H_{1}}\Big\}k(t)dt \\
& \hspace{1em}-\mathbb{E}\int_{0}^{T}\Big<\mathbb{E}^{\mathcal{F}_{t}}\int_{-\delta
}^{0}f_{u_{\delta }}(t-s)k(t-s)m(ds),u(t)-\bar{u}(t)\Big>_{H_{1}}dt\nonumber\\&\hspace{1em}-\mathbb{E}[k(T)\left\langle h_{x}(\bar{x}(T),\hat{x}(T))\right\rangle _{H}]-\mathbb{E}[k(T)\big<h_{x_{\delta,\nu}}(T),\hat{x}_{\delta,\nu}(T))\big> _{H}].\nonumber 
\end{align}
Then, it follows form combining \eqref{eq:32} with \eqref{eq:33} that 
\begin{align}\label{Myeq4-11}
\hat{y}(0)=& \mathbb{E}\int_{0}^{T}\big<b_{u}^{\ast }(t)p(t)+\sigma
_{u}^{\ast }(t)q(t)-f_{u}(t)k(t)\nonumber \\
& \hspace{1em}+\mathbb{E}^{\mathcal{F}_{t}}\big[\int_{-\delta
}^{0}b_{u_{\delta }}^{\ast }(t-s)p(t-s)m(ds)+\int_{-\delta }^{0}\sigma
_{u_{\delta }}^{\ast }(t-s)q(t-s)m(ds) \\
& \hspace{3em}-\int_{-\delta }^{0}f_{u_{\delta }}(t-s)k(t-s)m(ds)\big],u(t)-%
\bar{u}(t)\big>_{H_{1}}dt.\nonumber
\end{align}
On the other hand, by Lemma \ref{Le4-3} and the optimization assumption on $\bar{u}$, we know that
\[0\leq J(u^{\rho }(\cdot ))-J(\bar{u}(\cdot
))=\rho \hat{y}(0)+o(\rho ).\] 
This together with (\ref{Myeq4-11})  implies 
\begin{equation*}
\hat{y}(0)=\mathbb{E}\left[ \int_{0}^{T}\Big<H_{u}(t)+\mathbb{E}^{\mathcal{F}_{t}}\Big[\int_{-\delta }^{0}H_{u_{\delta }}(t-s)m(ds)\Big],u(t)-\bar{u}(t)\Big>_{H_{1}}dt\right] \geq 0,
\end{equation*}
from which  we obtain the maximum principle \eqref{ne}. 
\end{proof}
\begin{remark}
 In the proof above, $p(t)$ may not be continuous while $\hat{x}(t)$ is. So the possible jumps of $p$ do not contribute when applying  It\^{o}'s formula to $\left\langle p(t),\hat{x}(t)\right\rangle_{H}$.
\end{remark}

\begin{remark}
\label{Extension Rem} In view of the results on SEEs and ABSDEs established in the preceding sections, some straightforward adaptions of the proof of  Theorem \ref{SMP} shall yield a variety of extensions. We list some possible directions below. 

\begin{itemize}
\item[(i)] The delay measure $m(ds)$ appearing in the SEE \eqref{EQ-1} and the ABSDE \eqref{Myeq2-17} can be distinct. More precisely, the state equation may take the form
\begin{equation*}
\begin{cases}
& \displaystyle dx(t)= \left[A(t)x(t)+b(t,x(t),\int_{-\delta
}^{0}x(t+s)m_{1}(ds),u(t),\int_{-\delta }^{0}u(t+s)m_{2}(ds))\right]dt \\ 
& \ \ \displaystyle +\left[B(t)x(t)+\sigma (t,x(t),\int_{-\delta
}^{0}x(t+s)m_{3}(ds),u(t),\int_{-\delta }^{0}u(t+s)m_{4}(ds))\right]dw(t),\text{ }%
t\in \lbrack 0,T], \\ 
& x(t)= x_{0}(t),\,\,u(t)=v(t),\,\,\,t\in \lbrack -\delta ,0],%
\end{cases}
\end{equation*}%
and the cost functional $J(u(\cdot )):=y(0)$ is   determined by the ABSDE 
\begin{equation*}
	\left\{
	\begin{aligned}
		-dy(t)= &\,\, \mathbb E^{\mathcal F_t}\Bigg[f\Bigg(t,x(t),\int_{-\delta
		}^{0}x(t+s)m_{5}(ds),y(t),\int_{-\delta
		}^{0}y(t-s)m_{6}(ds),z(t),\int_{-\delta
		}^{0}z(t-s)m_{7}(ds),\\
		&\qquad \qquad \quad  u(t),\int_{-\delta }^{0}u(t+s)m_{8}(ds)\Bigg)\Bigg] dt   -z(t)dw(t),\,\,t\in \lbrack 0,T], \\ 
		y(T)= &\,\,h\left(x(T),\int_{-\delta}^0x(T+s)\nu(ds)\right), \text{ }y(t)=0,\text{ }z(t)=0,\text{ }t\in (T,T+\delta ].
	\end{aligned}
	\right.
\end{equation*}%
where $m_{i},i=1,\cdots 8,$ are finite measures on $[-\delta ,0].$

\item[(ii)] The delay measure $m$ and $\nu$ can be vector-valued, i.e., they may take values in $\mathbb{R}^{d}$ with $d>1$. 

\item[(iii)] The delays in $x$ and $u$ defined by \eqref{Myeq3-3} can be inhomogeneous in time:
\begin{equation*}
\eta_{\delta }(t)=\int_{-\delta }^{0}\phi (t,s)\eta(t+s)m(ds),
\end{equation*}%
where $\phi (t,s):[0,T]\times \lbrack -\delta ,0]\rightarrow \mathbb{R}$ is
a bounded measurable function.
Correspondingly, the anticipation \eqref{e:eta-delta+} in $y$ and $z$ now given by 
\[\eta_{\delta_+}=\int_{-\delta}^0 \phi(t, s)\eta(t-s) m(ds),\]
and  the adjoint equation \eqref{ade-1} is 
\begin{equation*}
\left\{ 
\begin{aligned}
-dp(t)=& \Big\{A^{\ast }(t)p(t)+B^{\ast }(t)q(t)+H_{x}(t)+\mathbb{E}^{%
\mathcal{F}_{t}}\Big[\int_{-\delta }^{0}\phi (t,s)H_{x_{\delta }}(t-s)m(ds)\Big]\Big\}dt \\
& +\E^{\mathcal F_t}[k(T)h_{x_{\delta,\nu}(T)}] \nu(d(t-T))I_{[T-\delta, T](t)} dt  -q(t)dw(t),\,\,t\in \lbrack 0,T], \\
p(T)=& -h_{x}(\bar{x}(T))k(T), \\
p(t)=& 0,\,q(t)=0,\,\,t\in (T,T+\delta ].
\end{aligned}%
\right.
\end{equation*}%
Here $H$ is the Hamiltonian defined in \eqref{Hamiltionian} and the function 
$k$ is given as (\ref{k}). The stochastic maximum principle \eqref{ne} becomes
\begin{equation*}
\Big<H_{u}(t)+\mathbb{E}^{\mathcal{F}_{t}}\Big[\int_{-\delta }^{0}\phi
(t, s)H_{u_{\delta }}(t-s)m(ds)\Big],u-\bar{u}(t)\Big>_{H_{1}}\geq 0.
\end{equation*}

\item[(iv)] One can extend the  measure $m$  to a finite \emph{signed} measure and the measure $\nu$ to a process $F$  with finite variation satisfying $(B5)$ (i.e., $\nu(d(t-T)) I_{[T-\delta,T]}(t)$ is replaced by $dF(t)$).

\item[(v)] The solution $(y,z)$ of ABSDE (\ref{y})  can be nonzero after $T$. More generally, it can also depend on $\{x(t),t>T\}$, e.g., $y(t)=h_1(x(t),x_{\delta,\nu}(t)),z(t)=l(x(t),x_{\delta,\nu}(t))$, $t>T$.
\end{itemize}
\end{remark}

\subsection{Sufficient conditions}

In this subsection, we will show that the necessary condition \eqref{ne}  for an
optimal control  is also sufficient under some convexity conditions.

\begin{theorem}
\label{sufficient} Suppose that (H1)-(H4) hold. Let $\bar u(\cdot)\in 
\mathcal{U}$ and $\bar{x}(\cdot )$ and $(\bar{y}(\cdot ),\bar{z}(\cdot ))$
be the corresponding solutions of \eqref{state} and \eqref{y}, respectively.
Assume 
\begin{enumerate}
\item[(a)] $h(x,x_{\delta,\nu})$ is convex in $(x,x_{\delta,\nu})$;

\item[(b)] the Hamiltonian $H$ given in \eqref{Hamiltionian} is convex in $(x,x_{\delta},y,y_{\delta+},z,z_{\delta+},u,u_{\delta })$ for each $(t,\omega ,p,q,k)$;

\item[(c)] \eqref {ne} holds for all $u\in U$, a.e., a.s.;

\item[(d)] $k(T) \le  0$ a.s., where we recall that $k(t)$ is the solution of \eqref{k}.
\end{enumerate}
Then $\bar u(\cdot)$ is an optimal control.
\end{theorem}

\begin{remark}
Clearly,  if  \eqref{k} does not contain delay terms, i.e., $f_{y_{\delta_+}}=f_{z_{\delta_+}}=0$,    we have $k(T)\le 0$ a.s. noting that $k(0)=-1$.   We impose the condition $k(T)\le 0$, since the delay terms (in particular the delay in diffusion) may change the sign of the solution of linear equation (see, e.g, \cite[Example~3.3]{ymy08}). 



\end{remark}

\begin{proof}
For an arbitrarily chosen admissible control process $u(\cdot )\in \mathcal{U}$,
let $x^{u}(\cdot )$ and $(y^{u}(\cdot ),z^{u}(\cdot ))$ be the corresponding
solutions of \eqref{state} and \eqref{y}, respectively. We
denote, for $t\in \lbrack 0,T]$, 
\begin{align*}
b^{u}(t)=& b(t,x^{u}(t),x_{\delta }^{u}(t),u(t),u_{\delta }(t)), \\
\sigma ^{u}(t)=& \sigma (t,x^{u}(t),x_{\delta }^{u}(t),u(t),u_{\delta }(t)),
\\
f^{u}(t)=& f(t,x^{u}(t),x_{\delta }^{u}(t),y^{u}(t),y^{u}_{\delta+}(t),z^{u}(t),z_{\delta+}^{u}(t),u(t),u_{\delta
}(t)).
\end{align*}
Applying It\^{o}'s formula to $k(t)(y^{u}(t)-\bar{y}(t))$ and $\left\langle
p(t),x^{u}(t)-\bar{x}(t)\right\rangle _{H}$ on $[0,T]$,  we can derive by Lemma \ref{Le3-1} that 
\begin{align*}
& \mathbb{E}\big[k(T)(h(x^{u}(T),x^u_{\delta,\nu}(T))-h(\bar{x}(T),\bar {x}_{\delta,\nu}(T))\big]+y^{u}(0)-\bar{y}(0)\\&\hspace{1em}-\mathbb{E}\big[k(T)\left\langle h_{x}(T),x^{u}(T)-\bar{x}(T)\right\rangle _{H}\big]-\mathbb{E}\big[k(T)\big< h_{x_{\delta,\nu}}(T),x_{\delta,\nu}^{u}(T)-\bar{x}_{\delta,\nu}(T)\big> _{H}\big] \\
& =\mathbb{E}\int_{0}^{T}\bigg\{f_{y}(t)k(t)(y^{u}(t)-\bar{y}
(t))+(f_{y_{\delta+}}k)_{\delta}(t)(y^{u}(t)-\bar{y}%
(t))\\&\hspace{3.5em}+\big<f_{z}(t)k(t),z^{u}(t)-\bar{z}(t)\big>_{\mathcal L_2^0(K;\R)}+\big<(f_{z_{\delta+}}k)_{\delta}(t),z^{u}(t)-\bar{z}(t)\big>_{{ \mathcal L_2^0(K;\R)} }\\
& \hspace{3.5em}+\big<b^{u}(t)-b(t),p(t)\big>_{H}+\big<\sigma ^{u}(t)-\sigma
(t),q(t)\big>_{ \mathcal L_2^0}-(f^{u}(t)-f(t))k(t) \\
& \hspace{3.5em}-\Big<b_{x}^{\ast }(t)p(t)+\sigma _{x}^{\ast
}(t)q(t)-f_{x}(t)k(t)+\mathbb{E}^{\mathcal{F}_{t}}\Big[\int_{-\delta
}^{0}b_{x_{\delta }}^{\ast }(t-s)p(t-s)m(ds) \\
& \hspace{4em}+\int_{-\delta }^{0}\sigma _{x_{\delta }}^{\ast
}(t-s)q(t-s)m(ds)-\int_{-\delta }^{0}f_{x_{\delta }}(t-s)k(t-s)m(ds)\Big]
,x^{u}(t)-\bar{x}(t)\Big>_{H}\bigg\}dt \\
& =\mathbb{E}\int_{0}^{T}\bigg\{H^{u}(t)-H(t)-\Big<H_{x}(t)+\mathbb{E}^{\mathcal{F}_{t}}\Big[\int_{-\delta
}^{0}H_{x_{\delta }}(t-s)m(ds)\Big],x^{u}(t)-\bar{x}(t)\Big>_{H}\\
& \hspace{4em}-\Big(H_{y}(t)+\int_{-\delta}^0H_{y_{\delta+}}(t+s)m(ds)\Big)(y^{u}(t)-\bar{y}%
(t)) \\&\hspace{4em}-\Big<H_{z}(t)+\int_{-\delta}^0H_{z_{\delta+}}(t+s)m(ds),z^{u}(t)-\bar{z}(t)\Big>_{\mathcal L_2^0(K;\R)}\bigg\}dt \\
& =\mathbb{E}\int_{0}^{T}\bigg\{H^{u}(t)-H(t)-\big<H_{x}(t),x^{u}(t)-\bar{x}(t)\big>_{H}-\big<H_{x_{\delta
}}(t),x_{\delta }^{u}(t)-\bar{x}_{\delta }(t)\big>_{H}\\
& \hspace{4em}-H_{y}(t)(y^{u}(t)-\bar{y}%
(t))-\E^{\mathcal F_t}[H_{y_{\delta+}}(t)(y^u_{\delta+}(t)-\bar y_{\delta+}(t) ]\\&\hspace{4em}-\big<H_{z}(t),z^{u}(t)-\bar{z}(t)\big>_{\mathcal L_2^0(K;\R)}-\E^{\mathcal F_t}[\big<H_{z_{\delta+}}(t),z^u_{\delta+}(t)-\bar z_{\delta+}(t)\big>_{\mathcal L_2^0(K;\R)} ] \bigg\}dt\\
=:& A. 
\end{align*}
By the convexity of $H$, 
we know that 
\begin{equation}\label{e:le01}
    A - B \ge 0
\end{equation}
where 
\begin{equation}\label{e:le02}
\begin{aligned}
B= &\mathbb{E}\int_{0}^{T}\bigg\{\big<H_{u}(t),u(t)-\bar{u}(t)\big>%
_{H_{1}}+\big<H_{u_{\delta }}(t),u_{\delta }(t)-\bar{u}_{\delta }(t)\big>_{H_{1}}\bigg\}dt \\
=&\mathbb{E}\int_{0}^{T}\Big<H_{u}(t)+\int_{-\delta }^{0} H_{u_{\delta}}(t-s)m(ds),u(t)-\bar{u}(t)\Big>_{H_{1}}dt\ge 0,
\end{aligned}
\end{equation}
with the nonnegativity following by the assumption \eqref{ne}. Therefore, 
\begin{align*}
y^{u}(0)-\bar{y}(0) = &\,\, -\mathbb{E}[k(T)(h(x^{u}(T), x^u_{\delta, \nu}(T))-h(\bar{x}(T)),\bar x^u_{\delta, \nu}(T))]\\&\hspace{0.5em}+\mathbb{E}\big[k(T)\left\langle h_{x}(T),x^{u}(T)-\bar{x}(T)\right\rangle _{H}+\mathbb{E}\big[k(T)\big< h_{x_{\delta,\nu}}(T),x_{\delta,\nu}^{u}(T)-\bar{x}_{\delta,\nu}(T)\big> _{H}\big]\\
&\quad + (A-B)+B \ge 0,
\end{align*}
where the last step follows from the convexity of $h$  together with the assumption (d)  $K(T)\le 0$, \eqref{e:le01} and \eqref{e:le02}. This shows  $J(u(\cdot ))-J(\bar{u}(\cdot ))\geq 0$  which yields the optimality of $\bar{u}(\cdot )$.
\end{proof}

\section{Some applications} \label{sec:APP}

In this section, we apply our result to  optimal control problem of  parabolic SPDEs wiht delay and the linear quadratic (LQ) problem of SDEEs. 

\subsection{Optimal control problem of SPDEs with delay}

Let $H^{1}$ be the Sobolev space of $W^{1,2}(\mathbb{R}^{d}).$ Set $%
V=H^{1} $ and $H=L^{2}(\mathbb{R}^{d}).$ Consider super-parabolic SPDE with
delays: 
\begin{equation*}
\left\{ 
\begin{aligned}
{d}x(t,\zeta )& =\Big\{\sum_{i,j=1}^{n}\partial _{\zeta _{i}}[\alpha
_{ij}(t,\zeta )\partial _{\zeta _{j}}x(t,\zeta )]+\sum_{i=1}^{n}\tilde{\alpha%
}_{i}(t,\zeta )\partial _{\zeta _{i}}x(t,\zeta ) \\
& +b(t,\zeta ,u(t),u(t-\delta ),x(t,\zeta ),x(t-\delta ,\zeta ))\Big\}{d}t \\
& +\Big\{\sum_{i=1}^{n}\beta _{i}(t,\zeta )\partial _{\zeta _{i}}x(t,\zeta
)+\sigma (t,\zeta ,u(t),u(t-\delta ),x(t,\zeta ),x(t-\delta ,\zeta ))\Big\}{d%
}w(t), \\
& (t,\zeta )\in \lbrack 0,T]\times \mathbb{R}^{d}, \\
x(t,\zeta )& =x_{0}(t,\zeta ),\ u(t,\zeta )=v(t,\zeta ),\quad (t,\zeta )\in
\lbrack -\delta ,0]\times \mathbb{R}^{d}.
\end{aligned}%
\right.
\end{equation*}%
Here $\alpha _{ij},\tilde{\alpha}_{i},\beta _{i},b,\sigma $ and\ $(x_{0},v)$%
\ are given coefficients and initial values, respectively.\ The control $%
u(t) $ is a progressively measurable  process taking values in $U$ which is a convex subset of a
separable Hilbert space $H_{1}$. We consider the problem of minimizing the
cost functional 
\begin{equation*}
J(u(\cdot ))=y(0),
\end{equation*}%
where $y$ is the recursive utility subject to the following BSDE: 
\begin{equation*}
y(t)=\int_{\mathbb{R}^{d}}h(\zeta ,x(T,\zeta ))d\zeta +\int_{t}^{T}\int_{%
\mathbb{R}^{d}}f(s,\zeta ,y(s),z(s),u(s),x(s,\zeta ))d\zeta
ds-\int_{t}^{T}z(s)dw({s}).
\end{equation*}%
We assume the corresponding differentiation and growth conditions on the
coefficients $b,\sigma ,h$ and $f,$ and assume $x_{0}\in S_{\mathcal{F}%
}^{2}(-\delta ,0;H)$ and $v\in L_{\mathcal{F}}^{2}(-\delta ,0;H_{1})$. We take 
\begin{equation*}
A(t)=\sum_{i,j=1}^{n}\partial _{\zeta _{i}}[\alpha _{ij}(t,\zeta )\partial
_{\zeta _{j}}]+\sum_{i=1}^{n}\tilde{\alpha}_{i}(t,\zeta )\partial _{\zeta
_{i}},\quad B(t)=\sum_{i=1}^{n}\beta _{i}(t,\zeta )\partial _{\zeta _{i}},
\end{equation*}
and 
suppose that there exist constants $\alpha \in (0,1)$ and $K>1$ such that 
\begin{equation*}
\alpha I_{d\times d}+(\beta ^{i}\beta ^{j})_{d\times d}\leq 2(\alpha
^{ij})_{d\times d}\leq KI_{d\times d}.
\end{equation*}

In this case, we can check that the conditions $(A2)$ and $(A3)$ hold.
Thus, we obtain by Theorem~\ref{SMP} the maximum condition, and  by Theorem~\ref{sufficient} its sufficiency under proper convex
assumptions.

\subsection{LQ problem for SDEEs}

\label{4.1} Suppose that the control domain is a separable Hilbert space $H_{1}$ and $\mathcal{U}=L_{\mathcal{F}}^{2}(0,T;H_{1})$. In \eqref{state}
and \eqref{y}, let 
\begin{align*}
& b(t,x,x_{\delta },u,u_{\delta })=A_{1}(t)x_{\delta
}+C(t)u+C_{1}(t)u_{\delta }, \\
& \sigma (t,x,x_{\delta },u,u_{\delta })=B_{1}(t)x_{\delta
}+D(t)u+D_{1}(t)u_{\delta }, \\
& f(t,x,x_{\delta },y,z,u,u_{\delta })=\left\langle F(t)x,x\right\rangle
_{H}+{G}_{1}{(t)y+G}_{2}{(t)z}+\left\langle N(t)u,u\right\rangle
_{H_{1}}, \\
& h(x)=\left\langle \Phi x,x\right\rangle _{H},
\end{align*}%
where $A_{1},B_{1},F:[0,T]\times \Omega \rightarrow \mathcal{L}(H)$, $%
C,C_{1},D,D_{1}:[0,T]\times \Omega \rightarrow \mathcal{L}(H_{1},H)$, $%
G_{1},G_{2}:[0,T]\times \Omega \rightarrow \mathbb{R}$, $N:[0,T]\times
\Omega \rightarrow \mathcal{L}(H_{1})$, and $\Phi :\Omega \rightarrow 
\mathcal{L}(H)$.

Then, the controlled system is as follows 
\begin{equation}
\left\{ \begin{aligned}
dx(t)&=\big[A(t)x(t)+A_1(t)x_\delta(t)+C(t)u(t)+C_1(t)u_\delta
(t)\big]dt\\&\hspace{0.5em}+[B(t)x(t)+B_1(t)x_\delta(t)+D(t)u(t)+D_1(t)u_
\delta (t)]dw(t),\,t\in[0,T],\\
x(t)&=x_0(t),\,\,u(t)=v(t),\,\,t\in[-\delta,0], \end{aligned}\right.
\label{lqx}
\end{equation}%
and the recursive utility $y(\cdot )$ is governed by 
\begin{equation}
\left\{ \begin{aligned} dy(t)=&\,\,\Big\{\left\langle
F(t)x(t),x(t)\right\rangle_H+
G_1(t)y(t)+G_2(t)z(t)+\left\langle
N(t)u(t),u(t)\right\rangle_{H_1}\Big\}dt\\&-z(t)dw(t),\,\,\,t\in[0,T],\\
y(T)=&\,\,\left\langle\Phi x(T),x(T)\right\rangle_H. \end{aligned}\right.
\end{equation}
We aim to minimize $J(u(\cdot )):=y(0)$ over $\mathcal{U}$.

Assume the following conditions.

\begin{itemize}
\item[(\textbf{L1})] The operators $A:[0,T]\times \Omega \rightarrow 
\mathcal {L}(V;V^{\ast })$ and $B:[0,T]\times \Omega \rightarrow \mathcal {L}(V;H)$ satisfy the coercivity and boundedness conditions (A2)-(A3).

\item[(\textbf{L2})]  $x_{0}(\cdot )\in S_{\mathbb{F}%
}^{2}(-\delta ,0;H)$ and $v(\cdot )\in L_{\mathbb{F}}^{2}(-\delta
,0;H_{1}) $. The processes $A_{1},B_{1},C,C_{1},D,D_{1},F,G,H,N$ are
uniformly bounded, $A_{1},B_{1},C,C_{1},D,D_{1},F,N$ are weakly {$\mathbb{F}$%
-}adapted (see \cite[Section 2]{liu2021maximum} and \cite[Section 1]%
{du2013maximum} for details) and $
G_{1},G_{2} $ are $\mathbb{F}$-adapted. $\Phi $ is uniformly
bounded and weakly $\mathcal{F}_{T}$-measurable.

\item[(\textbf{L3})]  $F,$ $N$, $\Phi $ are symmetric and
nonnegative definite for a.e. $t\in \lbrack 0,T]$, $\mathbb{P}-$a.s. Furthermore, $N$
is uniformly positive  definite for a.e. $t\in \lbrack 0,T]$, $\mathbb{P}-$a.s.
\end{itemize}

The Hamiltonian becomes 
\begin{align*}
 H(t,x,x_{\delta },y,z,u,u_{\delta },p,q,k) 
& =\left\langle A_{1}(t)x_{\delta }+C(t)u+C_{1}(t)u_{\delta },p\right\rangle
_{H}  +\left\langle B_{1}(t)x_{\delta }+D(t)u+D_{1}(t)u_{\delta },q\right\rangle
_{H} \\
& -\left\langle F(t)x,x\right\rangle _{H}-G_{1}{(t)y-G}_{2}{(t)z%
}-\left\langle N(t)u,u\right\rangle _{H_{1}}.
\end{align*}
Assume that $\bar{u}(\cdot )$ is an optimal control and $\bar{x}(\cdot )$ is
the corresponding solution of equation \eqref{lqx}. Then, the adjoint
equation along $(\bar{x}(\cdot ),\bar{u}(\cdot ))$ is 
\begin{equation}
\left\{ \begin{aligned} -dp(t)=&\bigg\{A^*(t)p(t)+B^*(t)q(t)-2F(t)\bar
x(t)k(t)\\&+\mathbb E^{\mathcal
F_t}\Big[\int_{-\delta}^0A_1^*(t-s)p(t-s)m(ds)+\int_{-
\delta}^0B_1^*(t-s)q(t-s)m(ds)\Big]\bigg\}dt\\&-q(t)dw(t),\,\,\,t\in[0,T],\\
p(T)=&-2\Phi \bar x(T),\,\,p(t)=q(t)=0,\,\,t\in(T,T+\delta], \end{aligned}%
\right.
\end{equation}%
and $k(\cdot )$ satisfies 
\begin{equation}
\left\{ \begin{aligned} dk(t)=&\,\,
G(t)k(t)dt+H(t)k(t)dw(t),\,\,t\in[0,T],\\ k(0)=&\,\,-1. \end{aligned}\right.
\end{equation}%
Now \eqref{ne} in the maximum principle becomes 
\begin{align*}
& \big<C^{\ast }(t)p(t)+D^{\ast }(t)q(t)+\mathbb{E}^{\mathcal{F}_{t}}\Big[\int_{-\delta }^{0}C_{1}^{\ast }(t-s)p(t-s)m(ds) \\
&\quad  +\int_{-\delta }^{0}D_{1}^{\ast }(t-s)q(t-s)m(ds)\Big]-2N(t)\bar{u}(t),u-%
\bar{u}(t)\big>_{H_{1}}=0.
\end{align*}%
From this we can deduce that 
\begin{equation*}
C^{\ast }(t)p(t)+D^{\ast }(t)q(t)+\mathbb{E}^{\mathcal{F}_{t}}\Big[\int_{-\delta }^{0}C_{1}^{\ast }(t-s)p(t-s)m(ds)+\int_{-\delta
}^{0}D_{1}^{\ast }(t-s)q(t-s)m(ds)\Big]-2N(t)\bar{u}(t)=0,
\end{equation*}%
and thus 
\begin{align}
\bar{u}(t)=-& \frac{1}{2}N^{-1}\bigg\{C^{\ast }(t)p(t)+D^{\ast }(t)q(t)+%
\mathbb{E}^{\mathcal{F}_{t}}\Big[\int_{-\delta }^{0}C_{1}^{\ast
}(t-s)p(t-s)m(ds)  \notag \\
& \hspace{4em}+\int_{-\delta }^{0}D_{1}^{\ast }(t-s)q(t-s)m(ds)\Big]\bigg\}.
\end{align}%
By Theorem \ref{sufficient}, we can conclude that $\bar{u}(\cdot )$ defined
above is indeed an optimal control of the LQ problem.

\appendix

\section{Proof of Theorem \ref{exu} } \label{se:Thm3.2}

Consider the following subspace of $L_{\mathbb{F}}^{2}(-\delta ,T;V)$
(still equipped with the norm on $L_{\mathbb{F}}^{2}(-\delta ,T;V))$: 
\begin{equation*}
\mathcal{H}:=\left\{ y:y\in L_{\mathbb{F}}^{2}(-\delta ,T;V)\text{ such that}\ y(t)=x_{0}(t),\,t\in \lbrack -\delta ,0]\right\} .
\end{equation*}%
We introduce an equivalent norm on $\mathcal H$: for some undetermined $\beta ,\gamma
>0$, 
\begin{equation*}
\Vert y\Vert _{\beta ,\gamma }^{2}:=\int_{-\delta }^{0}\Vert y(s)\Vert
_{V}^{2}ds+\mathbb{E}\left[ \int_{0}^{T}e^{-\beta t}\Vert y(t)\Vert
_{H}^{2}dt+\gamma \int_{0}^{T}e^{-\beta t}\Vert y(t)\Vert _{V}^{2}dt\right] .
\end{equation*}

Given $X(\cdot )\in \mathcal{H}$, we consider the following equation 
\begin{equation}\label{x}
\left\{ 
\begin{aligned}
dx(t)=& [A(t)x(t)+b(t,x(t),X_{\delta }(t))]dt \\
& +[B(t)x(t)+\sigma (t,x(t),X_{\delta }(t))]dw(t),\ t\in \lbrack 0,T], \\
x(t)=& x_{0}(t),\text{ }t\in \lbrack -\delta ,0].
\end{aligned}%
\right.  
\end{equation}%
According to \cite{79k,LR15}, under (A1)-(A4), \eqref{x} has a unique
solution $x(\cdot )\in \mathcal{H}$. Define the mapping $\mathbb{I}$ from $%
\mathcal{H}$ to itself by $\mathbb{I}(X(\cdot ))=x(\cdot )$. Given any $%
X(\cdot ),X^{\prime }(\cdot )\in \mathcal{H}$, we denote $x(\cdot )=\mathbb{I%
}(X(\cdot )),x^{\prime }(\cdot )=\mathbb{I}(X^{\prime }(\cdot ))$. For $t\in
\lbrack -\delta ,T]$, we define 
\begin{equation*}
\hat{X}(t)=X(t)-X^{\prime }(t),\,\,\hat{x}(t)=x(t)-x^{\prime }(t).
\end{equation*}%
Then $\hat{x}(t)$ satisfies 
\begin{equation*}
\left\{ 
\begin{aligned}
d\hat{x}(t)=& [A(t)\hat{x}(t)+b(t,x(t),X_{\delta }(t))-b(t,x^{\prime
}(t),X_{\delta }^{\prime }(t))]dt \\
& +[B(t)\hat{x}(t)+\sigma (t,x(t),X_{\delta }(t))-\sigma (t,x^{\prime
}(t),X_{\delta }^{\prime }(t))]dw(t), \\
\hat{x}(t)=& 0,\,t\in \lbrack -\delta ,0].
\end{aligned}%
\right.
\end{equation*}%
For $\beta >0$, applying It\^{o}'s formula to $e^{-\beta t}\Vert \hat{x}%
(t)\Vert _{H}^{2}$ on $[0,T]$, we obtain 
\begin{align*}
& e^{-\beta T}\mathbb{E}[\Vert \hat{x}(T)\Vert _{H}^{2}]+\beta \mathbb{E}%
\int_{0}^{T}e^{-\beta t}\Vert \hat{x}(t)\Vert _{H}^{2}dt \\
& =2\mathbb{E}\int_{0}^{T}e^{-\beta t}\left\langle A(t)\hat{x}(t),\hat{x}%
(t)\right\rangle _{\ast }dt \\
& \hspace{1em}+2\mathbb{E}\int_{0}^{T}e^{-\beta t}\left\langle
b(t,x(t),X_{\delta }(t))-b(t,x^{\prime }(t),X_{\delta }^{\prime }(t)),\hat{x}%
(t)\right\rangle _{H}dt \\
& \hspace{1em}+\mathbb{E}\int_{0}^{T}e^{-\beta t}\Vert B(t)\hat{x}(t)+\sigma
(t,x(t),X_{\delta }(t))-\sigma (t,x^{\prime }(t),X_{\delta }^{\prime
}(t))\Vert _{\mathcal{L}_{2}^{0}}^{2}dt.
\end{align*}%

Then by the coercivity condition (A2), we can get  that
\begin{align*}
& e^{-\beta T}\mathbb{E}[\Vert \hat{x}(T)\Vert _{H}^{2}]+\beta \mathbb{E}%
\int_{0}^{T}e^{-\beta t}\Vert \hat{x}(t)\Vert _{H}^{2}dt \\
& =\mathbb{E}\int_{0}^{T}e^{-\beta t}[2\left\langle A(t)\hat{x}(t),\hat{x}%
(t)\right\rangle _{\ast }+\Vert B(t)\hat{x}(t)\Vert _{H}^{2}]dt \\
& \hspace{1em}+2\mathbb{E}\int_{0}^{T}e^{-\beta t}\left\langle B(t)\hat{x}%
(t),\sigma (t,x(t),X_{\delta }(t))-\sigma (t,x^{\prime }(t),X_{\delta
}^{\prime }(t))\right\rangle dt \\
& \hspace{1em}+2\mathbb{E}\int_{0}^{T}e^{-\beta t}\left\langle
b(t,x(t),X_{\delta }(t))-b(t,x^{\prime }(t),X_{\delta }^{\prime }(t)),\hat{x}%
(t)\right\rangle _{H}dt \\
& \hspace{1em}+\mathbb{E}\int_{0}^{T}e^{-\beta t}\Vert \sigma
(t,x(t),X_{\delta }(t))-\sigma (t,x^{\prime }(t),X_{\delta }^{\prime
}(t))\Vert _{\mathcal{L}_{2}^{0}}^{2}dt \\
& \leq \mathbb{E}\int_{0}^{T}e^{-\beta t}[-\alpha \Vert \hat{x}(t)\Vert
_{V}^{2}+\lambda \Vert \hat{x}(t)\Vert _{H}^{2}]dt \\
& \hspace{1em}+\frac{\alpha }{2}\mathbb{E}\int_{0}^{T}e^{-\beta t}\Vert \hat{%
x}(t)\Vert _{V}^{2}dt+\mathbb{E}\int_{0}^{T}e^{-\beta t}\Vert \hat{x}%
(t)\Vert _{H}^{2}dt \\
& \hspace{1em}+\mathbb{E}\int_{0}^{T}e^{-\beta t}\Vert b(t,x(t),X_{\delta
}(t))-b(t,x^{\prime }(t),X_{\delta }^{\prime }(t))\Vert _{H}^{2}dt \\
& \hspace{1em}+(1+\frac{2(C_{1})^{2}}{\alpha })\mathbb{E}\int_{0}^{T}e^{-%
\beta t}\Vert \sigma (t,x(t),X_{\delta }(t))-\sigma (t,x^{\prime
}(t),X_{\delta }^{\prime }(t))\Vert _{H}^{2}dt,
\end{align*}
where the constant $C_{1}$ is from  (\ref{Myeq2-41}). Thus, we get
\begin{align*}
& \beta \mathbb{E}\int_{0}^{T}e^{-\beta t}\Vert \hat{x}(t)\Vert _{H}^{2}dt+%
\frac{\alpha }{2}\mathbb{E}\int_{0}^{T}e^{-\beta t}\Vert \hat{x}(t)\Vert
_{V}^{2}dt \\
& \leq (1+\lambda )\mathbb{E}\int_{0}^{T}e^{-\beta t}\Vert \hat{x}(t)\Vert
_{H}^{2}dt+\mathbb{E}\int_{0}^{T}e^{-\beta t}\Vert b(t,x(t),X_{\delta
}(t))-b(t,x^{\prime }(t),X_{\delta }^{\prime }(t))\Vert _{H}^{2}dt \\
& \hspace{1em}+(1+\frac{2(C_{1})^{2}}{\alpha })\mathbb{E}\int_{0}^{T}e^{-%
\beta t}\Vert \sigma (t,x(t),X_{\delta }(t))-\sigma (t,x^{\prime
}(t),X_{\delta }^{\prime }(t))\Vert _{\mathcal{L}_{2}^{0}}^{2}dt.
\end{align*}%
Then by the Lipschitz condition (A4), we have 
\begin{align*}
& \big(\beta -\lambda -1-2(1+\tfrac{(C_{1})^{2}}{\alpha })K\big)\mathbb{E}%
\int_{0}^{T}e^{-\beta t}\Vert \hat{x}(t)\Vert _{H}^{2}dt+\frac{\alpha }{2}%
\mathbb{E}\int_{0}^{T}e^{-\beta t}\Vert \hat{x}(t)\Vert _{V}^{2}dt \\
& \hspace{2em}\leq 2(1+\tfrac{(C_{1})^{2}}{\alpha })K\mathbb{E}%
\int_{0}^{T}e^{-\beta t}\Vert \hat{X}_{\delta }(t)\Vert _{H}^{2}dt.
\end{align*}%
By Lemma \ref{Le3-2} and the fact that $\hat{X}(t)=0$ for $t\in \lbrack 
-\delta ,0)$, we have 
\begin{equation*}
\mathbb{E}\int_{0}^{T}e^{-\beta t}\Vert \hat{X}_{\delta }(t)\Vert
_{H}^{2}dt\leq m^{2}([-\delta ,0])\mathbb{E}\int_{-\delta }^{T}e^{-\beta
t}\Vert \hat{X}(t)\Vert _{H}^{2}dt=m^{2}([-\delta ,0])\mathbb{E}%
\int_{0}^{T}e^{-\beta t}\Vert \hat{X}(t)\Vert _{H}^{2}dt.
\end{equation*}%
Combining the above two inequalities, we get
\begin{equation}\label{e:est-SDEE}
\begin{aligned}
& \big(\beta -\lambda -1-2(1+\tfrac{(C_{1})^{2}}{\alpha })K\big)\mathbb{E}%
\int_{0}^{T}e^{-\beta t}\Vert \hat{x}(t)\Vert _{H}^{2}dt+\frac{\alpha }{2}%
\mathbb{E}\int_{0}^{T}e^{-\beta t}\Vert \hat{x}(t)\Vert _{V}^{2}dt \\
& \hspace{2em}\leq 2(1+\tfrac{(C_{1})^{2}}{\alpha })Km^2([-\delta, 0])\mathbb{E}
\int_{0}^{T}e^{-\beta t}\Vert \hat{X}(t)\Vert _{H}^{2}dt.
\end{aligned}
\end{equation}

We may assume $m([-\delta, 0])>0$, since if otherwise $m([-\delta, 0])=0$, we have  $X_\delta=X'_\delta\equiv 0$ and hence \eqref{x} reduces to the classical SEE without delay. Now, if we choose
\begin{equation*}
\beta =\lambda +1+2(1+\tfrac{(C_{1})^{2}}{\alpha })K+4(1+\tfrac{C_{1}}{%
\alpha })Km^{2}([-\delta ,0]),
\end{equation*}%
by \eqref{e:est-SDEE} we deduce
\begin{align*}
& \mathbb{E}\int_{0}^{T}e^{-\beta t}\Vert \hat{x}(t)\Vert _{H}^{2}dt+\frac{%
\alpha }{8(1+\tfrac{(C_{1})^{2}}{\alpha })Km^{2}([-\delta ,0])}\mathbb{E}%
\int_{0}^{T}e^{-\beta t}\Vert \hat{x}(t)\Vert _{V}^{2}dt\leq \frac{1}{2}%
\mathbb{E}\int_{0}^{T}e^{-\beta t}\Vert \hat{X}(t)\Vert _{H}^{2}dt \\
& \leq \frac{1}{2}\bigg( \mathbb{E}\int_{0}^{T}e^{-\beta t}\Vert \hat{X}%
(t)\Vert _{H}^{2}dt+\frac{\alpha }{8(1+\tfrac{(C_{1})^{2}}{\alpha }%
)Km^{2}([-\delta ,0])}\mathbb{E}\int_{0}^{T}e^{-\beta t}\Vert \hat{X}%
(t)\Vert _{V}^{2}dt\bigg),
\end{align*}
that is, \[\Vert \hat x \Vert
_{\beta ,\gamma }^{2} = \Vert \mathbb I(X) - \mathbb I (X')\Vert
_{\beta ,\gamma }^{2} \le \frac12 \Vert X-X'\Vert
_{\beta ,\gamma }^{2}=\frac12 \Vert \hat X\Vert
_{\beta ,\gamma }^{2}\] with $\gamma =\frac{\alpha }{8(1+\tfrac{(C_{1})^{2}}{\alpha })Km^{2}([-\delta ,0])}.$
Thus $\mathbb{I}$ is a contraction mapping on $\mathcal H$ with the norm $\Vert \cdot \Vert
_{\beta ,\gamma }$
and hence by the contraction mapping theorem, there exists a unique solution in $\mathcal{H}$ to equation \eqref{sdee}. This completes the proof.

\noindent\textbf{Acknowledgments.} We wish to thank 
 Ying Hu, Wei Liu, and Fengyu Wang for helpful discussions and comments. G. Liu's Research is partially supported by National Natural Science Foundation of
China (No. 12201315) and the Fundamental Research Funds for the
Central Universities, Nankai University (No. 63221036). J. Song is partially supported by National Natural Science Foundation of China (nos. 12071256 and 12226001) and Major Basic Research Program of the Natural Science Foundation of Shandong Province in China (nos. ZR2019ZD42 and ZR2020ZD24).


\begin{thebibliography}{10}
	
	\bibitem{bensoussan2006lectures}
	A~Bensoussan.
	\newblock Lectures on stochastic control.
	\newblock In {\em Nonlinear Filtering and Stochastic Control: Proceedings of
		the 3rd 1981 Session of the Centro Internazionale Matematico Estivo (CIME),
		Held at Cortona, July 1--10, 1981}, pages 1--62. Springer, 2006.
	
	\bibitem{TGR02}
	Tomás Caraballo, María~J. Garrido-Atienza, and José Real.
	\newblock Existence and uniqueness of solutions for delay stochastic evolution
	equations.
	\newblock {\em Stochastic Analysis and Applications}, 20(6):1225--1256, 2002.
	
	\bibitem{10cw}
	Li~Chen and Zhen Wu.
	\newblock Maximum principle for the stochastic optimal control problem with
	delay and application.
	\newblock {\em Automatica J. IFAC}, 46(6):1074--1080, 2010.
	
	\bibitem{DZ92}
	Giuseppe Da~Prato and Jerzy Zabczyk.
	\newblock {\em Stochastic equations in infinite dimensions}, volume~44 of {\em
		Encyclopedia of Mathematics and its Applications}.
	\newblock Cambridge University Press, Cambridge, 1992.
	
	\bibitem{DZL21}
	Haoran Dai, Jianjun Zhou, and Han Li.
	\newblock {Infinite Horizon Stochastic Maximum Principle for Stochastic Delay
		Evolution Equations in Hilbert Spaces}.
	\newblock {\em Bulletin of the Malaysian Mathematical Sciences Society},
	44(1):3229 -- 3528, 2021.
	
	\bibitem{10dm}
	Kai Du and Qingxin Meng.
	\newblock A revisit to {$W^n_2$}-theory of super-parabolic backward stochastic
	partial differential equations in {$\mathbb R^d$}.
	\newblock {\em Stochastic Process. Appl.}, 120(10):1996--2015, 2010.
	
	\bibitem{du2013maximum}
	Kai Du and Qingxin Meng.
	\newblock A maximum principle for optimal control of stochastic evolution
	equations.
	\newblock {\em SIAM Journal on Control and Optimization}, 51(6):4343--4362,
	2013.
	
	\bibitem{duffie1992stochastic}
	Darrell Duffie and Larry~G Epstein.
	\newblock Stochastic differential utility.
	\newblock {\em Econometrica: Journal of the Econometric Society}, pages
	353--394, 1992.
	
	\bibitem{el1997backward}
	Nicole El~Karoui, Shige Peng, and Marie~Claire Quenez.
	\newblock Backward stochastic differential equations in finance.
	\newblock {\em Mathematical finance}, 7(1):1--71, 1997.
	
	\bibitem{fuhrman2013stochastic}
	Marco Fuhrman, Ying Hu, and Gianmario Tessitore.
	\newblock Stochastic maximum principle for optimal control of {SPDEs}.
	\newblock {\em Applied Mathematics \& Optimization}, 68:181--217, 2013.
	
	\bibitem{FMT10}
	Marco Fuhrman, Federica Masiero, and Gianmario Tessitore.
	\newblock Stochastic equations with delay: optimal control via {BSDE}s and
	regular solutions of {Hamilton–Jacobi–Bellman} equations.
	\newblock {\em SIAM Journal on Control and Optimization}, 48(7):4624--4651,
	2010.
	
	\bibitem{guatteri2021stochastic}
	Giuseppina Guatteri and Federica Masiero.
	\newblock Stochastic maximum principle for problems with delay with dependence
	on the past through general measures.
	\newblock {\em Mathematical Control and Related Fields}, 11(4):829--855, 2021.
	
	\bibitem{GM23}
	Giuseppina Guatteri and Federica Masiero.
	\newblock Stochastic maximum principle for equations with delay: going to
	infinite dimensions to solve the non-convex case.
	\newblock {\em arXiv preprint arXiv:2306.07422}, 2023.
	
	\bibitem{guatteri2017stochastic}
	Giuseppina Guatteri, Federica Masiero, and Carlo Orrieri.
	\newblock Stochastic maximum principle for {SPDEs} with delay.
	\newblock {\em Stochastic Processes and their Applications}, 127(7):2396--2427,
	2017.
	
	\bibitem{82gk}
	Istv{\'a}n Gy{\"o}ngy and Nicolai~V Krylov.
	\newblock On stochastics equations with respect to semimartingales ii. it{\^o}
	formula in banach spaces.
	\newblock {\em Stochastics}, 6(3-4):153--173, 1982.
	
	\bibitem{hu1990maximum}
	Ying Hu and Shige Peng.
	\newblock Maximum principle for semilinear stochastic evolution control
	systems.
	\newblock {\em Stochastics and Stochastic Reports}, 33(3-4):159--180, 1990.
	
	\bibitem{hu1996maximum}
	Ying Hu and Shige Peng.
	\newblock Maximum principle for optimal control of stochastic system of
	functional type.
	\newblock {\em Stochastic Analysis and Applications}, 14(3):283--301, 1996.
	
	\bibitem{HS12}
	Jianhui Huang and Jingtao Shi.
	\newblock Maximum principle for optimal control of fully coupled
	forward-backward stochastic differential delayed equations.
	\newblock {\em ESAIM Control Optim. Calc. Var.}, 18(4):1073--1096, 2012.
	
	\bibitem{79k}
	N.~V. Krylov and B.~L. Rozovski\u{\i}.
	\newblock Stochastic evolution equations.
	\newblock In {\em Current problems in mathematics, {V}ol. 14 ({R}ussian)},
	pages 71--147, 256. Akad. Nauk SSSR, Vsesoyuz. Inst. Nauchn. i Tekhn.
	Informatsii, Moscow, 1979.
	
	\bibitem{20lww}
	Na~Li, Guangchen Wang, and Zhen Wu.
	\newblock Linear-quadratic optimal control for time-delay stochastic system
	with recursive utility under full and partial information.
	\newblock {\em Automatica J. IFAC}, 121:109169, 9, 2020.
	
	\bibitem{liu2021maximum}
	Guomin Liu and Shanjian Tang.
	\newblock Maximum principle for optimal control of stochastic evolution
	equations with recursive utilities.
	\newblock {\em SIAM Journal on Control and Optimization}, 61(6):3467--3500,
	2023.
	
	\bibitem{LR15}
	Wei Liu and Michael R\"{o}ckner.
	\newblock {\em Stochastic partial differential equations: an introduction}.
	\newblock Universitext. Springer, Cham, 2015.
	
	\bibitem{lu2014general}
	Qi~L{\"u} and Xu~Zhang.
	\newblock {\em General Pontryagin-type stochastic maximum principle and
		backward stochastic evolution equations in infinite dimensions}.
	\newblock Springer, 2014.
	
	\bibitem{16ms}
	Qingxin Meng and Yang Shen.
	\newblock Optimal control for stochastic delay evolution equations.
	\newblock {\em Appl. Math. Optim.}, 74(1):53--89, 2016.
	
	\bibitem{MSWZ23}
	Weijun Meng, Jingtao Shi, Tianxiao Wang, and Ji-Feng Zhang.
	\newblock A general maximum principle for optimal control of stochastic
	differential delay systems.
	\newblock {\em arXiv preprint arXiv:2302.03339}, 2023.
	
	\bibitem{11osz}
	Bernt {\O}ksendal, Agnes Sulem, and Tusheng Zhang.
	\newblock Optimal control of stochastic delay equations and time-advanced
	backward stochastic differential equations.
	\newblock {\em Advances in Applied Probability}, 43(2):572--596, 2011.
	
	\bibitem{oksendal2012optimal}
	Bernt {\O}ksendal, Agnes Sulem, and Tusheng Zhang.
	\newblock Optimal partial information control of {SPDEs} with delay and
	time-advanced backward {SPDEs}.
	\newblock In {\em Stochastic Analysis and Applications to Finance: Essays in
		Honour of Jia-an Yan}, pages 355--383. World Scientific, 2012.
	
	\bibitem{peng1990general}
	Shige Peng.
	\newblock A general stochastic maximum principle for optimal control problems.
	\newblock {\em SIAM Journal on control and optimization}, 28(4):966--979, 1990.
	
	\bibitem{peng1993backward}
	Shige Peng.
	\newblock Backward stochastic differential equations and applications to
	optimal control.
	\newblock {\em Applied Mathematics and Optimization}, 27(2):125--144, 1993.
	
	\bibitem{pw99}
	Shige Peng and Zhen Wu.
	\newblock Fully coupled forward-backward stochastic differential equations and
	applications to optimal control.
	\newblock {\em SIAM Journal on Control and Optimization}, 37(3):825--843, 1999.
	
	\bibitem{10py}
	Shige Peng and Zhe Yang.
	\newblock Anticipated backward stochastic differential equations.
	\newblock {\em Ann. Probab.}, 37(3):877--902, 2009.
	
	\bibitem{PR07}
	Claudia Pr\'{e}v\^{o}t and Michael R\"{o}ckner.
	\newblock {\em A concise course on stochastic partial differential equations},
	volume 1905 of {\em Lecture Notes in Mathematics}.
	\newblock Springer, Berlin, 2007.
	
	\bibitem{14sms}
	Yang Shen, Qingxin Meng, and Peng Shi.
	\newblock Maximum principle for mean-field jump-diffusion stochastic delay
	differential equations and its application to finance.
	\newblock {\em Automatica J. IFAC}, 50(6):1565--1579, 2014.
	
	\bibitem{SongWang21}
	Jian Song and Meng Wang.
	\newblock Stochastic maximum principle for systems driven by local martingales
	with spatial parameters.
	\newblock {\em Probab. Uncertain. Quant. Risk}, 6(3):213--236, 2021.
	
	\bibitem{SW21}
	Wilhelm Stannat and Lukas Wessels.
	\newblock Peng's maximum principle for stochastic partial differential
	equations.
	\newblock {\em SIAM J. Control Optim.}, 59(5):3552--3573, 2021.
	
	\bibitem{doi:10.1137/S0363012996313100}
	Shanjian Tang.
	\newblock The maximum principle for partially observed optimal control of
	stochastic differential equations.
	\newblock {\em SIAM Journal on Control and Optimization}, 36(5):1596--1617,
	1998.
	
	\bibitem{WU20131473}
	Zhen Wu.
	\newblock A general maximum principle for optimal control of forward–backward
	stochastic systems.
	\newblock {\em Automatica}, 49(5):1473--1480, 2013.
	
	\bibitem{ymy08}
	Zhe Yang, Xuerong Mao, and Chenggui Yuan.
	\newblock Comparison theorem of one-dimensional stochastic hybrid delay
	systems.
	\newblock {\em Systems \& Control Letters}, 57(1):56--63, 2008.
	
	\bibitem{Yong2010optimality}
	Jiongmin Yong.
	\newblock Optimality variational principle for controlled forward-backward
	stochastic differential equations with mixed initial-terminal conditions.
	\newblock {\em SIAM Journal on Control and Optimization}, 48(6):4119--4156,
	2010.
	
	\bibitem{YU20122420}
	Zhiyong Yu.
	\newblock The stochastic maximum principle for optimal control problems of
	delay systems involving continuous and impulse controls.
	\newblock {\em Automatica}, 48(10):2420--2432, 2012.
	
	\bibitem{ZX17}
	Huanshui Zhang and Juanjuan Xu.
	\newblock Control for {I}tô stochastic systems with input delay.
	\newblock {\em IEEE Transactions on Automatic Control}, 62(1):350--365, 2017.
	
\end{thebibliography}
\end{document}